\title{Examples around the strong Viterbo conjecture}
\author{Jean Gutt\footnote{{\bf{Universit\'e Toulouse III - Paul Sabatier}}, 118 route de Narbonne, 31062 Toulouse Cedex 9, France \& {\bf{Institut National Universitaire Champollion}}, Place de Verdun, 81012 Albi, France} , Michael Hutchings\footnote{{\bf University of California, Berkeley}, partially supported by NSF grant DMS-2005437} \,\,\& Vinicius G. B. Ramos\footnote{{\bf{Instituto de Matem\'atica Pura e Aplicada}}, Estrada Dona Castorina, 110, Rio de Janeiro - RJ - Brasil, 22460-320, partially supported by grants from the Serrapilheira Institute, FAPERJ and CNPq}}
\newcommand{\mc}[1]{{\mathcal #1}}
\numberwithin{equation}{section}
\newtheorem{theorem}{Theorem}[section]
\newtheorem{proposition}[theorem]{Proposition}
\newtheorem{corollary}[theorem]{Corollary}
\newtheorem{lemma}[theorem]{Lemma}
\newtheorem{lemma-definition}[theorem]{Lemma-Definition}
\newtheorem{conjecture}[theorem]{Conjecture}
\theoremstyle{definition}
\newtheorem{definition}[theorem]{Definition}
\newtheorem{remark}[theorem]{Remark}
\newtheorem{example}[theorem]{Example}
\newcommand{\C}{{\mathbb C}}
\newcommand{\R}{{\mathbb R}}
\newcommand{\Z}{{\mathbb Z}}
\newcommand{\op}{\operatorname}
\newcommand{\bpm}{\begin{pmatrix}}
\newcommand{\epm}{\end{pmatrix}}
\renewcommand{\epsilon}{\varepsilon}
\newcommand{\Hi}{\mathcal{H}}
\newcommand{\Ac}{\mathcal{A}}
\newcommand{\supp}{\op{supp}}
\begin{document}

\maketitle

\begin{abstract}
A strong version of a conjecture of Viterbo asserts that all normalized symplectic capacities agree on convex domains. We review known results showing that certain specific normalized symplectic capacities agree on convex domains. We also review why all normalized symplectic capacities agree on $S^1$-invariant convex domains. We introduce a new class of examples called ``monotone toric domains'', which are not necessarily convex, and which include all dynamically convex toric domains in four dimensions. We prove that for monotone toric domains in four dimensions, all normalized symplectic capacities agree. For monotone toric domains in arbitrary dimension, we prove that the Gromov width agrees with the first equivariant capacity. We also study a family of examples of non-monotone toric domains and determine when the conclusion of the strong Viterbo conjecture holds for these examples. Along the way we compute the cylindrical capacity of a large class of ``weakly convex toric domains'' in four dimensions.
\end{abstract}

\section{Introduction}

If $X$ and $X'$ are domains\footnote{In this paper, a ``domain'' is the closure of an open set. One can of course also consider domains in other symplectic manifolds, but we will not do so here.} in $\R^{2n}=\C^n$, a {\em symplectic embedding\/} from $X$ to $X'$ is a smooth embedding  $\varphi:X\hookrightarrow X'$ such that $\varphi^{\star}\omega=\omega$, where $\omega$ denotes the standard symplectic form on $\R^{2n}$. If there exists a symplectic embedding from $X$ to $X'$, we write $X \underset{s}{\hookrightarrow} X'$.

An important problem in symplectic geometry is to determine when symplectic embeddings exist, and more generally to classify the symplectic embeddings between two given domains. Modern work on this topic began with the Gromov nonsqueezing theorem \cite{Gromov}, which asserts that the ball
\[
B^{2n}(r)=\left\{z\in\C^{n}\;\big|\; \pi|z|^2\le r\right\}
\]
symplectically embeds into the cylinder
\[
Z^{2n}(R) = \left\{z\in\C^n\;\big|\;\pi|z_1|^2\le R\right\}
\]
if and only if $r\leq R$. Many questions about symplectic embeddings remain open, even for simple examples such as ellipsoids and polydisks.

If there exists a symplectic embedding $X \underset{s}{\hookrightarrow} X'$, then we have the volume constraint $\op{Vol}(X)\le \op{Vol}(X')$. To obtain more nontrivial obstructions to the existence of symplectic embeddings, one often uses various symplectic capacities. Definitions of the latter term vary; here we define a \emph{symplectic capacity} to be a function $c$ which assigns to each domain in $\R^{2n}$, possibly in some restricted class, a number $c(X)\in[0,\infty]$, satisfying the following axioms:
\begin{description}
	\item{(Monotonicity)} If $X$ and $X'$ are domains in $\R^{2n}$, and if there exists a symplectic embedding $X \underset{s}{\hookrightarrow} X'$, then $c(X) \le c(X')$.
	\item{(Conformality)} If $r$ is a positive real number then $c(rX) = r^2 c(X)$.	
\end{description}
We say that a symplectic capacity $c$ is {\em normalized\/} if it is defined at least for convex domains and satisfies
\[
c\big(B^{2n}(1)\big)=c\big(Z^{2n}(1)\big)=1.
\]

The first example of a normalized symplectic capacity is the \emph{Gromov width} defined by
\[
	c_{\op{Gr}}(X)=\sup\left\{r\;\bigg|\;B^{2n}(r) \underset{s}{\hookrightarrow} X\right\}.
\]
This trivially satisfies all of the axioms except for the normalization requirement $c_{\op{Gr}}(Z^{2n}(1))$, which holds by Gromov non-squeezing. A similar example is the {\em cylindrical capacity\/} defined by
\[
c_Z(X) = \inf\left\{R\;\bigg|\; X \underset{s}{\hookrightarrow} Z^{2n}(R)\right\}.
\]

Additional examples of normalized symplectic capacities are the Hofer-Zehnder capacity $c_{\op{HZ}}$ defined in \cite{HZ} and the Viterbo capacity $c_{\op{SH}}$ defined in \cite{V}. There are also useful families of symplectic capacities parametrized by a positive integer $k$, including the Ekeland-Hofer capacities $c_k^{\op{EH}}$ defined in \cite{EH,EH2} using calculus of variations; the ``equivariant capacities'' $c_k^{\op{CH}}$ defined in \cite{GuH} using positive equivariant symplectic homology; and in the four-dimensional case, the ECH capacities $c_k^{\op{ECH}}$ defined in \cite{qech} using embedded contact homology. For each of these families, the $k=1$ capacities $c_1^{\op{EH}}$, $c_1^{\op{CH}}$, and $c_1^{\op{ECH}}$ are normalized. Some additional symplectic capacities defined using rational symplectic field theory were recently introduced in \cite{siegel1,siegel2}. For more about symplectic capacities in general we refer to \cite{chls, schlenk} and the references therein.

The goal of this paper is to discuss some results and examples related to the following conjecture, which apparently has been folkore since the 1990s.

\begin{conjecture}[strong Viterbo conjecture]
\label{conj:V}
If $X$ is a convex domain in $\R^{2n}$, then all normalized symplectic capacities of $X$ are equal.
\end{conjecture}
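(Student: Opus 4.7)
The plan is to reduce Conjecture~\ref{conj:V} to a single inequality between two extremal normalized capacities and then to attack that inequality via an intermediate, dynamically defined capacity. For every normalized symplectic capacity $c$ and every convex $X\subset\R^{2n}$, monotonicity applied to a maximal inscribed ball $B^{2n}(r)\subset X$ and a minimal circumscribed cylinder $X\subset Z^{2n}(R)$ yields
\[
c_{\op{Gr}}(X)\;\le\; c(X)\;\le\; c_Z(X).
\]
Hence the conjecture is equivalent to the single equality $c_{\op{Gr}}(X) = c_Z(X)$ for all convex $X$.

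To prove that equality, I would interpose the Hofer--Zehnder capacity $c_{\op{HZ}}$, which on smooth convex domains is known to equal the minimum action $A_{\min}$ of a closed characteristic on the boundary (and agrees there with $c_1^{\op{EH}}$ and, by results of Hermann and Irie, with $c_{\op{SH}}$). The strategy then splits in two. For the upper half $c_Z(X)\le A_{\min}$, fix an action-minimizing characteristic $\gamma$, choose symplectic coordinates $(z_1,\dots,z_n)$ in which $\gamma$ lies in or very nearly in the $z_1$-plane, and try to foliate $X$ by symplectic $2$-disks transverse to that plane whose individual areas are controlled by $A_{\min}$ via convexity; this would produce an explicit symplectic embedding $X\hookrightarrow Z^{2n}(A_{\min})$. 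For the lower half $c_{\op{Gr}}(X)\ge A_{\min}$, the plan is to build a Weinstein tubular neighborhood of $\gamma$ of total transverse action $A_{\min}-\epsilon$ and extend it to an embedded symplectic ball, using the transverse geometry of $\partial X$ near $\gamma$ to keep the packing essentially tight.

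The hard part will be the lower bound $c_{\op{Gr}}\ge c_{\op{HZ}}$ on convex domains: this is already an open problem in its own right, and even for explicit examples we have no general procedure for constructing near-optimal symplectic ball embeddings into a convex $X$. A realistic intermediate target is therefore to first treat convex $X$ that are dynamically convex and whose action-minimizing orbit has an elliptic linearized Poincar\'e return map satisfying a nonresonance condition; in that setting Birkhoff normal forms around $\gamma$ supply the extra rigidity needed to manufacture an embedded symplectic ball of capacity essentially $A_{\min}$. The $S^1$-invariant case and the monotone toric case in four dimensions announced in the abstract should then appear as tractable special cases, where either the symmetry or the moment-map combinatorics make the relevant normal-form computations global and explicit, and where the sandwich $c_{\op{Gr}}\le c\le c_Z$ can be collapsed without appeal to the general convex lower bound.
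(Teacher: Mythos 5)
The statement you are addressing is Conjecture~\ref{conj:V}, which the paper does not prove: it is an open conjecture, and the paper only establishes special cases ($S^1$-invariant convex domains, monotone toric domains in $\R^4$) and partial equalities among particular capacities. Your opening reduction --- that the conjecture is equivalent to $c_{\op{Gr}}(X)=c_Z(X)$ for all convex $X$ --- is correct and is exactly Lemma~\ref{lem:restatement}, and your interposition of $c_{\op{HZ}}=c_1^{\op{EH}}=c_{\op{SH}}=A_{\min}$ on smooth convex domains is Theorem~\ref{thm:equivalence} (due to Ekeland--Hofer, Hofer--Zehnder, Irie, and Abbondandolo--Kang, not Hermann). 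But everything after that point is a description of a strategy rather than a proof, and both halves of the strategy are genuinely open problems.

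Concretely: for the upper half, foliating $X$ by symplectic $2$-disks each of area at most $A_{\min}$ does not by itself produce a symplectic embedding $X\hookrightarrow Z^{2n}(A_{\min})$; you would need those disks to be the fibers of a symplectic fibration over a base disk of area $A_{\min}$, together with a compatible trivialization, and no such construction is known for general convex domains. For the lower half, a Weinstein tubular neighborhood of a closed characteristic $\gamma$ is modelled on a neighborhood of a circle, not of a point, and there is no known mechanism converting ``transverse action $A_{\min}-\epsilon$'' into an embedded ball of Gromov width $A_{\min}-\epsilon$. Moreover Remark~\ref{rem:abhs} shows that for merely dynamically convex domains in $\R^4$ one can have $c_Z/c_{\op{Gr}}\ge\sqrt{2-\epsilon}$, so your proposed intermediate target (dynamically convex domains with elliptic nonresonant return map) cannot succeed without using convexity in some essential way that the sketch does not identify. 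In short, your proposal correctly reassembles the known sandwich $c_{\op{Gr}}\le c\le c_Z$ and the known equalities of the middle capacities, but the two steps that would actually close the gap are missing --- which is precisely why the statement remains a conjecture in the paper.
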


Viterbo conjectured the following statement\footnote{Viterbo also conjectured that equality holds in \eqref{eqn:viterboconj} only if $\op{int}(X)$ is symplectomorphic to an open ball.} in \cite{viterbo-conj}:

\begin{conjecture}[Viterbo conjecture]
\label{eq:viterbo-conj}
If $X$ is a convex domain in $\R^{2n}$ and if $c$ is a normalized symplectic capacity, then
\begin{equation}
\label{eqn:viterboconj}
c(X)\le(n!\op{Vol}(X))^{1/n}.
\end{equation}
\end{conjecture}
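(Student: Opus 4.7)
The plan is to reduce the inequality to a statement about the shortest closed characteristic on $\partial X$ and then bound this quantity in terms of volume. First I would approximate $X$ by a smooth strictly convex domain, so that $\partial X$ carries a Reeb (or characteristic) foliation with well-defined action spectrum. For such $X$, the Hofer--Zehnder capacity satisfies $c_{\op{HZ}}(X) = A_{\min}(\partial X)$, the minimal action of a closed characteristic on $\partial X$. Assuming the strong Viterbo conjecture (Conjecture~\ref{conj:V}), every normalized capacity $c$ agrees with $c_{\op{HZ}}$ on $X$, so it suffices to verify the purely geometric inequality
\[
A_{\min}(\partial X)^n \le n!\,\op{Vol}(X),
\]
which is sharp on round balls, where the minimal characteristic has action $\pi r^2$ and $\op{Vol}(B^{2n}(r)) = (\pi r^2)^n/n!$. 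In this reformulation, the conjecture asserts that the ball minimizes volume among convex bodies with fixed $A_{\min}$.

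My main strategy for the volume bound would be Clarke's dual variational principle: on a smooth strictly convex $X$ with defining Hamiltonian $H$, the minimal action $A_{\min}(\partial X)$ can be expressed as a min--max in terms of the dual (polar) Hamiltonian, and this dual picture interacts well with convex-geometric symmetrization. The hope is to combine this with a symmetrization procedure on $X$ (Steiner or Schwarz type) under which $A_{\min}$ does not decrease while $\op{Vol}$ is preserved, eventually reducing to the ball case. A parallel and well-studied approach is to exploit the connection with Mahler's conjecture due to Artstein-Avidan, Karasev, and Ostrover: for Lagrangian products $K \times T \subset \R^n \times \R^n$ the Viterbo bound is essentially equivalent to a Mahler-type inequality, so progress on Mahler would directly imply the conjecture on this family and might suggest a proof strategy for the general convex case.

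The hardest step, and the reason Conjecture~\ref{eq:viterbo-conj} has remained open since the 1990s, is precisely this final volume bound: no known mechanism controls the action of the shortest closed characteristic by $\op{Vol}(X)$ alone for a general convex body. Moreover, the reduction to a single capacity via Conjecture~\ref{conj:V} is not available for free, since that conjecture is itself unproven in full generality. I therefore expect that any serious attack must introduce a genuinely new ingredient, such as an isoperimetric-type inequality on contact hypersurfaces bounding convex domains, or a symplectic-embedding argument producing a ball inside $X$ of radius comparable to $(n!\op{Vol}(X))^{1/n}$ whenever a short closed characteristic is present.
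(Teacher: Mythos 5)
The statement you were asked to prove is labeled a \emph{conjecture} in the paper, and the paper contains no proof of it: the only things established there are that the inequality \eqref{eqn:viterboconj} holds for $c = c_{\op{Gr}}$ by the volume constraint (since $\op{Vol}(B^{2n}(r)) = r^n/n!$), and that Conjecture~\ref{conj:V} formally implies Conjecture~\ref{eq:viterbo-conj}. Your write-up is candid about this: you do not claim a proof, and your concluding paragraph correctly identifies that the volume bound $A_{\min}(\partial X)^n \le n!\,\op{Vol}(X)$ for general convex bodies is precisely the open core of the problem, and that the reduction to a single capacity via Conjecture~\ref{conj:V} cannot be assumed. So there is no proof in the paper to compare against, and no proof in your proposal either; the ``gap'' is the conjecture itself.

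That said, the ingredients you assemble are consistent with the paper and with the literature it cites. The identity $c_{\op{HZ}}(X) = A_{\min}(X)$ for smooth convex $X$ is Theorem~\ref{thm:equivalence}(b), and the equality of $c_1^{\op{EH}}$, $c_{\op{HZ}}$, $c_{\op{SH}}$, $c_1^{\op{CH}}$ on convex domains (part (a)) is the honest extent to which ``all normalized capacities agree'' is known---note it excludes $c_{\op{Gr}}$ and $c_Z$, which is exactly why your reduction step requires the unproven Conjecture~\ref{conj:V}. The Mahler connection you mention runs in the direction stated in the paper (Viterbo implies Mahler, via \cite{AAKO}); be careful not to invert it, since progress on Mahler would not by itself yield Viterbo. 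One small addition worth making explicit in your write-up: the Gromov-width case is free and unconditional, which at least anchors the chain $c_{\op{Gr}}(X) \le c(X) \le c_Z(X)$ of Lemma~\ref{lem:restatement} at its lower end.
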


The inequality \eqref{eqn:viterboconj} is true when $c$ is the Gromov width $c_{\op{Gr}}$, by the volume constraint, because $\op{Vol}(B^{2n}(r)) = r^n/n!$. Thus Conjecture~\ref{conj:V} implies Conjecture~\ref{eq:viterbo-conj}. The Viterbo conjecture recently gained more attention as it was shown in \cite{AAKO} that it implies the Mahler conjecture\footnote{The Mahler conjecture \cite{Mahler} states that for any $n$-dimensional normed space $V$, we have
	\[
		\op{Vol}(B_V)\op{Vol}(B_{V^*})\geq\frac{4^n}{n!},
	\]
where $B_V$ denotes the unit ball of $V$, and $B_{V^*}$ denotes the unit ball of the dual space $V^*$. For some examples of Conjectures \ref{conj:V} and \ref{eq:viterbo-conj} related to the Mahler conjecture, see \cite{shilu}.} in convex geometry.

\begin{lemma}
\label{lem:restatement}
If $X$ is a domain in $\R^{2n}$, then $c_{\op{Gr}}(X)\le c_Z(X)$, with equality if and only if all normalized symplectic capacities of $X$ agree (when they are defined for $X$).
\end{lemma}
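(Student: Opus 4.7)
The plan is to show that among all normalized symplectic capacities, $c_{\op{Gr}}$ is the smallest and $c_Z$ is the largest when evaluated on any fixed domain $X$. Concretely, I would prove that for every normalized symplectic capacity $c$ (defined on $X$),
\[
c_{\op{Gr}}(X) \;\le\; c(X) \;\le\; c_Z(X).
\]
The lemma then follows immediately: taking $c = c_Z$ in the left inequality (and noting that $c_Z$ is itself a normalized capacity, as established earlier in the introduction) gives $c_{\op{Gr}}(X) \le c_Z(X)$; and if equality holds, then the sandwich forces every other normalized capacity to take the same value on $X$. The converse direction is trivial.

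First I would record the scaling identity for normalized capacities on standard balls and cylinders. By conformality, $rB^{2n}(1) = B^{2n}(r^2)$, so for any normalized $c$,
\[
c\bigl(B^{2n}(r^2)\bigr) = c\bigl(rB^{2n}(1)\bigr) = r^2 c\bigl(B^{2n}(1)\bigr) = r^2;
\]
setting $s = r^2$ gives $c(B^{2n}(s)) = s$ for all $s > 0$, and the identical computation with the cylinder yields $c(Z^{2n}(s)) = s$. For the lower bound on $c(X)$, any symplectic embedding $B^{2n}(r) \hookrightarrow X$ gives $r = c(B^{2n}(r)) \le c(X)$ by monotonicity, and taking the supremum over admissible $r$ yields $c_{\op{Gr}}(X) \le c(X)$. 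For the upper bound, any symplectic embedding $X \hookrightarrow Z^{2n}(R)$ gives $c(X) \le c(Z^{2n}(R)) = R$, and taking the infimum over admissible $R$ yields $c(X) \le c_Z(X)$.

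There is essentially no hard step here; the argument is a direct consequence of the monotonicity and conformality axioms together with the normalization $c(B^{2n}(1)) = c(Z^{2n}(1)) = 1$. The only subtlety worth double-checking is that $c_{\op{Gr}}$ and $c_Z$ genuinely qualify as normalized symplectic capacities, so that the sandwich can be invoked with them; but this was observed in the discussion preceding the lemma, where $c_{\op{Gr}}(Z^{2n}(1)) = 1$ follows from Gromov nonsqueezing and the corresponding fact $c_Z(B^{2n}(1)) = 1$ is immediate from the inclusion $B^{2n}(1) \subset Z^{2n}(1)$ together with the volume-like lower bound coming from nonsqueezing applied to balls inside $B^{2n}(1)$.
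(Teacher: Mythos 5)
Your proposal is correct and follows exactly the same route as the paper, which proves the lemma by the one-line observation that every normalized capacity $c$ defined for $X$ satisfies $c_{\op{Gr}}(X)\le c(X)\le c_Z(X)$; you have simply written out the details (the scaling identity $c(B^{2n}(s))=c(Z^{2n}(s))=s$ and the two applications of monotonicity) that the paper leaves implicit as ``follows from the definitions.'' Your closing remark correctly identifies the only nontrivial input, namely that Gromov nonsqueezing is what makes $c_{\op{Gr}}$ and $c_Z$ themselves normalized.
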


\begin{proof}
It follows from the definitions that if $c$ is a normalized symplectic capacity defined for $X$, then
$
c_{\op{Gr}}(X) \le c(X) \le c_Z(X).
$
\end{proof}

Thus the strong Viterbo conjecture is equivalent to the statement that every convex domain $X$ satisfies $c_{\op{Gr}}(X)=c_Z(X)$. We now discuss some examples where it is known that $c_{\op{Gr}}=c_Z$. Hermann \cite{Hermann} showed that all $T^n$-invariant convex domains have to satisfy $c_{\op{Gr}}=c_Z$. This generalizes to $S^1$-invariant convex domains by the following elementary argument:

\begin{proposition}[Y. Ostrover, private communication]
\label{prop:ostrover}
Let $X$ be a compact convex domain in $\C^{n}$ which is invariant under the $S^1$ action by $e^{i\theta}\cdot z = (e^{i\theta}z_1,\ldots,e^{i\theta}z_n)$. Then $c_{\op{Gr}}(X)=c_Z(X)$.
\end{proposition}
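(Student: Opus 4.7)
The plan is to show the nontrivial inequality $c_Z(X) \le c_{\op{Gr}}(X)$; by Lemma~\ref{lem:restatement} the reverse inequality is automatic. The key idea is to exhibit a single radius $r_0$ such that $B^{2n}(r_0)\subseteq X$ while simultaneously $X$ embeds unitarily into $Z^{2n}(r_0)$. I expect this $r_0$ to be the inradius of $X$ (with respect to the standard round balls).

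First, I note that $X$ contains the origin: given any $z\in X$, the $S^1$-orbit $\{e^{i\theta}z\}$ lies in $X$, and convexity makes its center of mass, $0$, belong to $X$. Hence
\[
r_0 \eqdef \sup\left\{r>0 \;\big|\; B^{2n}(r)\subseteq X\right\}
\]
is positive, and by compactness of $X$ the supremum is attained, yielding a point $p\in\partial X\cap\partial B^{2n}(r_0)$ so in particular $\pi|p|^2=r_0$. From $B^{2n}(r_0)\subseteq X$ we get $c_{\op{Gr}}(X)\ge r_0$.

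For the cylindrical bound, pick a supporting hyperplane to $X$ at $p$. Since $X$ contains the closed ball $B^{2n}(r_0)$ tangent to $\partial X$ at $p$, the hyperplane must also support the ball at $p$, so its outward normal points in the direction of $p$. Thus
\[
X \subseteq \left\{z\in\C^n \;\big|\; \op{Re}\langle z,p\rangle \le |p|^2\right\},
\]
using the Hermitian product $\langle z,w\rangle = \sum z_j\overline{w_j}$. Because $X$ is $S^1$-invariant, applying $e^{-i\theta}$ shows that for every $\theta$,
\[
X \subseteq \left\{z \;\big|\; \op{Re}\bigl(e^{i\theta}\langle z,p\rangle\bigr) \le |p|^2\right\}.
\]
Intersecting over all $\theta$, the left-hand supremum becomes the modulus, giving
\[
X \subseteq \left\{z \;\big|\; |\langle z,p\rangle| \le |p|^2\right\}.
\]
Finally, choose $U\in\U(n)$ with $Up=(|p|,0,\ldots,0)$; since $U$ is a (linear) symplectomorphism,
\[
U(X) \subseteq \left\{w \;\big|\; |p|\,|w_1| \le |p|^2\right\} = \left\{w \;\big|\; \pi|w_1|^2 \le r_0\right\} = Z^{2n}(r_0),
\]
so $c_Z(X)\le r_0$. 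Combining with Lemma~\ref{lem:restatement} yields $c_Z(X)\le r_0\le c_{\op{Gr}}(X)\le c_Z(X)$, forcing equality throughout.

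The argument is genuinely elementary — no infinite-dimensional machinery is needed. The one subtle point, and what I would call the main obstacle, is recognizing that the diagonal $S^1$-action lets one convert the real supporting half-space at $p$ into a \emph{complex} supporting cylinder by averaging; after this the unitary change of coordinates is essentially free.
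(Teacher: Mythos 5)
Your proof is correct and follows essentially the same route as the paper's: both identify the largest ball $B^{2n}(r_0)$ inscribed at the origin, take the supporting hyperplane at a contact point $p\in\partial X\cap\partial B^{2n}(r_0)$, and average it over the diagonal $S^1$-action to trap $X$ in the cylinder $Z^{2n}(r_0)$ after a unitary change of coordinates. The only (cosmetic) differences are that you work with supporting hyperplanes directly, thereby avoiding the paper's smoothing approximation of $\partial X$, and you explicitly justify that the origin lies in $X$.
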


\begin{proof}
By compactness, there exists $z_0\in\partial X$ minimizing the distance to the origin. Let $r>0$ denote this minimal distance. Then the ball $(|z|\le r)$ is contained in $X$, so by definition $c_{\op{Gr}}(X) \ge \pi r^2$.

By applying an element of $U(n)$, we may assume without loss of generality that $z_0=(r,0,\ldots,0)$. By a continuity argument, we can assume without loss of generality that $\partial X$ is a smooth hypersurface in $\R^{2n}$. By the distance minimizing property, the tangent plane to $\partial X$ at $z_0$ is given by $(z\cdot (1,0,\ldots,0)=r)$ where $\cdot$ denotes the real inner product. By convexity, $X$ is contained in the half-space $(z\cdot (1,0,\ldots,0)\le r)$. By the $S^1$ symmetry, $X$ is also contained in the half-space $(z\cdot (e^{i\theta},0,\ldots,0)\le r)$ for each $\theta\in\R/2\pi\Z$. Thus $X$ is contained in the intersection of all these half-spaces, which is the cylinder $|z_1|\le r$. Then $c_Z(X)\le \pi r^2$ by definition.
\end{proof}

\begin{remark}
A similar argument shows that if $k\ge 3$ is an integer and if $X\subset\C^n$  is a convex domain invariant under the $\Z/k$ action by $j\cdot z = (e^{2\pi i j/k}z_1,\ldots,e^{2\pi i j/k}z_n)$, then
\[
\frac{c_Z(X)}{c_{\op{Gr}}(X)} \le \frac{k}{\pi}\tan(\pi/k).
\]
\end{remark}

The role of the convexity hypothesis in Conjecture~\ref{conj:V} is somewhat mysterious. We now explore to what extent non-convex domains can satisfy $c_{\op{Gr}}=c_Z$.

To describe some examples, if $\Omega$ is a domain in $\R^n_{\ge 0}$, define the {\em toric domain\/}
\[
X_\Omega = \left\{z\in\C^n \;\big|\; \pi(|z_1|^2,\ldots,|z_n|^2)\in\Omega\right\}.
\]
The factors of $\pi$ ensure that
\begin{equation}
\label{eqn:voltoric}
\op{Vol}(X_\Omega) = \op{Vol}(\Omega).
\end{equation}
Let $\partial_+\Omega$ denote the set of $\mu\in\partial\Omega$ such that $\mu_j>0$ for all $j=1,\ldots,n$.

\begin{definition}
A {\em monotone toric domain\/} is a compact toric domain $X_\Omega$ with smooth boundary such that if $\mu\in\partial_+\Omega$ and if $v$ an outward normal vector at $\mu$, then $v_j\ge 0$ for all $j=1,\ldots,n$. See Figure \ref{fig:monotonetoric}.

A {\em strictly monotone toric domain\/} is a compact toric domain $X_\Omega$ with smooth boundary such that if $\mu\in\overline{\partial_+\Omega}$ and if $v$ is a nonzero outward normal vector at $\mu$, then $v_j>0$ for all $j=1,\ldots,n$.
\end{definition}

One of our main results is the following:

\begin{theorem}
\label{thm:4d}
(proved in \S\ref{sec:ech})
If $X_\Omega$ is a monotone toric domain in $\R^4$, then $c_{\op{Gr}(X)}=c_Z(X)$.
\end{theorem}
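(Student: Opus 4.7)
By Lemma~\ref{lem:restatement} it suffices to prove $c_Z(X_\Omega)\le c_{\op{Gr}}(X_\Omega)$. Swapping the two complex coordinates (a symplectomorphism of $\R^4$ that replaces $\Omega$ by its reflection across the diagonal), we may assume
\[
a\eqdef\max\{\mu_1:(\mu_1,\mu_2)\in\Omega\}\;\le\;\max\{\mu_2:(\mu_1,\mu_2)\in\Omega\}.
\]
Then $\Omega\subset\{\mu_1\le a\}$ and consequently $X_\Omega\subset Z^4(a)$, so $c_Z(X_\Omega)\le a$. The entire problem reduces to constructing a symplectic embedding $B^4(a)\hookrightarrow X_\Omega$, equivalently to proving $c_{\op{Gr}}(X_\Omega)\ge a$.

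The monotone hypothesis forces $\Omega$ to be a down-set in $\R^2_{\ge 0}$; in particular both $(a,0)$ and $(0,a)$ lie in $\overline{\Omega}$. In the special case where $\Omega$ is also convex, this immediately yields $T_a\eqdef\{\mu_1,\mu_2\ge 0,\;\mu_1+\mu_2\le a\}\subset\Omega$, from which the toric inclusion $B^4(a)\hookrightarrow X_\Omega$ is immediate. The substance of the theorem is therefore the non-convex monotone case: here $T_a$ can fail to lie in $\Omega$, so no toric ball embedding of size $a$ exists and a genuinely non-toric symplectic embedding must be produced.

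To handle this case I would use embedded contact homology as an intermediary. First, I would derive a combinatorial formula for $c_k^{\op{ECH}}(X_\Omega)$ for monotone toric domains, by perturbing $\partial X_\Omega$ to a non-degenerate contact form and counting lattice paths adapted to $\partial_+\Omega$, in the spirit of the calculations in \cite{qech} for concave and convex toric domains. Second, I would invoke the sharpness of ECH capacities as obstructions to symplectic embeddings of balls into four-dimensional monotone toric domains, in the lineage of the McDuff and Cristofaro-Gardiner sharpness theorems. With those inputs the desired embedding $B^4(a)\hookrightarrow X_\Omega$ reduces to verifying
\[
c_k^{\op{ECH}}\bigl(B^4(a)\bigr)\;\le\;c_k^{\op{ECH}}(X_\Omega)\quad\text{for every }k\ge 1,
\]
which should follow combinatorially from the lattice-path formula together with the facts that $(a,0),(0,a)\in\overline{\Omega}$ and that $\Omega$ is a down-set.

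The main obstacle I anticipate is establishing ECH sharpness for ball embeddings into monotone (as opposed to just concave or just convex) toric domains; I expect this to require either a careful direct holomorphic-curve analysis or an approximation argument expressing $X_\Omega$ as a limit of domains for which sharpness is already known. Once that technical input is in place, the remainder of the argument is the essentially forced combinatorial comparison described above.
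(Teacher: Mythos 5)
There is a genuine gap, and it occurs at the very first step: you have identified the wrong common value for the capacities. For a non-convex monotone toric domain the number $a=\min(M_1,M_2)$, where $M_j=\max\{\mu_j:\mu\in\Omega\}$, is in general strictly larger than $c_Z(X_\Omega)$, so the inclusion $X_\Omega\subset Z^4(a)$ is not an optimal cylinder embedding, and the ball embedding $B^4(a)\underset{s}{\hookrightarrow}X_\Omega$ that your program sets out to construct simply does not exist. Concretely, let $\partial_+\Omega$ be a smooth strictly decreasing arc from $(0,1)$ to $(1,0)$ that dips down through $(\delta,\delta)$ for small $\delta>0$, so that $\Omega$ is a thin monotone (non-convex) region hugging the axes. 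Then $M_1=M_2=1$, but $\op{Vol}(X_\Omega)=\op{Vol}(\Omega)$ can be made arbitrarily small, so the volume constraint already gives $c_{\op{Gr}}(X_\Omega)\le\sqrt{2\op{Vol}(\Omega)}\ll 1$. Your proposed verification $c_k^{\op{ECH}}(B^4(1))\le c_k^{\op{ECH}}(X_\Omega)$ would likewise fail for all large $k$ by the volume asymptotics of ECH capacities. The correct common value is $r$, the largest number with $\Delta^2(r)\subset\Omega$, equivalently $\min\{\mu_1+\mu_2:\mu\in\overline{\partial_+\Omega}\}$, which in the example above is roughly $2\delta$.

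With that value the logical burden falls in exactly the opposite place from where your proposal puts it: the ball embedding $B^4(r)\subset X_\Omega$ is the trivial toric inclusion, and the hard direction is the cylinder bound $c_Z(X_\Omega)\le r$, which requires a genuinely non-toric embedding of $X_\Omega$ into a cylinder smaller than the obvious circumscribing one. The paper achieves this by enclosing $\Omega$ in a concave quadrilateral with vertices $(0,0)$, $(a,0)$, $(\mu_1,\mu_2)$, $(0,b)$, where $\mu\in\partial_+\Omega$ realizes $\mu_1+\mu_2=r$, and then embedding the corresponding concave toric domain into the polydisk $P(r,\max(b,a-r))\subset Z^4(r)$ via Lemma~\ref{lem:folding}; that lemma in turn rests on the weight decomposition, the Traynor trick (Proposition~\ref{prop:traynor}), and Cristofaro-Gardiner's sharpness theorem for embeddings of concave toric domains into weakly convex ones (Theorem~\ref{thm:concaveconvex}). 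So ECH capacities do enter, but as a tool for embedding $X_\Omega$ \emph{into} a polydisk, not for embedding a large ball into $X_\Omega$; your argument would need to be reoriented accordingly.
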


Note that monotone toric domains do not have to be convex; see \S\ref{sec:toric} for details on when toric domains are convex. (Toric domains that are convex are already covered by Proposition~\ref{prop:ostrover}.)
 
To clarify the hypothesis in Theorem~\ref{thm:4d}, let $X$ be a compact domain in $\R^{2n}$ with smooth boundary, and suppose that $X$ is ``star-shaped'', meaning that the radial vector field on $\R^{2n}$ is transverse to $\partial X$. Then there is a well-defined Reeb vector field $R$ on $\partial X$. We say that $X$ is {\em dynamically convex\/} if, in addition to the above hypotheses, every Reeb orbit $\gamma$ has Conley-Zehnder index $\op{CZ}(\gamma)\ge n+1$ if nondegenerate, or in general has minimal Conley-Zehnder index\footnote{If $\gamma$ is degenerate then there is an interval of possible Conley-Zehnder indices of nondegenerate Reeb orbits near $\gamma$ after a perturbation, and for dynamical convexity we require the minimum number in this interval to be at least $n+1$. In the $4$-dimensional case ($n=2$), this means that the dynamical rotation number of the linearized Reeb flow around $\gamma$, which we denote by $\rho(\gamma)\in\R$, is greater than $1$.} at least $n+1$. It was shown by Hofer-Wysocki-Zehnder \cite{HWZ} that if $X$ is strictly convex, then $X$ is dynamically convex. However the Viterbo conjecture implies that not every dynamically convex domain is symplectomorphic to a convex domain; see Remark~\ref{rem:abhs} below.

\begin{proposition}
\label{prop:dyncon}
(proved in \S\ref{sec:toric})
Let $X_\Omega$ be a compact star-shaped toric domain in $\R^4$ with smooth boundary. Then $X_\Omega$ is dynamically convex if and only if $X_\Omega$ is a strictly monotone toric domain.
\end{proposition}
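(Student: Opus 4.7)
My plan is to classify the Reeb orbits on $\partial X_\Omega$, compute their dynamical rotation numbers (or minimal Conley-Zehnder indices) explicitly in terms of the geometry of $\partial\Omega$, and then match the condition $\rho(\gamma)>1$ to strict monotonicity. Because $X_\Omega$ is compact, star-shaped, and has smooth boundary, $\Omega$ is star-shaped in $\R^2_{\ge 0}$ with $\partial_+\Omega$ a smooth arc from $(\mu_1^0,0)$ to $(0,\mu_2^0)$. In action-angle coordinates $(\mu_k,\theta_k)$ the Liouville form is $\lambda=\tfrac{1}{2\pi}\sum_k\mu_k\,d\theta_k$, and a direct computation shows that the Reeb vector field on $\partial X_\Omega$ is proportional to $F_1\partial_{\theta_1}+F_2\partial_{\theta_2}$, where $(F_1,F_2)$ is the outward normal to $\partial\Omega$ at the corresponding $\mu$. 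Consequently the Reeb orbits on $\partial X_\Omega$ split into two families: two axis orbits $\gamma_i$ on the circles $\{z_{3-i}=0,\,\pi|z_i|^2=\mu_i^0\}$, and, at each $\mu\in\partial_+\Omega$ whose outward normal has rational direction $(m_1{:}m_2)$ with $m_1,m_2$ coprime integers, a Morse-Bott $S^1$-family of simple closed torus orbits on $T^2_\mu$ winding $m_i$ times around $\theta_i$.

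Next I compute the relevant indices. Linearizing the Hamiltonian flow in the transverse $z_{3-i}$-plane around $\gamma_i$ and using the obvious disk-filling trivialization of $\xi|_{\gamma_i}$ (which along $\gamma_1$ coincides with the ambient $z_2$-plane) yields, for the outward normal $(\nu_1,\nu_2)$ at $(\mu_1^0,0)$ with $\nu_1>0$ forced by smoothness and star-shapedness,
\[
\rho(\gamma_1^k)=k\Bigl(1+\tfrac{\nu_2}{\nu_1}\Bigr),
\]
and symmetrically for $\gamma_2$. For a simple torus orbit $\gamma$ in coprime integer direction $(m_1,m_2)$ with $m_1,m_2>0$, a parallel computation of the linearized Reeb flow on the contact plane (which turns out to be parabolic, with shear determined by the Hessian of the defining function of $\partial\Omega$ in the tangent direction) together with a Morse-Bott perturbation of the $S^1$-family and a careful comparison of the action-angle and disk trivializations of $\xi$ gives minimal Conley-Zehnder index
\[
\CZ_{\min}(\gamma^k)=2k(m_1+m_2)-1.
\]
A clean way to verify the latter is to degenerate to the ellipsoid $E(m_1,m_2)$ (which is strictly monotone with linear $\partial_+\Omega$) and read off the index from the axis formula applied to the iterates $\gamma_1^{m_2}$ and $\gamma_2^{m_1}$ into which the torus orbit splits after a generic small perturbation.

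With these formulas both implications are essentially immediate. If $X_\Omega$ is dynamically convex, then at each axis endpoint the condition $\rho(\gamma_i)>1$ forces the tangential component of the outward normal to be strictly positive. If some interior $\mu\in\partial_+\Omega$ had an outward normal with a non-positive component, then by continuity and positivity at the two axis endpoints there would exist $\mu^*\in\partial_+\Omega$ where the outward normal is parallel to $(1,0)$ or $(0,1)$, producing a Morse-Bott family of Reeb orbits with $m_1+m_2=1$ and hence $\CZ_{\min}\le 2<3$, contradicting dynamical convexity. Conversely, if $X_\Omega$ is strictly monotone, the axis formula gives $\rho(\gamma_i^k)>k\ge 1$, and every rational normal direction on $\partial_+\Omega$ satisfies $m_1,m_2\ge 1$, so $\CZ_{\min}(\gamma^k)\ge 2k\cdot 2-1\ge 3$ for every iterate. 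Hence $X_\Omega$ is dynamically convex.

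The main obstacle is the torus-orbit index formula: tracking the disk trivialization of $\xi$ along a torus orbit relative to the natural action-angle trivialization, and carrying out the Morse-Bott perturbation, is delicate. The ellipsoid-degeneration shortcut circumvents the most technical part of this computation by reducing it to the comparatively elementary axis-orbit case.
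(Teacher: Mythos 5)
Your proposal is correct and follows essentially the same route as the paper: both arguments classify the Reeb orbits on $\partial X_\Omega$ into the two axis circles and the $S^1$-families over points of $\partial_+\Omega$ with rational slope, compute the rotation numbers $1+\nu_2/\nu_1$ and $\nu_1+\nu_2$ in terms of the outward normal (equivalent to the paper's formulas $1-s_1^{-1}$, $1-s_2$ in terms of slopes), and use the same continuity argument at a horizontal or vertical tangency of $\partial_+\Omega$ to get the forward implication. The only real difference is that the paper outsources the orbit classification and rotation-number computation to \cite[\S3.2]{concave} and \cite[\S2.2]{GuH}, whereas you sketch these computations directly, and your resulting formulas agree with theirs.
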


Thus Theorem~\ref{thm:4d} implies that all dynamically convex toric domains in $\R^4$ have $c_{\op{Gr}}=c_Z$.

If $X$ is a star-shaped domain with smooth boundary, let $A_{\min}(X)$ denote the minimal period of a Reeb orbit on $\partial X$.

\begin{remark}
\label{rem:abhs}
Without the toric hypothesis, not all dynamically convex domains in $\R^4$ have $c_{\op{Gr}}=c_Z$. In particular, it is shown in \cite{ABHS} that for $\epsilon>0$ small, there exists a dynamically convex domain $X$ in $\R^4$ such that $A_{\min}(X)^2/(2\op{vol}(X))\ge 2-\epsilon$. One has $c_1^{\op{CH}}(X)\ge A_{\min}(X)$ by \cite[Thm.\ 1.1]{GuH}, and $c_{\op{Gr}}(X)^2\le 2\op{vol}(X)$ by the volume constraint. Thus
\[
\frac{c_Z(X)}{c_{\op{Gr}}(X)} \ge \sqrt{2-\epsilon}.
\] 
\end{remark}

\begin{remark}
It is also not true that all star-shaped toric domains have $c_{\op{Gr}}=c_Z$. Counterexamples have been known for a long time, see e.g.\ \cite{Hermann}, and in \S\ref{sec:ife} we discuss a new family of counterexamples.
\end{remark}

For monotone toric domains in higher dimensions, we do not know how to prove that all normalized symplectic capacities agree, but we can at least prove the following:

\begin{theorem}
\label{thm:arbdim}
(proved in \S\ref{sec:equi})
If $X_\Omega$ is a monotone toric domain in $\R^{2n}$, then
\begin{equation}
\label{eq:mainmoreover}
c_{\op{Gr}}(X_\Omega) = c_1^{\op{CH}}(X_\Omega).
\end{equation}
\end{theorem}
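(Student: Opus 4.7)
By Lemma~\ref{lem:restatement}, the inequality $c_{\op{Gr}}(X_\Omega) \leq c_1^{\op{CH}}(X_\Omega)$ holds automatically, so only the reverse direction requires proof. My plan is to exhibit a common value
\[
\delta := \min\bigl\{\mu_1 + \cdots + \mu_n : \mu \in \overline{\partial_+\Omega}\bigr\},
\]
attained at some $\mu^* \in \overline{\partial_+\Omega}$ by compactness, and to prove both $c_{\op{Gr}}(X_\Omega) \ge \delta$ and $c_1^{\op{CH}}(X_\Omega) \le \delta$.

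For the Gromov width, the key observation is that the monotone hypothesis makes $\Omega \cap \R^n_{\ge 0}$ downward-closed: along $\partial_+\Omega$ the vector $-e_j$ pairs non-positively with any outward normal, so the flow of $-e_j$ preserves $\Omega$. I would then show that the simplex $\Delta(\delta) := \{\mu \in \R^n_{\ge 0} : \sum_i \mu_i \le \delta\}$ lies inside $\Omega$: any hypothetical point $\mu \in \Delta(\delta) \setminus \Omega$ would give, via the ray from the origin through $\mu$, a boundary point in $\overline{\partial_+\Omega}$ with coordinate sum strictly less than $\delta$, contradicting the minimality of $\delta$. Since $B^{2n}(\delta) = X_{\Delta(\delta)}$, the resulting inclusion $B^{2n}(\delta) \subseteq X_\Omega$ yields $c_{\op{Gr}}(X_\Omega) \ge \delta$.

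For the upper bound on $c_1^{\op{CH}}$, I would exploit the spectral property that on a smooth star-shaped domain with nondegenerate Reeb flow, $c_1^{\op{CH}}$ is realized as the action of a closed Reeb orbit of Conley--Zehnder index $n+1$. At the minimizer $\mu^*$ (interior case of $\partial_+\Omega$), a Lagrange-multiplier argument shows the outward normal is parallel to $(1,\ldots,1)$, so the Reeb flow on the $T^n$-fibre over $\mu^*$ is equiangular, producing a $T^{n-1}$-family of simple closed Reeb orbits, each of action $\sum_j \mu^*_j = \delta$. A standard Morse--Bott perturbation of $\partial X_\Omega$ replaces this family by nondegenerate orbits including at least one of Conley--Zehnder index $n+1$, with action $\delta + o(1)$. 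This orbit contributes the class computing $c_1^{\op{CH}}$, yielding $c_1^{\op{CH}}(X_\Omega) \le \delta + o(1)$; letting the perturbation shrink gives $c_1^{\op{CH}}(X_\Omega) \le \delta$.

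The main obstacle is this last step: one must verify that the perturbed index-$(n{+}1)$ generator really represents the appropriate class in positive $S^1$-equivariant symplectic homology at action level $\delta$, rather than being cancelled by a differential. This is carried out analogously to the convex toric computation in \cite{GuH}; the monotone hypothesis places the relevant orbit at the bottom of the action spectrum, because every other toric Reeb orbit on $\partial X_\Omega$ has action of the form $\sum_j m_j \mu_j$ with $m \in \Z^n_{\ge 1}$ at some $\mu \in \overline{\partial_+\Omega}$, which is at least $\sum_j \mu_j \ge \delta$. Edge cases---when $\mu^*$ lies on a coordinate axis, or when the outward normal at $\mu^*$ has a vanishing component---should be handled by first proving the theorem for strictly monotone toric domains and then approximating.
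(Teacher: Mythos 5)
Your first half coincides with the paper's: the value $\delta=\min\{\sum_i\mu_i:\mu\in\overline{\partial_+\Omega}\}$ is exactly the paper's $a$ (the largest $a$ with $\Delta^n(a)\subset\Omega$), the inclusion $B^{2n}(\delta)\subset X_\Omega$ gives $c_{\op{Gr}}(X_\Omega)\ge\delta$, and $c_{\op{Gr}}\le c_1^{\op{CH}}$ follows from Lemma~\ref{lem:restatement}. The gap is in the remaining inequality $c_1^{\op{CH}}(X_\Omega)\le\delta$, which you propose to obtain by a direct Floer-theoretic computation: perturb the Morse--Bott torus over the minimizer $\mu^*$, locate a nondegenerate orbit of Conley--Zehnder index $n+1$ and action $\delta+o(1)$, and assert that it represents the class computing $c_1^{\op{CH}}$. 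That last assertion is the entire difficulty, and ``analogously to \cite{GuH}'' does not supply it: the computations in \cite{GuH} are carried out only for convex and concave toric domains, whose special combinatorial structure is what makes the equivariant differential and the min-max class controllable, and a general monotone toric domain is neither. Your supporting observation that every Reeb orbit has action $\ge\delta$ only yields the \emph{lower} bound $c_1^{\op{CH}}(X_\Omega)\ge A_{\min}=\delta$; for the upper bound one must show that the degree-$(n+1)$ generator of positive $S^1$-equivariant symplectic homology defining $c_1^{\op{CH}}$ is already represented at action level $\delta$, i.e.\ is not killed or pushed up by the differential. No such computation is known for general monotone toric domains, which is precisely why the paper does not argue this way.

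The paper's route avoids the computation entirely by monotonicity of the capacity under inclusion: Lemma~\ref{lem:monl} uses the monotone hypothesis to show $\Omega\subset L(\mu_1,\ldots,\mu_n)$ for the minimizing $\mu$, and Lemma~\ref{lem:c1chl} evaluates $c_1^{\op{CH}}(X_{L(\mu_1,\ldots,\mu_n)})=\sum_j\mu_j=\delta$ by applying the already-established concave-toric-domain formula \eqref{eqn:c1concave} from \cite[Thm.\ 1.14]{GuH} to the (noncompact, concave-type) L-shaped domain via an exhaustion argument. To repair your write-up with minimal change, replace your Morse--Bott step by this sandwich $\Delta^n(\delta)\subset\Omega\subset L(\mu_1,\ldots,\mu_n)$; otherwise you would need to actually carry out the equivariant symplectic homology computation for monotone toric domains, which is far more than a remark.
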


Returning to convex domains, some normalized symplectic capacities are known to agree (not the Gromov width or cylindrical capacity however), as we review in the following theorem:

\begin{theorem}[Ekeland, Hofer, Zehnder, Abbondandolo-Kang, Irie]
\label{thm:equivalence}
If $X$ is a convex domain in $\R^{2n}$, then:
\begin{description}
\item{(a)}
$c_1^{\op{EH}}(X)=c_{\op{HZ}}(X)=c_{\op{SH}}(X)=c_1^{\op{CH}}(X)$.
\item{(b)}
If in addition $\partial X$ is smooth\footnote{Without the smoothness assumption, it is shown in \cite[Prop. 2.7]{AAO} that $c_{\op{HZ}}(X)$ agrees with the minimum action of a ``generalized closed characteristic'' on $\partial X$.}, then all of the capacities in (a) agree with $A_{\op{min}}(X)$.
\end{description}
\end{theorem}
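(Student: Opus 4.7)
The plan is to prove part (b) first by showing that each of the four capacities equals $A_{\min}(X)$ whenever $\partial X$ is smooth and strictly convex, and then to deduce part (a) by a smooth approximation argument. For part (b), I would assemble four separate identifications. First, $c_{\op{HZ}}(X) = A_{\min}(X)$ is the classical theorem of Hofer-Zehnder \cite{HZ}: the $\ge$ direction constructs admissible Hamiltonians supported in $\op{int}(X)$ without fast periodic orbits, while the $\le$ direction uses the existence of a closed characteristic of minimal action on $\partial X$. Second, $c_1^{\op{EH}}(X) = A_{\min}(X)$ is proved in the original Ekeland-Hofer papers \cite{EH,EH2} by directly computing the min-max value on the loop space that defines $c_1^{\op{EH}}$. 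Third, $c_1^{\op{CH}}(X) = A_{\min}(X)$ follows by combining two facts: every smooth strictly convex domain is dynamically convex by Hofer-Wysocki-Zehnder \cite{HWZ}, and for dynamically convex star-shaped domains the positive $S^1$-equivariant symplectic homology has a nice Reeb-orbit description that realizes $c_1^{\op{CH}}$ as the minimal Reeb action, as computed in \cite{GuH}. Finally, $c_{\op{SH}}(X) = A_{\min}(X)$ is the Abbondandolo-Kang and Irie identification, obtained by analyzing the action filtration on symplectic homology and showing that the class defining $c_{\op{SH}}$ is represented by a generator of action $A_{\min}(X)$.

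For part (a), I would pass from smooth convex domains to general convex ones by sandwiching $X$ between smooth strictly convex domains. For each $\epsilon > 0$, standard convex-geometry constructions (for instance, convolving the gauge function of $X$ with a smooth bump and rescaling) produce smooth strictly convex domains $X_\epsilon^- \subset X \subset X_\epsilon^+$ satisfying $(1-\epsilon) X_\epsilon^+ \subset X_\epsilon^-$. Monotonicity and conformality then imply, for any normalized symplectic capacity $c$,
\[
(1-\epsilon)^2\, c(X_\epsilon^+) \;\le\; c(X_\epsilon^-) \;\le\; c(X) \;\le\; c(X_\epsilon^+).
\]
Applying this simultaneously to all four capacities in (a), which agree on the smooth domains $X_\epsilon^\pm$ by part (b), and sending $\epsilon \to 0$, forces them to coincide on $X$.

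The main obstacle is the identification $c_{\op{SH}}(X) = A_{\min}(X)$. The other three equalities rest on essentially classical critical-point or Hamiltonian arguments (for $c_{\op{HZ}}$ and $c_1^{\op{EH}}$), or on the by-now-standard equivariant symplectic homology computation for dynamically convex star-shaped domains (for $c_1^{\op{CH}}$). Matching the non-equivariant Viterbo capacity with the minimal closed Reeb action, however, is substantially more delicate: one must control how the action filtration on symplectic homology interacts with the symplectic homology unit, which is precisely what the arguments of Abbondandolo-Kang and Irie achieve.
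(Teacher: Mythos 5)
Your overall architecture coincides with the paper's: prove (b) for smooth boundaries by identifying each capacity with $A_{\min}$, then deduce (a) by sandwiching a general convex domain between nearby smooth convex ones and using monotonicity plus conformality (this is exactly the $C^0$-continuity argument the paper uses). The identifications of $c_{\op{HZ}}$ and $c_{\op{SH}}$ with $A_{\min}$ are also handled as in the paper, by citing Hofer--Zehnder and Irie/Abbondandolo--Kang respectively. However, two of your four identifications do not hold up as written.

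First, for $c_1^{\op{EH}}(X)=A_{\min}(X)$ you defer entirely to \cite{EH,EH2}. The authors state explicitly that they could not locate a complete proof of this equality in the literature, and the whole of \S\ref{sec:eh} exists to supply one. What \cite{EH2} does give is spectrality ($c_1^{\op{EH}}$ is the action of some closed characteristic, hence $\ge A_{\min}$); the missing half is the upper bound, which Theorem~\ref{thm:ehz} obtains by building an explicit $S^1$-invariant min-max set $\xi=[0,\infty)\cdot P^+x_0\oplus E^0\oplus E^-$ from an action-minimizing orbit $x_0$ and exploiting concavity of the action functional on the affine slices of $\xi$. Your proposal leaves this step unproved. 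Second, for $c_1^{\op{CH}}$ your route (strict convexity $\Rightarrow$ dynamical convexity by \cite{HWZ}, then ``\cite{GuH} computes $c_1^{\op{CH}}=A_{\min}$ for dynamically convex star-shaped domains'') attributes to \cite{GuH} a statement that is not there: in general only the inequality $c_1^{\op{CH}}\ge A_{\min}$ is known for star-shaped domains. To get the reverse inequality from a Reeb-orbit description of positive equivariant symplectic homology one must know that the action-minimizing orbit represents the generator in the relevant degree; dynamical convexity only guarantees $\CZ\ge n+1$ for all orbits and does not by itself prevent the action minimizer from having strictly larger index. The paper avoids this issue entirely by a different route: it deduces $c_1^{\op{CH}}(X)=c_{\op{SH}}(X)$ for convex domains from \cite[Theorem 1.24]{GuH} combined with \cite[Lemma 3.2]{GS} (a comparison of positive $S^1$-equivariant and non-equivariant symplectic homology, as explained in \cite[Remark 2.15]{Irie}), and then feeds in the Irie/Abbondandolo--Kang computation of $c_{\op{SH}}$. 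You should either adopt that route or supply the missing index information for the minimal orbit on a strictly convex hypersurface as an additional input.
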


\begin{proof}
Part (b) implies part (a) because every convex domain can be $C^0$ approximated by one with smooth boundary; and the capacities in (a) are $C^0$ continuous functions of the convex domain $X$, by monotonicity and conformality.

Part (b) was shown for $c_{\op{HZ}}(X)$ by Hofer-Zehnder in \cite{HZ}, and for $c_{\op{SH}}(X)$ by Irie \cite{Irie} and Abbondandolo-Kang \cite{AK}. The agreement of these two capacities with $c_1^{\op{CH}}(X)$ for convex domains now follows from the combination of \cite[Theorem 1.24]{GuH} and \cite[Lemma 3.2]{GS}, as explained by Irie in \cite[Remark 2.15]{Irie}. Finally, part (b) for $c_1^{\op{EH}}(X)$ has been claimed and understood for a long time, but since we could not find a complete proof in the literature we give one here in \S\ref{sec:eh}.
\end{proof}

\subsection*{Organization of the paper}

In \S\ref{sec:toric} we discuss different kinds of toric domains and when they are convex or dynamically convex. In \S\ref{sec:equi} we consider the first equivariant capacity and prove Theorem~\ref{thm:arbdim}. In \S\ref{sec:ech} we use ECH capacities to prove Theorem~\ref{thm:4d}. In \S\ref{sec:ife} we consider a family of examples of non-monotone toric domains and determine when they do or do not satisfy the conclusions of Conjectures \ref{conj:V} and \ref{eq:viterbo-conj}. Along the way we compute the cylindrical capacity of a large class of ``weakly convex toric domains'' in four dimensions (Theorem \ref{thm:czwt}). In \S\ref{sec:eh} we review the definition of the first Ekeland-Hofer capacity and complete the (re)proof of Theorem~\ref{thm:equivalence}.

\subsection*{Acknowledgements}
We thank A. Oancea, Y. Ostrover and M. Usher for useful discussions, J. Kang and E. Shelukhin for bringing some parts of the literature to our attention and F. Schlenk for detailed comments on an earlier version of this paper.

\section{Toric domains}
\label{sec:toric}

In this section we review some important classes of toric domains and discuss when they are convex or dynamically convex.

If $\Omega$ is a domain in $\R^n$, define
\[
\widehat{\Omega} = \left\{\mu\in\R^n \;\big|\; (|\mu_1|,\ldots,|\mu_n|)\in\Omega\right\}.
\]

\begin{definition}
\cite{GuH}
A {\em convex toric domain\/} is a toric domain $X_\Omega$ such that $\widehat{\Omega}$ is compact and convex. See Figure \ref{fig:convextoric}.
\end{definition}

This terminology may be misleading because a ``convex toric domain'' is not the same thing as a compact toric domain that is convex in $\R^{2n}$; see Proposition~\ref{prop:convex} below.

\begin{definition}
\cite{GuH}
A {\em concave toric domain\/} is a toric domain $X_\Omega$ such that $\Omega$ is compact and $\R_{\ge 0}^n\setminus\Omega$ is convex. See Figure \ref{fig:concavetoric}.
\end{definition}

We remark that if $X_\Omega$ is a convex toric domain or concave toric domain and if $X_\Omega$ has smooth boundary, then it is a monotone toric domain.

\begin{figure}[ht]
\centering
	\subfloat[A convex toric domain]{
	\begin{minipage}[c][1\width]{0.3\textwidth}
	\centering
\begingroup%
  \makeatletter%
  \providecommand\color[2][]{%
    \errmessage{(Inkscape) Color is used for the text in Inkscape, but the package 'color.sty' is not loaded}%
    \renewcommand\color[2][]{}%
  }%
  \providecommand\transparent[1]{%
    \errmessage{(Inkscape) Transparency is used (non-zero) for the text in Inkscape, but the package 'transparent.sty' is not loaded}%
    \renewcommand\transparent[1]{}%
  }%
  \providecommand\rotatebox[2]{#2}%
  \newcommand*\fsize{\dimexpr\f@size pt\relax}%
  \newcommand*\lineheight[1]{\fontsize{\fsize}{#1\fsize}\selectfont}%
  \ifx\svgwidth\undefined%
    \setlength{\unitlength}{128.95602856bp}%
    \ifx\svgscale\undefined%
      \relax%
    \else%
      \setlength{\unitlength}{\unitlength * \real{\svgscale}}%
    \fi%
  \else%
    \setlength{\unitlength}{\svgwidth}%
  \fi%
  \global\let\svgwidth\undefined%
  \global\let\svgscale\undefined%
  \makeatother%
  \begin{picture}(1,1.00001776)%
    \lineheight{1}%
    \setlength\tabcolsep{0pt}%
    \put(0,0){\includegraphics[width=\unitlength,page=1]{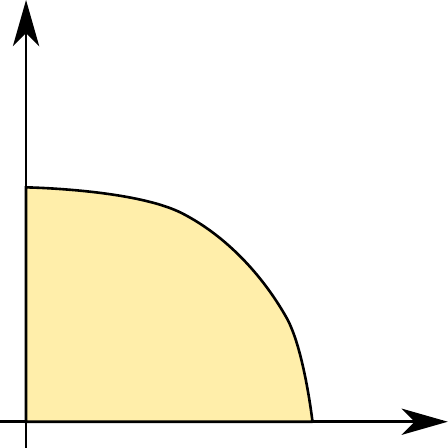}}%
    \put(0.24094589,0.24925444){\color[rgb]{0,0,0}\makebox(0,0)[lt]{\lineheight{1.25}\smash{\begin{tabular}[t]{l}$\Omega$\end{tabular}}}}%
  \end{picture}%
\endgroup%
\label{fig:convextoric}
	\end{minipage}}
	\qquad\qquad\qquad
	\subfloat[A concave toric domain]{
	\begin{minipage}[c][1\width]{0.3\textwidth}
	\centering
\begingroup%
  \makeatletter%
  \providecommand\color[2][]{%
    \errmessage{(Inkscape) Color is used for the text in Inkscape, but the package 'color.sty' is not loaded}%
    \renewcommand\color[2][]{}%
  }%
  \providecommand\transparent[1]{%
    \errmessage{(Inkscape) Transparency is used (non-zero) for the text in Inkscape, but the package 'transparent.sty' is not loaded}%
    \renewcommand\transparent[1]{}%
  }%
  \providecommand\rotatebox[2]{#2}%
  \newcommand*\fsize{\dimexpr\f@size pt\relax}%
  \newcommand*\lineheight[1]{\fontsize{\fsize}{#1\fsize}\selectfont}%
  \ifx\svgwidth\undefined%
    \setlength{\unitlength}{128.95602856bp}%
    \ifx\svgscale\undefined%
      \relax%
    \else%
      \setlength{\unitlength}{\unitlength * \real{\svgscale}}%
    \fi%
  \else%
    \setlength{\unitlength}{\svgwidth}%
  \fi%
  \global\let\svgwidth\undefined%
  \global\let\svgscale\undefined%
  \makeatother%
  \begin{picture}(1,1.00001776)%
    \lineheight{1}%
    \setlength\tabcolsep{0pt}%
    \put(0,0){\includegraphics[width=\unitlength,page=1]{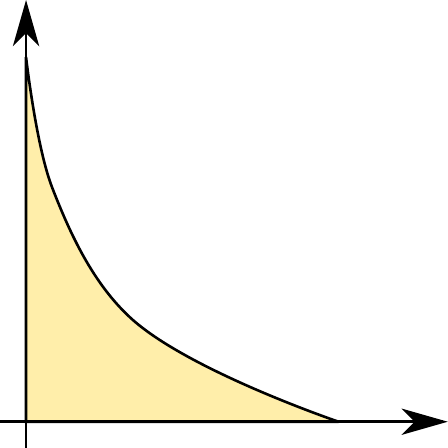}}%
    \put(0.19109503,0.16201549){\color[rgb]{0,0,0}\makebox(0,0)[lt]{\lineheight{1.25}\smash{\begin{tabular}[t]{l}$\Omega$\end{tabular}}}}%
  \end{picture}%
\endgroup%
\label{fig:concavetoric}
	\end{minipage}}\\
	\subfloat[A monotone toric domain]{
	\begin{minipage}[c][1\width]{0.3\textwidth}
	\centering
\begingroup%
  \makeatletter%
  \providecommand\color[2][]{%
    \errmessage{(Inkscape) Color is used for the text in Inkscape, but the package 'color.sty' is not loaded}%
    \renewcommand\color[2][]{}%
  }%
  \providecommand\transparent[1]{%
    \errmessage{(Inkscape) Transparency is used (non-zero) for the text in Inkscape, but the package 'transparent.sty' is not loaded}%
    \renewcommand\transparent[1]{}%
  }%
  \providecommand\rotatebox[2]{#2}%
  \newcommand*\fsize{\dimexpr\f@size pt\relax}%
  \newcommand*\lineheight[1]{\fontsize{\fsize}{#1\fsize}\selectfont}%
  \ifx\svgwidth\undefined%
    \setlength{\unitlength}{128.95602856bp}%
    \ifx\svgscale\undefined%
      \relax%
    \else%
      \setlength{\unitlength}{\unitlength * \real{\svgscale}}%
    \fi%
  \else%
    \setlength{\unitlength}{\svgwidth}%
  \fi%
  \global\let\svgwidth\undefined%
  \global\let\svgscale\undefined%
  \makeatother%
  \begin{picture}(1,1.00001776)%
    \lineheight{1}%
    \setlength\tabcolsep{0pt}%
    \put(0,0){\includegraphics[width=\unitlength,page=1]{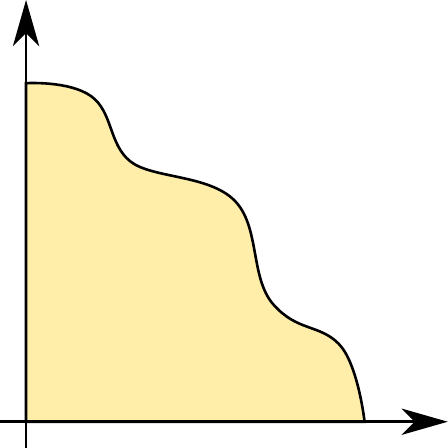}}%
    \put(0.34687902,0.22017482){\color[rgb]{0,0,0}\makebox(0,0)[lt]{\lineheight{1.25}\smash{\begin{tabular}[t]{l}$\Omega$\end{tabular}}}}%
  \end{picture}%
\endgroup%
\label{fig:monotonetoric}
	\end{minipage}}
	\qquad\qquad\qquad
	\subfloat[A weakly convex toric domain]{
	\begin{minipage}[c][1\width]{0.3\textwidth}
	\centering
\begingroup%
  \makeatletter%
  \providecommand\color[2][]{%
    \errmessage{(Inkscape) Color is used for the text in Inkscape, but the package 'color.sty' is not loaded}%
    \renewcommand\color[2][]{}%
  }%
  \providecommand\transparent[1]{%
    \errmessage{(Inkscape) Transparency is used (non-zero) for the text in Inkscape, but the package 'transparent.sty' is not loaded}%
    \renewcommand\transparent[1]{}%
  }%
  \providecommand\rotatebox[2]{#2}%
  \newcommand*\fsize{\dimexpr\f@size pt\relax}%
  \newcommand*\lineheight[1]{\fontsize{\fsize}{#1\fsize}\selectfont}%
  \ifx\svgwidth\undefined%
    \setlength{\unitlength}{128.95602856bp}%
    \ifx\svgscale\undefined%
      \relax%
    \else%
      \setlength{\unitlength}{\unitlength * \real{\svgscale}}%
    \fi%
  \else%
    \setlength{\unitlength}{\svgwidth}%
  \fi%
  \global\let\svgwidth\undefined%
  \global\let\svgscale\undefined%
  \makeatother%
  \begin{picture}(1,1.00001776)%
    \lineheight{1}%
    \setlength\tabcolsep{0pt}%
    \put(0,0){\includegraphics[width=\unitlength,page=1]{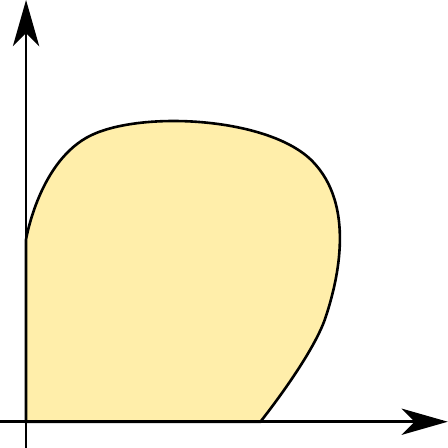}}%
    \put(0.3572646,0.34480203){\color[rgb]{0,0,0}\makebox(0,0)[lt]{\lineheight{1.25}\smash{\begin{tabular}[t]{l}$\Omega$\end{tabular}}}}%
  \end{picture}%
\endgroup%
\label{fig:wconvextoric}
	\end{minipage}}
	\caption{Examples of toric domains $X_\Omega$ in $\R^4$}
\end{figure}

\begin{proposition}
\label{prop:convex}
A toric domain $X_\Omega$ is a convex subset of $\R^{2n}$ if and only if the set
\begin{equation}
\label{eqn:convex}
\widetilde{\Omega} = 
\left\{\mu\in\R^n \;\bigg|\; \pi\left(|\mu_1|^2,\ldots,|\mu_n|^2\right)\in\Omega\right\}
\end{equation}
is convex in $\R^n$.
\end{proposition}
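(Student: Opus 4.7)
The plan is to prove the two directions separately, exploiting the fact that $\widetilde{\Omega}$ is the slice of $X_\Omega$ by the real subspace together with the reflection symmetry $\mu_j\mapsto -\mu_j$ built into the definition of $\widetilde{\Omega}$.

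For the easy direction (``only if''), I would simply observe that under the inclusion $\R^n\hookrightarrow\C^n$ sending $(\mu_1,\ldots,\mu_n)\mapsto(\mu_1+0i,\ldots,\mu_n+0i)$, a point $\mu\in\R^n$ lies in $\widetilde{\Omega}$ iff its image lies in $X_\Omega$, because $|z_j|^2=\mu_j^2$ in both cases. Thus $\widetilde{\Omega}$ is the intersection of $X_\Omega$ with a linear subspace, and intersections of convex sets with subspaces are convex.

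For the converse, I would first isolate a monotonicity lemma: if $\widetilde{\Omega}$ is convex and $\mu\in\widetilde{\Omega}$, then every $\lambda\in\R^n$ with $|\lambda_j|\le|\mu_j|$ for all $j$ also lies in $\widetilde{\Omega}$. This follows coordinate by coordinate. Since $\widetilde{\Omega}$ is invariant under the sign change $\mu_j\mapsto-\mu_j$ (directly from $|\mu_j|^2$ in the definition), the point $\mu'$ obtained from $\mu$ by flipping the sign of the $j$-th coordinate is in $\widetilde{\Omega}$; convexity then puts the full segment from $\mu$ to $\mu'$ in $\widetilde{\Omega}$, which covers all intermediate values of the $j$-th coordinate while keeping the others fixed. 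Iterating over $j$ proves the lemma.

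Now, to show $X_\Omega$ is convex, take $z,z'\in X_\Omega$ and $t\in[0,1]$, and set $w=tz+(1-t)z'$. The triangle inequality gives $|w_j|\le t|z_j|+(1-t)|z'_j|$ for each $j$. Since $(|z_1|,\ldots,|z_n|)$ and $(|z'_1|,\ldots,|z'_n|)$ both lie in $\widetilde{\Omega}$ (trivially, as $\pi(|z_j|^2)=\pi(|z_j|^2)\in\Omega$), convexity of $\widetilde{\Omega}$ gives $\mu\eqdef t(|z_1|,\ldots,|z_n|)+(1-t)(|z'_1|,\ldots,|z'_n|)\in\widetilde{\Omega}$. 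The monotonicity lemma, applied to $\mu$ with $\lambda_j=|w_j|\le\mu_j$, then yields $(|w_1|,\ldots,|w_n|)\in\widetilde{\Omega}$, i.e.\ $\pi(|w_1|^2,\ldots,|w_n|^2)\in\Omega$, so $w\in X_\Omega$.

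There is no real obstacle here; the only subtle point, and the reason the statement is interesting, is the interaction between ordinary convexity of $\widetilde{\Omega}$ in $\R^n$ and the fact that convex combinations in $\C^n$ can only shrink the moduli of coordinates (by the triangle inequality). The reflection symmetry of $\widetilde{\Omega}$ is exactly what bridges these two phenomena via the monotonicity lemma.
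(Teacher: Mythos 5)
Your proof is correct and follows essentially the same route as the paper: both directions hinge on $\widetilde{\Omega}$ being the slice $X_\Omega\cap\R^n$, the reflection symmetry $\mu_j\mapsto-\mu_j$, and the triangle inequality $|tz_j+(1-t)z'_j|\le t|z_j|+(1-t)|z'_j|$. The only difference is organizational: the paper expresses the target point in one step as a convex combination of the $2^n$ sign-flipped vertices $(\pm|z_1|,\ldots,\pm|z_n|)$ and $(\pm|z'_1|,\ldots,\pm|z'_n|)$, whereas you isolate the same fact as a coordinate-by-coordinate monotonicity lemma; the two are interchangeable and your version is, if anything, slightly more explicit.
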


\begin{proof}
$(\Rightarrow)$
The set $\widetilde{\Omega}$ is just the intersection of the toric domain $X_\Omega$ with the subspace $\R^n\subset\C^n$. If $X_\Omega$ is convex, then its intersection with any linear subspace is also convex.

$(\Leftarrow)$
Suppose that the set $\widetilde{\Omega}$ is convex. Let $z,z'\in X_\Omega$ and let $t\in[0,1]$. We need to show that
\[
(1-t)z + tz'\in X_\Omega.
\]
That is, we need to show that
\begin{equation}
\label{eqn:wnts}
\left(\left|(1-t)z_1+ tz'_1\right| , \ldots , \left|(1-t)z_n+z'_n\right|\right) \in \widetilde{\Omega}.
\end{equation}
We know that the $2^n$ points $(\pm|z_1|,\ldots,\pm|z_n|)$ are all in $\widetilde{\Omega}$, as are the $2^n$ points $(\pm|z_1'|,\ldots,\pm|z_n'|)$. 
By the triangle inequality we have
\[
|(1-t)z_j + tz_j'| \le (1-t)|z_j| + t|z'_j|
\]
for each $j=1,\ldots,n$. It follows that the point in \eqref{eqn:wnts} can be expressed as $(1-t)$ times a convex combination of the points $(\pm|z_1|,\ldots,\pm|z_n|)$, plus $t$ times a convex combination of the points $(\pm|z_1'|,\ldots,\pm|z_n'|)$. Since $\widetilde{\Omega}$ is convex, it follows that \eqref{eqn:wnts} holds.
\end{proof}

\begin{example}
If $X_\Omega$ is a convex toric domain, then $X_\Omega$ is a convex subset of $\R^{2n}$.
\end{example}

\begin{proof}
Similarly to the above argument, this boils down to showing that if $w,w'\in\C$ and $0\le t\le 1$ then
\[
|(1-t)w + tw'|^2 \le (1-t)|w|^2 + t|w'|^2.
\]
The above inequality holds because the right hand side minus the left hand side equals $(t-t^2)|w-w'|^2$.
\end{proof}

However the converse is not true:

\begin{example}
Let $p> 0$, and let $\Omega$ be the positive quadrant of the $L^p$ unit ball,
\[
\Omega = \left\{\mu\in\R^n_{\ge 0} \;\bigg|\; \sum_{j=1}^n\mu_j^p\le 1\right\}.
\]
Then $X_\Omega$ is a concave toric domain if and only if $p\le 1$, and a convex toric domain if and only if $p\ge 1$. By Proposition~\ref{prop:convex}, the domain $X_\Omega$ is convex in $\R^{2n}$ if and only if $p\ge 1/2$.
\end{example}

We now work out when four-dimensional toric domains are dynamically convex.

\begin{proof}[Proof of Proposition \ref{prop:dyncon}.]
As a preliminary remark, note that if a Reeb orbit has rotation number $\rho>1$, then so does every iterate of the Reeb orbit. Thus $X_\Omega$ is dynamically convex if and only if every {\em simple\/} Reeb orbit has rotation number $\rho>1$.

Since $X_\Omega$ is star-shaped, $\Omega$ itself is also star-shaped. Since $X_\Omega$ is compact with smooth boundary, $\overline{\partial_+\Omega}$ is a smooth arc from some point $(0,b)$ with $b>0$ to some point $(a,0)$ with $a>0$.

We can find the simple Reeb orbits and their rotation numbers by the calculations in \cite[\S3.2]{concave} and \cite[\S2.2]{GuH}. The conclusion is the following. There are three types of simple Reeb orbits on $\partial X_\Omega$:
\begin{description}
\item{(i)}
There is a simple Reeb orbit corresponding to $(a,0)$, whose image is the circle in $\partial X_\Omega$ with $\pi|z_1|^2=a$ and $z_2=0$.
\item{(ii)}
Likewise, there is a simple Reeb orbit corresponding to $(0,b)$, whose image is the circle in $\partial X_\Omega$ with $z_1=0$ and $\pi|z_2|^2=b$.
\item{(iii)}
For each point $\mu\in\partial_+\Omega$ where $\partial_+\Omega$ has rational slope, there is an $S^1$ family of simple Reeb orbits whose images sweep out the torus in $\partial X_\Omega$ where $\pi(|z_1|^2,|z_2|^2)=\mu$.
\end{description}
Let $s_1$ denote the slope of $\overline{\partial_+\Omega}$ at $(a,0)$, and let $s_2$ denote the slope of $\overline{\partial_+\Omega}$ at $(0,b)$.
Then the Reeb orbit in (i) has rotation number $\rho = 1-s_1^{-1}$, and the Reeb orbit in (ii) has rotation number $\rho = 1-s_2$. For a Reeb orbit in (iii), let $\nu=(\nu_1,\nu_2)$ be the outward normal vector to $\partial_+\Omega$ at $\mu$, scaled so that $\nu_1,\nu_2$ are relatively prime integers. Then each Reeb orbit in this family has rotation number $\rho=\nu_1+\nu_2$.

If $X_\Omega$ is strictly monotone, then $s_1,s_2<0$, and for each Reeb orbit of type (iii) we have $\nu_1,\nu_2\ge 1$. It follows that every simple Reeb orbit has rotation number $\rho>1$.

Conversely, suppose that every simple Reeb orbit has rotation number $\rho>1$. Applying this to the Reeb orbits (i) and (ii), we obtain that $s_1,s_2<0$. Thus $\partial_+\Omega$ has negative slope near its endpoints. The arc $\overline{\partial_+\Omega}$ can never go horizontal or vertical in its interior, because otherwise there would be a Reeb orbit of type (iii) with $\nu=(1,0)$ or $\nu=(0,1)$, so that $\rho=1$. Thus $X_\Omega$ is strictly monotone.
\end{proof}

\section{The first equivariant capacity}
\label{sec:equi}

We now prove Theorem~\ref{thm:arbdim}. (Some related arguments appeared in \cite[Lem.\ 1.19]{GuH}.) If $a_1,\ldots,a_n >0$, define the ``L-shaped domain''
\[
L(a_1,\ldots,a_n) = \left\{\mu\in\R^n_{\ge 0}\;\big|\;\mbox{$\mu_j\le a_j$ for some $j$}\right\}.
\]

\begin{lemma}
\label{lem:c1chl}
If $a_1,\ldots,a_n> 0$, then
\[
c_1^{\op{CH}}\left(X_{L(a_1,\ldots,a_n)}\right) = \sum_{j=1}^na_j.
\]
\end{lemma}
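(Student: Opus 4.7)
The plan is to prove the two inequalities $c_1^{\op{CH}}(X_L)\geq\sum_j a_j$ and $c_1^{\op{CH}}(X_L)\leq\sum_j a_j$ separately, where $L := L(a_1,\ldots,a_n)$.

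For the lower bound, observe that the simplex $\Delta_R := \{\mu\in\R_{\geq 0}^n : \sum_j\mu_j \leq R\}$ is contained in $L$ whenever $R \leq \sum_j a_j$: if some $\mu\in\Delta_R$ had $\mu_k > a_k$ for every $k$, then $\sum_k\mu_k > \sum_k a_k \geq R$, a contradiction. Taking $R = \sum_j a_j$ gives $B^{2n}(\sum_j a_j) = X_{\Delta_R}\subseteq X_L$, so by monotonicity of $c_1^{\op{CH}}$ together with the normalization $c_1^{\op{CH}}(B^{2n}(r))=r$, one obtains $c_1^{\op{CH}}(X_L)\geq\sum_j a_j$.

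For the upper bound --- the more delicate direction --- I would combine the Gutt--Hutchings formula $c_1^{\op{CH}}(X_{\Omega'}) = \min_j\max_{\mu\in\Omega'}\mu_j$, valid for compact convex toric domains \cite{GuH}, with the following Helly-type convex-geometric fact: for every nonempty convex $\Omega'\subseteq L$, $\min_j\sup_{\mu\in\Omega'}\mu_j \leq \sum_\ell a_\ell$. To see this, suppose otherwise and pick $\mu^{(j)}\in\Omega'$ with $\mu^{(j)}_j > \sum_\ell a_\ell$ for each $j$. Setting $t_k := a_k/\sum_\ell a_\ell$ (so $t_k > 0$ and $\sum_k t_k = 1$), the convex combination $\bar\mu := \sum_j t_j\mu^{(j)}\in\Omega'$ satisfies
\[
\bar\mu_k \;\geq\; t_k\mu^{(k)}_k \;>\; t_k\sum_\ell a_\ell \;=\; a_k
\]
for every $k$, so $\bar\mu\notin L$, contradicting $\bar\mu\in\Omega'\subseteq L$. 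Combined with the Gutt--Hutchings formula, this shows $c_1^{\op{CH}}(X_{\Omega'}) \leq \sum_j a_j$ for every convex toric sub-domain $X_{\Omega'}\subseteq X_L$.

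The step I expect to be the main obstacle is transferring this bound from convex toric sub-domains to $X_L$ itself, since a generic compact subset of $X_L$ is not contained in any convex toric sub-domain (by the Helly fact above, convex subsets of $L$ are severely restricted). My plan to overcome this is to work with the cofinal family $\Omega_N := L\cap\{\sum_j\mu_j \leq N\}$ of bounded (non-convex but monotone after smoothing) toric sub-domains exhausting $X_L$ as $N\to\infty$. On each $X_{\Omega_N}$ with $N\geq\sum_j a_j$, the bound $c_1^{\op{CH}}(X_{\Omega_N}) = \sum_j a_j$ should follow either from a direct positive $S^1$-equivariant symplectic homology computation --- in which the low-action edge Reeb orbits along $\{\mu_j=a_j\}$ contribute in cancelling pairs so that the first nontrivial class is carried by the ``corner'' Reeb orbit with normal $(1,\ldots,1)$ and action $\sum_j a_j$ --- or from an extension of the Gutt--Hutchings combinatorial formula to monotone toric domains; passing to the limit $N\to\infty$ then yields the upper bound on $c_1^{\op{CH}}(X_L)$.
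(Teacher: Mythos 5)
Your lower bound is correct and clean: $\Delta^n\bigl(\sum_j a_j\bigr)\subset L(a_1,\ldots,a_n)$, so the ball $B^{2n}\bigl(\sum_j a_j\bigr)$ sits inside $X_{L(a_1,\ldots,a_n)}$ and monotonicity plus normalization gives $c_1^{\op{CH}}\ge\sum_j a_j$. The upper bound, however, is where your argument has a genuine gap, and you have correctly identified it yourself: the Helly-type observation about convex subsets of $L$ is a dead end because $X_L$ is not exhausted by convex toric subdomains, and the two escape routes you propose are not actually available. Route one (a ``direct positive $S^1$-equivariant symplectic homology computation'' for the truncated domains $\Omega_N=L\cap\{\sum_j\mu_j\le N\}$) is not carried out and is a substantial undertaking; the cancellation pattern you gesture at is exactly the kind of thing one would need to prove, not assume. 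Route two (``an extension of the Gutt--Hutchings combinatorial formula to monotone toric domains'') does not exist in \cite{GuH}, and is dangerously close to circular here: the paper's Theorem~\ref{thm:arbdim} about monotone toric domains is itself \emph{deduced from} this lemma. Note also that your truncation destroys the structure you need: $\R^n_{\ge0}\setminus\Omega_N$ is the union of an orthant and a half-space and is not convex, so $X_{\Omega_N}$ is not a concave toric domain and no existing formula applies to it.

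The missing idea is that no truncation is necessary, because $L(a_1,\ldots,a_n)$ is already (a non-compact version of) a \emph{concave} toric domain: its complement in $\R^n_{\ge0}$ is the convex set $(a_1,\infty)\times\cdots\times(a_n,\infty)$. The $k=1$ case of \cite[Thm.~1.14]{GuH} states that for a concave toric domain, $c_1^{\op{CH}}(X_\Omega)=\min\bigl\{\sum_i\mu_i\;\big|\;\mu\in\overline{\partial_+\Omega}\bigr\}$, and the exhaustion argument of \cite[Rmk.~1.3]{GuH} extends this to the non-compact case. For $\Omega=L(a_1,\ldots,a_n)$ the set $\overline{\partial_+\Omega}$ is the boundary of the orthant $\{\mu_j\ge a_j\ \forall j\}$, on which $\sum_i\mu_i$ is minimized exactly at $(a_1,\ldots,a_n)$, giving both inequalities at once. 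So your proposal is recoverable, but only by replacing your entire upper-bound strategy with the observation that the relevant formula from \cite{GuH} already covers $X_L$ directly.
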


\begin{proof}
Observe that
\[
\R^n_{\ge 0}\setminus L(a_1,\ldots,a_n) = (a_1,\infty)\times\cdots\times (a_n,\infty).
\]
is convex. Thus $X_{L(a_1,\ldots,a_n)}$ satisfies all the conditions in the definition of ``concave toric domain'', except that it is not compact.

A formula for $c_k^{\op{CH}}$ of a concave toric domain is given in \cite[Thm.\ 1.14]{GuH}. The $k=1$ case of this formula asserts that if $X_\Omega$ is a concave toric domain in $\R^{2n}$, then
\begin{equation}
\label{eqn:c1concave}
c_1^{\op{CH}}(X_\Omega) = \min\left\{\sum_{i=1}^n\mu_i\;\bigg|\;\mu\in\overline{\partial_+\Omega} \right\}.
\end{equation}
By an exhaustion argument (see \cite[Rmk.\ 1.3]{GuH}), this result also applies to $X_{L(a_1,\ldots,a_n)}$. For $\Omega=L(a_1,\ldots,a_n)$, the minimum in \eqref{eqn:c1concave} is realized by $\mu=(a_1,\ldots,a_n)$.
\end{proof}

\begin{lemma}
\label{lem:monl}
If $X_\Omega$ is a monotone toric domain in $\R^{2n}$ and if $\mu\in\partial_+\Omega$, then $\Omega\subset L(\mu_1,\ldots,\mu_n)$.
\end{lemma}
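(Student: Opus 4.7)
The plan is to argue by contradiction using a path from $\mu$ into the purported offending region. Suppose there exists $\nu\in\Omega$ with $\nu_j>\mu_j$ for all $j$; we want to derive a contradiction from the monotonicity of $\partial_+\Omega$. Since the strict inequalities $\nu_j>\mu_j$ define an open condition and since every neighborhood of $\nu$ meets $\op{int}(\Omega)$, after a small perturbation I can assume $\nu\in\op{int}(\Omega)$ while keeping $\nu_j>\mu_j$ for all $j$. Also note that $\mu\in\partial_+\Omega$ forces $\mu_j>0$, hence $\nu_j>\mu_j>0$, so the segment stays in the positive orthant.

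Next I would consider the straight-line path $\gamma(t) = (1-t)\mu+t\nu$ for $t\in[0,1]$ and the closed set $A = \{t\in[0,1] : \gamma(t)\notin\op{int}(\Omega)\}$. Since $\gamma(0)=\mu\in\partial\Omega$ we have $0\in A$, and since $\op{int}(\Omega)$ is open and contains $\gamma(1)=\nu$, the supremum $t^*:=\sup A$ satisfies $t^*<1$. By closedness of $A$ and openness of $\op{int}(\Omega)$, $\mu^*:=\gamma(t^*)\in\partial\Omega$, and $\gamma(t)\in\op{int}(\Omega)$ for all $t\in(t^*,1]$. Because all components of $\mu^*$ lie strictly between the corresponding components of $\mu$ and $\nu$, they are strictly positive, so $\mu^*\in\partial_+\Omega$.

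The contradiction then follows from a local/normal comparison. Let $v$ be a nonzero outward normal vector to $\partial\Omega$ at $\mu^*$. Since $\gamma(t)\in\op{int}(\Omega)$ for $t$ slightly greater than $t^*$, the derivative $\gamma'(t^*)=\nu-\mu$ must point weakly inward, i.e.,
\[
v\cdot(\nu-\mu) \le 0.
\]
On the other hand, monotonicity of $X_\Omega$ gives $v_j\ge 0$ for all $j$, and since $v\ne 0$ some $v_j>0$; combined with $\nu_j-\mu_j>0$ for all $j$, this yields
\[
v\cdot(\nu-\mu) = \sum_{j=1}^n v_j(\nu_j-\mu_j) > 0,
\]
the desired contradiction.

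The main (minor) subtlety is the perturbation step ensuring $\nu\in\op{int}(\Omega)$, together with verifying $\mu^*\in\partial_+\Omega$ rather than on the coordinate axes so that the monotonicity hypothesis applies; both are handled by the strict inequalities $\nu_j>\mu_j>0$, which propagate to every point on the open segment. Everything else is a direct application of the smooth outward-normal description of $\partial_+\Omega$ and the sign hypothesis defining monotone toric domains.
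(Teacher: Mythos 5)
Your proof is correct, but it takes a genuinely different route from the paper's. The paper first reduces, by an approximation argument, to the strictly monotone case, and then describes $\partial_+\Omega$ globally as the graph of a function $f$ with $\partial_j f<0$, from which the non-existence of a point $\mu'\in\Omega$ with $\mu'_j>\mu_j$ for all $j$ is immediate. You instead work directly with the monotone hypothesis: a first-contact argument along the segment from $\mu$ to a hypothetical offending point $\nu$ produces a boundary point $\mu^*\in\partial_+\Omega$ at which the outward normal $v$ must pair non-positively with $\nu-\mu$, contradicting $v_j\ge 0$, $v\ne 0$, and $\nu_j-\mu_j>0$. Your version has the advantage of avoiding both the approximation step (which requires knowing monotone domains are limits of strictly monotone ones and that the conclusion passes to limits) and the implicit claim that $\partial_+\Omega$ is globally a graph; the cost is only the routine bookkeeping about $\op{int}(\Omega)$ and the last exit time $t^*$. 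One trivial imprecision: if $t^*=0$ then $\mu^*=\mu$ and its coordinates are not \emph{strictly between} those of $\mu$ and $\nu$, but they are still positive since $\mu\in\partial_+\Omega$, so $\mu^*\in\partial_+\Omega$ in all cases and the argument goes through.
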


\begin{proof}
By an approximation argument we can assume without loss of generality that $X_\Omega$ is strictly monotone.  Then $\partial_+\Omega$ is the graph of a positive function $f$ over an open set $U\subset\R_{\ge 0}^{n-1}$ with $\partial_jf<0$ for $j=1,\ldots,n-1$.  It follows that if $(\mu_1',\ldots,\mu_{n-1}')\in U$ and $\mu_j'>\mu_j$ for all $j=1,\ldots,n-1$, then $f(\mu_1',\ldots,\mu_{n-1}') < f(\mu_1,\ldots,\mu_{n-1})$. Consequently $\Omega$ does not contain any point $\mu'$ with $\mu'_j>\mu_j$ for all $j=1,\ldots,n$. This means that $\Omega\subset L(\mu_1,\ldots,\mu_n)$.
Figure \ref{fig:embeddings} illustrates this inclusion for $n=2$. 
\end{proof}

\begin{proof}[Proof of Theorem \ref{thm:arbdim}.]
For $a>0$, consider the simplex
\[
\Delta^n(a) = \left\{\mu\in\R^n_{\ge 0}\;\bigg|\; \sum_{j=1}^n\mu_i\le a\right\}.
\]
Observe that the toric domain $X_{\Delta^n(a)}$ is the ball $B^{2n}(a)$. Now let $a>0$ be the largest real number such that $\Delta^n(a)\subset\Omega$; see Figure \ref{fig:embeddings}.

We have $B^{2n}(a)\subset X_\Omega$, so by definition $a\le c_{\op{Gr}}(X_\Omega)$. Since $c_1^{\op{CH}}$ is a normalized symplectic capacity, $c_{\op{Gr}}(X_\Omega) \le c_1^{\op{CH}}(X_\Omega)$. By the maximality property of $a$, there exists a point $\mu\in\overline{\partial_+\Omega}$ with $\sum_{j=1}^n\mu_j=a$. By an approximation argument we can assume that $\mu\in\partial_+\Omega$. By Lemma~\ref{lem:monl}, $X_\Omega\subset X_{L(\mu_1,\ldots,\mu_n)}$. By the monotonicity of $c_1^{\op{CH}}$ and Lemma~\ref{lem:c1chl}, we then have
\[
c_1^{\op{CH}}(X_\Omega) \le c_1^{\op{CH}}\left(X_{L(\mu_1,\ldots,\mu_n)}\right) = \sum_{j=1}^n\mu_j = a.
\]
Combining the above inequalities gives $c_{\op{Gr}}(X_\Omega) = c_1^{\op{CH}}(X_\Omega) = a$.
\end{proof}

\begin{figure}[ht]
\centering
\begingroup%
  \makeatletter%
  \providecommand\color[2][]{%
    \errmessage{(Inkscape) Color is used for the text in Inkscape, but the package 'color.sty' is not loaded}%
    \renewcommand\color[2][]{}%
  }%
  \providecommand\transparent[1]{%
    \errmessage{(Inkscape) Transparency is used (non-zero) for the text in Inkscape, but the package 'transparent.sty' is not loaded}%
    \renewcommand\transparent[1]{}%
  }%
  \providecommand\rotatebox[2]{#2}%
  \newcommand*\fsize{\dimexpr\f@size pt\relax}%
  \newcommand*\lineheight[1]{\fontsize{\fsize}{#1\fsize}\selectfont}%
  \ifx\svgwidth\undefined%
    \setlength{\unitlength}{184.86328291bp}%
    \ifx\svgscale\undefined%
      \relax%
    \else%
      \setlength{\unitlength}{\unitlength * \real{\svgscale}}%
    \fi%
  \else%
    \setlength{\unitlength}{\svgwidth}%
  \fi%
  \global\let\svgwidth\undefined%
  \global\let\svgscale\undefined%
  \makeatother%
  \begin{picture}(1,0.72915768)%
    \lineheight{1}%
    \setlength\tabcolsep{0pt}%
    \put(0,0){\includegraphics[width=\unitlength,page=1]{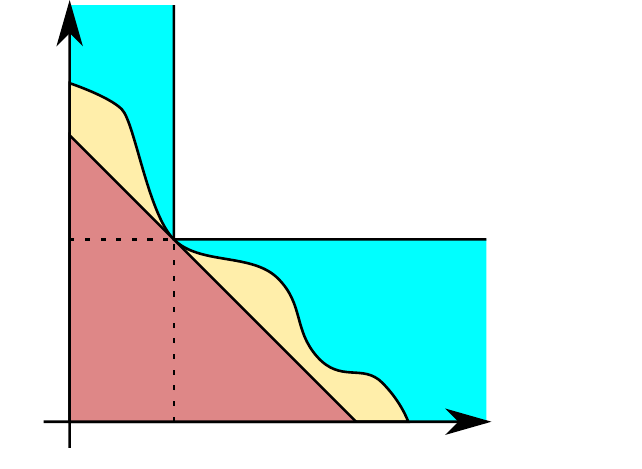}}%
    \put(0.25632024,0.01275753){\color[rgb]{0,0,0}\makebox(0,0)[lt]{\lineheight{1.25}\smash{\begin{tabular}[t]{l}$\mu_1$\end{tabular}}}}%
    \put(-0.00449023,0.35615801){\color[rgb]{0,0,0}\makebox(0,0)[lt]{\lineheight{1.25}\smash{\begin{tabular}[t]{l}$\mu_2$\end{tabular}}}}%
    \put(0.30848245,0.12432648){\color[rgb]{0,0,0}\makebox(0,0)[lt]{\lineheight{1.25}\smash{\begin{tabular}[t]{l}$\Delta^2(a)$\end{tabular}}}}%
    \put(0.51278397,0.23009966){\color[rgb]{0,0,0}\makebox(0,0)[lt]{\lineheight{1.25}\smash{\begin{tabular}[t]{l}$L(\mu_1,\mu_2)$\end{tabular}}}}%
    \put(0.39541917,0.24603788){\color[rgb]{0,0,0}\makebox(0,0)[lt]{\lineheight{1.25}\smash{\begin{tabular}[t]{l}$\Omega$\end{tabular}}}}%
  \end{picture}%
\endgroup%

\caption{The inclusions $\Delta^n(a)\subset \Omega\subset L(\mu_1,\dots,\mu_n)$ for $n=2$}
\label{fig:embeddings}
\end{figure}

\section{ECH capacities}
\label{sec:ech}

We now recall some facts about ECH capacities which we will use to prove Theorem~\ref{thm:4d}.

\begin{definition}
\label{def:wctd}
A {\em weakly convex toric domain\/} in $\R^4$ is a compact toric domain $X_\Omega\subset\R^4$ such that $\Omega$ is convex, and $\overline{\partial_+\Omega}$ is an arc with one endpoint on the positive $\mu_1$ axis and one endpoint on the positive $\mu_2$ axis. See Figure \ref{fig:wconvextoric}.
\end{definition}

\begin{theorem}[Cristofaro-Gardiner \cite{concaveconvex}]
\label{thm:concaveconvex}
In $\R^4$, let $X_\Omega$ be a concave toric domain, and let $X_{\Omega'}$ be a weakly convex toric domain. Then there exists a symplectic embedding $\op{int}(X_\Omega) \underset{s}{\hookrightarrow} X_{\Omega'}$ if and only if $c_k^{\op{ECH}}(X_\Omega) \le c_k^{\op{ECH}}(X_{\Omega'})$ for all $k\ge 0$.
\end{theorem}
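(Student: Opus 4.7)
The forward implication follows readily from the monotonicity of each $c_k^{\op{ECH}}$: if $\op{int}(X_\Omega) \underset{s}{\hookrightarrow} X_{\Omega'}$, then applying monotonicity to $(1-\varepsilon)X_\Omega \underset{s}{\hookrightarrow} X_{\Omega'}$ and letting $\varepsilon \to 0$ gives $c_k^{\op{ECH}}(X_\Omega) \leq c_k^{\op{ECH}}(X_{\Omega'})$ for every $k$. The substantive content is the converse, whose proof assembles three ingredients: a weight decomposition of the source, a dual decomposition of the target, and McDuff's theorem on ball packings into a ball.

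\textbf{Step 1: weight decomposition of the source.} Each concave toric domain $X_\Omega \subset \R^4$ carries a canonical (possibly infinite) \emph{weight sequence} $w(\Omega) = (w_1, w_2, \ldots)$, obtained by inscribing the largest simplex $\Delta^2(w_1) \subset \Omega$ and iterating on the two complementary concave pieces. Two results from the prior literature on ECH capacities of toric domains (Choi--Cristofaro-Gardiner--Frenkel--Hutchings--Ramos) are essential: first, $\op{int}(X_\Omega)$ embeds symplectically into any target $X'$ if and only if the ball packing $\bigsqcup_i B^4(w_i - \varepsilon)$ does for every $\varepsilon > 0$; second, $c_k^{\op{ECH}}(X_\Omega) = c_k^{\op{ECH}}\!\left(\bigsqcup_i B^4(w_i)\right)$ for all $k$. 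This reduces the embedding question to a ball packing into $X_{\Omega'}$ while leaving the source-side obstruction unchanged.

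\textbf{Step 2: dual decomposition of the target.} Weak convexity allows a parallel construction on the target. Choose $a'$ minimal so that $\Omega' \subset \Delta^2(a')$; the complement $\Delta^2(a') \setminus \Omega'$ splits into at most two concave toric regions abutting the coordinate axes, whose weight expansions combine into a sequence $(w'_j)$. Then, up to interior symplectomorphism and sets of measure zero, $B^4(a') = X_{\Delta^2(a')}$ decomposes as $X_{\Omega'}$ together with balls $B^4(w'_j)$. Consequently, a packing $\bigsqcup_i B^4(w_i) \hookrightarrow X_{\Omega'}$ exists if and only if the extended packing $\bigsqcup_i B^4(w_i) \sqcup \bigsqcup_j B^4(w'_j) \hookrightarrow B^4(a')$ does.

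\textbf{Step 3 and main obstacle: McDuff plus combinatorial matching.} By McDuff's theorem that symplectic embeddings of disjoint unions of balls into a ball are obstructed only by ECH capacities, the extended packing in Step 2 exists if and only if
\[
c_k^{\op{ECH}}\!\left(\bigsqcup_i B^4(w_i) \sqcup \bigsqcup_j B^4(w'_j)\right) \leq c_k^{\op{ECH}}(B^4(a')) \quad \text{for all } k.
\]
The crux, and the principal technical obstacle, is to verify that after expanding disjoint-union ECH capacities via the ``sum-of-max'' formula, these inequalities rearrange precisely to $c_k^{\op{ECH}}\!\left(\bigsqcup_i B^4(w_i)\right) \leq c_k^{\op{ECH}}(X_{\Omega'})$ for every $k$, which by Step 1 is the hypothesis. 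Equivalently, one must show that the dual weight sequence $(w'_j)$ faithfully encodes the deficit $c_k^{\op{ECH}}(B^4(a')) - c_k^{\op{ECH}}(X_{\Omega'})$ at every level $k$. Weak convexity of $X_{\Omega'}$ enters here in an essential way: it is precisely what guarantees that each complementary region is a concave toric domain so that its weight sequence and the needed ball-packing identities behave correctly.
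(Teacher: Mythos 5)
This theorem is not proved in the paper: it is quoted as a black box from Cristofaro-Gardiner \cite{concaveconvex}, so there is no in-paper argument to compare yours against. Measured against the proof in that reference, your outline does reproduce its architecture --- weight decomposition of the concave source, dual weight decomposition of the (weakly) convex target, reduction to a ball packing of $B^4(a')$, and McDuff's theorem that ball packings of a ball are governed by ECH capacities --- and your forward implication via monotonicity is fine. But as written the proposal is a road map in which the genuinely hard steps are asserted rather than proved, and it locates the difficulty in the wrong place.

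The combinatorial matching you single out in Step 3 as ``the principal technical obstacle'' is actually the easy part: using the disjoint-union formula \eqref{eqn:ckechunion}, the condition $c_k^{\op{ECH}}\bigl(\coprod_i B^4(w_i)\sqcup\coprod_j B^4(w'_j)\bigr)\le c_k^{\op{ECH}}(B^4(a'))$ for all $k$ unwinds directly to $c_m^{\op{ECH}}\bigl(\coprod_i B^4(w_i)\bigr)\le\inf_{l\ge 0}\bigl\{c_{m+l}^{\op{ECH}}(B^4(a'))-c_l^{\op{ECH}}\bigl(\coprod_j B^4(w'_j)\bigr)\bigr\}$ for all $m$, and the right-hand side equals $c_m^{\op{ECH}}(X_{\Omega'})$ by Theorem~\ref{thm:weaklyconvex}. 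The real gaps are the two geometric equivalences you state without justification. First, the ``if'' direction of Step 2: an arbitrary packing $\coprod_i B^4(w_i)\sqcup\coprod_j B^4(w'_j)\underset{s}{\hookrightarrow} B^4(a')$ need not place the balls $B^4(w'_j)$ in the complementary triangles, and your appeal to a decomposition ``up to sets of measure zero'' does not let you rearrange it so that it does; this requires McDuff's theorem on connectedness of the space of ball packings of a ball, and even after that isotopy one only obtains a packing of the $B^4(w_i)$ into $X_{\Omega'}$, not an embedding of $\op{int}(X_\Omega)$. Second, the hard direction of your Step 1 --- that a packing of the weight balls into a target can be upgraded to an embedding of $\op{int}(X_\Omega)$ itself --- is not a formal consequence of the triangle decomposition (the Traynor trick only gives the opposite inclusion); it is itself a substantial theorem, proved in \cite{concaveconvex} by the same ball-packing machinery and not for an arbitrary target $X'$ as you claim. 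Until these two equivalences are supplied with proofs or precise citations, the proposal is a correct outline of the known argument rather than a proof.
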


To make use of this theorem, we need some formulas to compute the ECH capacities $c_k^{\op{ECH}}$. To start, consider a 4-dimensional concave toric domain $X_{\Omega}$. Associated to $X_\Omega$ is a ``weight sequence'' $W(X_\Omega)$, which is a finite or countable multiset of positive real numbers defined in \cite{concave}, see also \cite{bidisk}, as follows. Let $r$ be the largest positive real number such that the triangle $\Delta^2(r)\subset\Omega$. We can write $\Omega\setminus \Delta^2(r)=\widetilde{\Omega}_1\sqcup\widetilde{\Omega}_2$, where $\widetilde{\Omega}_1$ does not intersect the $\mu_2$-axis and $\widetilde{\Omega}_2$ does not intersect the $\mu_1$-axis. It is possible that $\widetilde{\Omega}_1$ and/or $\widetilde{\Omega}_2$ is empty. After translating the closures of $\widetilde{\Omega}_1$ or $\widetilde{\Omega}_2$ by $(-r,0)$ and $(0,-r)$ and multiplying them by the matrices $\begin{bmatrix}1&1\\0&1\end{bmatrix}$ and $\begin{bmatrix}1&0\\1&1\end{bmatrix}$, respectively, we obtain two new domains $\Omega_1$ and $\Omega_2$ in $\R_{\ge 0}^2$ such that $X_{\Omega_1}$ and $X_{\Omega_2}$ are concave toric domains. We then inductively define
\begin{equation}
\label{eqn:weights}
W(X_\Omega) = (r) \cup W(X_{\Omega_1}) \cup W(X_{\Omega_2}),
\end{equation}
where `$\cup$' denotes the union of multisets, and the term $W(X_{\Omega_i})$ is omitted if $\Omega_i$ is empty.

Let us call two subsets of $\R^2$ ``affine equivalent'' if one can be obtained from the other by the composition of a translation and an element of $\op{GL}(2,\Z)$. If $W(X_{\Omega})=(a_1,a_2,\ldots)$, then the domain $\Omega$ is canonically decomposed into triangles, which are affine equivalent to the triangles $\Delta^2(a_1), \Delta^2(a_2), \ldots$ and which meet only along their edges; the first of these triangles is $\Delta^2(r)$. See \cite[\S3.1]{ruelle} for more details. We now recall the ``Traynor trick'':

\begin{proposition}
\label{prop:traynor}
\cite{traynor}
If $T\subset\R^2_{\ge 0}$ is a triangle affine equivalent to $\Delta^2(a)$, then there is a symplectic embedding $\op{int}(B^4(a))\underset{s}{\hookrightarrow} X_{\op{int}(T)}$.
\end{proposition}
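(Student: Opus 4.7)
The strategy is to decompose the affine equivalence. Write $T = A\cdot\Delta^2(a) + v$ for some $A\in\op{GL}(2,\Z)$ and $v\in\R^2$, and construct the desired embedding as a composition of two moves on the toric moment polytope: a toric symplectomorphism realizing the $\op{GL}(2,\Z)$-action of $A$, and a symplectic embedding realizing the translation by $v$. Note that when $T=\Delta^2(a)$, $X_T$ is literally $B^4(a)$ by definition, so the content is entirely in these two moves.

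The first move uses action--angle coordinates on the open dense subset $(\C^*)^2\subset\C^2$: setting $I_j = \pi|z_j|^2$ and $\theta_j = \arg z_j$, the standard symplectic form becomes $\omega = \sum_j dI_j\wedge d\theta_j$. Because $A\in\op{GL}(2,\Z)$ preserves the lattice $\Z^2$, the formula $(I,\theta)\mapsto(AI,(A^T)^{-1}\theta)$ descends to a well-defined symplectomorphism $\Phi_A$ of $(\C^*)^2$; a direct computation using $A^{-1}\cdot A = \op{Id}$ verifies $\Phi_A^*\omega = \omega$. By construction $\Phi_A$ carries $X_\Omega\cap(\C^*)^2$ to $X_{A\Omega}\cap(\C^*)^2$ for every open $\Omega\subset\R^2_{>0}$, and in particular transports the interior part of $X_{\Delta^2(a)}$ onto that of $X_{A\Delta^2(a)}$.

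The second move is the heart of the Traynor trick. The naive candidate $(I,\theta)\mapsto(I+v,\theta)$ is a symplectomorphism of $T^*T^2$ but does not extend to $\C^2$, since it changes how the moment image meets the coordinate axes. Traynor's construction replaces it by an explicit compactly supported Hamiltonian isotopy on $\C^2$: for any $\delta>0$, one symplectically embeds $\op{int}(B^4(a-\delta))$ into an arbitrarily small tubular neighborhood of a prescribed Lagrangian torus $\{\pi(|z_1|^2,|z_2|^2)=\mu_0\}\subset X_{\op{int}(T)}$, where $\mu_0$ is a point chosen in the interior of $T$ with enough slack to accommodate the image. Composing with $\Phi_A$ and letting $\delta\to 0$ via the obvious exhaustion of $\op{int}(B^4(a))$ by the closed balls $B^4(a-\delta)$ produces the required embedding.

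The main technical obstacle is that $\op{int}(B^4(a))$ contains the coordinate axes $\{z_j=0\}$, where action--angle coordinates degenerate and $\Phi_A$ is \emph{a priori} undefined; the combined map must nevertheless extend smoothly across these axes. Traynor circumvents this by realizing the entire composition through a single Hamiltonian on $\C^2$ that is smooth everywhere and compactly supported, with a cutoff chosen so that, on each $B^4(a-\delta)$, the time-$1$ flow coincides with $\Phi_A$ followed by the required action--coordinate translation in the bulk while behaving as a standard Darboux chart near the axes. This is the essential technical content of \cite{traynor}, to which we refer for the explicit formulas.
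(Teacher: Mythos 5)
The paper offers no proof of this proposition beyond the citation to \cite{traynor}, and your skeleton --- factor the affine equivalence into an element $A\in\op{GL}(2,\Z)$ and a translation, and realize the linear part by $(I,\theta)\mapsto(AI,(A^{T})^{-1}\theta)$ on $(\C^*)^2$ in action--angle coordinates --- is the right way to begin. But the step that carries all the content is false as you state it: you claim that $\op{int}(B^4(a-\delta))$ embeds symplectically into an \emph{arbitrarily small} tubular neighborhood of a Lagrangian torus $\{\pi(|z_1|^2,|z_2|^2)=\mu_0\}$. A tubular neighborhood of size $\epsilon$ is symplectomorphic to $\{|p|<\epsilon\}\times T^2\subset T^*T^2$, which embeds into $Z^4(2\epsilon+\delta')$ for any $\delta'>0$ (the annulus factor $(\R/\Z)\times(-\epsilon,\epsilon)$ embeds area-preservingly into a disk of area slightly more than $2\epsilon$), so by nonsqueezing its Gromov width tends to $0$ with $\epsilon$ and no ball of fixed capacity fits inside it. What makes the Traynor trick work is not smallness of the target but its integral-affine \emph{shape}: the key lemma is that $\op{int}(B^4(a))$ symplectically embeds into the product $\op{int}(\Delta^2(a))\times(0,1)^2\subset\R^2\times T^2\cong(\C^*)^2$ with $\omega=\sum_j dI_j\wedge d\theta_j$, filling its full volume. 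This is proved by taking the product of two explicit area-preserving maps of the open disk onto thin rectangular regions, each satisfying a monotonicity property bounding the $I$-coordinate of the image by the action $\pi|z_j|^2$ of the source point. Once the ball has been moved into $(\C^*)^2$ in this normal form, the integral affine map $(I,\theta)\mapsto(AI+v,(A^{T})^{-1}\theta)$ descends to $\R^2\times T^2$ and carries $\op{int}(\Delta^2(a))\times T^2$ onto $X_{\op{int}(T)}\cap(\C^*)^2\subset X_{\op{int}(T)}$; the degeneration of action--angle coordinates along the axes never arises because the domain of the composite map avoids them entirely.

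Two further gaps. First, constructing embeddings of $B^4(a-\delta)$ for every $\delta>0$ and ``letting $\delta\to0$'' does not by itself produce an embedding of $\op{int}(B^4(a))$: the embeddings for different $\delta$ need not cohere, so you must either arrange the images to be nested or prove the open-ball statement directly, as the lemma above does. Second, your order of operations --- apply $\Phi_A$ to (part of) $B^4(a)$ first, then translate --- applies $\Phi_A$ precisely where it is undefined, namely on the locus where the ball meets the coordinate axes; the repair is to perform the disk-to-rectangle normalization \emph{first}, so that the affine transformation is only ever applied to a subset of $(\C^*)^2$. Your closing appeal to ``a single Hamiltonian on $\C^2$ behaving as a Darboux chart near the axes'' is not how \cite{traynor} resolves this, and deferring the entire analytic content to the reference while misstating what that content is leaves the proof incomplete.
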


\noindent
As a result, there is a symplectic embedding
\[
\coprod_i\op{int}(B^4(a_i))\subset X_\Omega.
\]
Consequently, by the monotonicity property of ECH capacities, we have
\begin{equation}
\label{eqn:echconcave}
c_k^{\op{ECH}}\left(\coprod_i\op{int}(B^4(a_i))\right) \le c_k^{\op{ECH}}(X_\Omega).
\end{equation}

\begin{theorem}[\cite{concave}]
\label{thm:concave}
If $X_\Omega$ is a four-dimensional concave toric domain with weight expansion $W(X_\Omega)=(a_1,a_2,\ldots)$, then equality holds in \eqref{eqn:echconcave}.
\end{theorem}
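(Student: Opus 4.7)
The inequality $\leq$ in \eqref{eqn:echconcave} has already been established via the Traynor trick and monotonicity, so only the reverse inequality
\[
c_k^{\op{ECH}}(X_\Omega) \le c_k^{\op{ECH}}\left(\coprod_i\op{int}(B^4(a_i))\right)
\]
needs to be proved. My approach is to compute both sides using the known combinatorial formulas for ECH capacities and verify that they agree.

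First I would rewrite the right-hand side. The disjoint union formula gives $c_k^{\op{ECH}}(X_1 \sqcup X_2) = \max_{k_1+k_2=k}\bigl[c_{k_1}^{\op{ECH}}(X_1) + c_{k_2}^{\op{ECH}}(X_2)\bigr]$, and for a ball $c_k^{\op{ECH}}(B^4(a)) = a\cdot d(k)$, where $d(k)$ is the unique nonnegative integer satisfying $d(d+1) \le 2k \le d(d+3)$. Hence
\[
c_k^{\op{ECH}}\left(\coprod_i\op{int}(B^4(a_i))\right) = \max\left\{\sum_i a_i\, d(k_i)\;\bigg|\;k_i\ge 0,\;\sum_i k_i = k\right\}.
\]
For the left-hand side, I would invoke the combinatorial description of the ECH chain complex of $\partial X_\Omega$ in terms of ``convex generators'', which for concave toric domains gives a formula expressing $c_k^{\op{ECH}}(X_\Omega)$ as the minimum of an $\ell^1$-type action functional over convex generators whose total ECH index equals $2k$.

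Next I would argue by induction on the weight expansion \eqref{eqn:weights}. The base case is $\Omega = \Delta^2(a_1)$, where $X_\Omega = B^4(a_1)$ and both sides reduce to $a_1 d(k)$. For the inductive step, use the decomposition $\Omega = \Delta^2(r) \cup \overline{\widetilde{\Omega}_1} \cup \overline{\widetilde{\Omega}_2}$. A convex generator on $\partial X_\Omega$ of index $2k$ realizing $c_k^{\op{ECH}}(X_\Omega)$ should decompose into sub-generators supported on the boundaries of the three subregions, whose indices sum to $2k$ and whose actions sum to the total. The affine equivalences $\widetilde{\Omega}_i \cong \Omega_i$ lie in $\op{GL}(2,\Z)$ composed with a translation, so they preserve both the action and the ECH index; applying the induction hypothesis to $X_{\Omega_1}$ and $X_{\Omega_2}$, combined with the base case for $X_{\Delta^2(r)}$, yields the desired bound via the disjoint-union formula.

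The main obstacle is verifying that the ECH index really is additive under this decomposition of convex generators across the corners where $\Delta^2(r)$ meets $\widetilde{\Omega}_1$ and $\widetilde{\Omega}_2$. This requires a careful analysis of the writhe and linking contributions produced by the pairs of orbits straddling these corners, and constitutes the technical heart of the argument in \cite{concave}.
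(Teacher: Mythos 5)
This theorem is not proved in the paper: it is imported verbatim from \cite{concave}, so there is no internal argument to compare your proposal against, and the relevant benchmark is the proof in that reference. Your outline does capture its broad shape. The inequality \eqref{eqn:echconcave} (the lower bound for $c_k^{\op{ECH}}(X_\Omega)$) comes from the Traynor trick plus monotonicity, and the remaining upper bound is obtained from a combinatorial model for the ECH of $\partial X_\Omega$ together with an induction on the weight expansion \eqref{eqn:weights}, which reduces to a subadditivity-type estimate
\[
c_k^{\op{ECH}}(X_\Omega)\;\le\;\max_{k_0+k_1+k_2=k}\Bigl(c_{k_0}^{\op{ECH}}\bigl(B^4(r)\bigr)+c_{k_1}^{\op{ECH}}\bigl(X_{\Omega_1}\bigr)+c_{k_2}^{\op{ECH}}\bigl(X_{\Omega_2}\bigr)\Bigr),
\]
after which the disjoint union formula \eqref{eqn:ckechunion} and the inductive hypothesis close the argument.

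As written, however, your proposal has a genuine gap precisely at this step, and the mechanism you offer for it does not work in the form stated. First, a generator of the combinatorial complex for $\partial X_\Omega$ (a lattice path, or multiset of orbits labelled by rational-slope points of $\partial_+\Omega$) admits no canonical splitting into ``sub-generators supported on the boundaries of the three subregions'': an action-minimizing generator of index $2k$ need not pass through the corners where $\Delta^2(r)$ meets $\overline{\widetilde{\Omega}_1}$ and $\overline{\widetilde{\Omega}_2}$, so one must compare actions and indices before and after peeling off $\Delta^2(r)$ via inequalities rather than an exact additive decomposition. Second, the affine equivalences in \eqref{eqn:weights} are compositions of elements of $\op{GL}(2,\Z)$ with translations; the ECH index is invariant under these, but the action functional (built from quantities of the form $\mu\cdot\nu$) is \emph{not} translation-invariant, so your claim that the affine equivalences ``preserve both the action and the ECH index'' is false as stated, and tracking how the action transforms (and how the discrepancy is absorbed by the $\Delta^2(r)$ piece) is part of what must be checked. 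Third, the ``writhe and linking'' analysis you identify as the technical heart is not where the difficulty of this theorem lies: that analysis belongs to the construction of the combinatorial model itself (already carried out in \cite{qech} and its antecedents), whereas the content here is the purely combinatorial identity between the minimal action in grading $2k$ and the ball-packing quantity on the right-hand side of \eqref{eqn:echconcave}. In short, your plan names the correct ingredients but asserts, rather than proves, the one inequality that constitutes the theorem.
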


To make this more explicit, we know from \cite{qech} that\footnote{For the sequence of numbers $a_i$ coming from a weight expansion, or for any finite sequence, the supremum in \eqref{eqn:ckechunion} is achieved, so we can write `max' instead of `sup'.}
\begin{equation}
\label{eqn:ckechunion}
c_k^{\op{ECH}}\left(\coprod_i\op{int}(B^4(a_i))\right) = \sup_{k_1+\cdots = k}\sum_ic_{k_i}^{\op{ECH}}(\op{int}(B^4(a_i)))
\end{equation}
and
\begin{equation}
\label{eqn:ckechball}
c_k^{\op{ECH}}(\op{int}(B^4(a))) = c_k^{\op{ECH}}(B^4(a)) = da,
\end{equation}
where $d$ is the unique nonnegative integer such that
\[
d^2+d \le 2k \le d^2+3d.
\]

To state the next lemma, given $a_1,a_2>0$, define the polydisk
\[
P(a_1,a_2) = \left\{z\in\C^2\;\bigg|\;\pi|z_1|^2\le a_1, \; \pi|z_2|^2\le a_2\right\}.
\]
This is a convex toric domain $X_{\Omega'}$ where $\Omega'$ is a rectangle of side lengths $a_1$ and $a_2$.

\begin{lemma}
\label{lem:folding}
Let $X_\Omega$ be a four-dimensional concave toric domain. Let $(a,0)$ and $(0,b)$ be the points where $\overline{\partial_+\Omega}$ intersects the axes. Let $\mu$ be a point on $\overline{\partial_+\Omega}$ minimizing $\mu_1+\mu_2$, and write $r=\mu_1+\mu_2$. Then there exists a symplectic embedding
\[
\op{int}(X_\Omega) \underset{s}{\hookrightarrow} P(r,\max(b,a-r)).
\]
\end{lemma}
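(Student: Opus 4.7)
The plan is to apply the Cristofaro-Gardiner embedding criterion (Theorem~\ref{thm:concaveconvex}): since the polydisk $P(r, s)$ with $s = \max(b, a - r)$ is a weakly convex toric domain in the sense of Definition~\ref{def:wctd}, the existence of the desired symplectic embedding is equivalent to the ECH capacity inequality
\[
c_k^{\op{ECH}}(X_\Omega) \le c_k^{\op{ECH}}(P(r, s)) \qquad \text{for all } k \ge 0.
\]
By Theorem~\ref{thm:concave}, the left-hand side equals $c_k^{\op{ECH}}\bigl(\coprod_i \op{int}(B^4(a_i))\bigr)$, where $(a_i) = W(X_\Omega)$ is the weight expansion of $\Omega$. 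So by monotonicity of ECH capacities and the Traynor trick (Proposition~\ref{prop:traynor}), it is enough to place affine-$\op{GL}(2, \Z)$-images of the weight triangles $T_i \subset \Omega$ disjointly inside the rectangle $[0, r] \times [0, s]$.

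To construct such a placement, I would decompose $\Omega$ along the tangent diagonal $\mu_1 + \mu_2 = r$ at $\mu^* = (\mu_1^*, \mu_2^*)$, obtaining the triangle $\Delta^2(r)$ together with the two wings $\widetilde{\Omega}_1, \widetilde{\Omega}_2$; the standard weight-expansion transformations map these to concave toric regions $\Omega_1 \subset [0, a - r] \times [0, \mu_2^*]$ and $\Omega_2 \subset [0, \mu_1^*] \times [0, b - r]$. Place $\Delta^2(r)$ as the lower-left triangle $\{\mu_1 + \mu_2 \le r\}$ of the rectangle. In the case $s = b$, stack the weight triangles tiling $\Omega_2$ directly above $\Delta^2(r)$ in the strip $[0, \mu_1^*] \times [r, b]$, and dispose of the triangles tiling $\Omega_1$ in the remaining region $[\mu_1^*, r] \times [\mu_2^*, s]$, after swapping coordinates so their long direction fits inside $s$. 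The case $s = a - r$ is handled symmetrically by exchanging the roles of $\Omega_1$ and $\Omega_2$.

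The main obstacle will be verifying that this placement actually fits disjointly inside $[0, r] \times [0, s]$, since a naive ``fit the bounding polydisks'' argument can fail: the bounding boxes of $\Omega_1, \Omega_2$ may themselves overflow the rectangle when $a-r$ is large compared to $\mu_2^*$ (or $b - r$ large compared to $\mu_1^*$). In such cases I would proceed by induction on the total number of nonzero weights in $W(X_\Omega)$: the lemma applied to $\Omega_1$ and $\Omega_2$ themselves produces tighter polydisk targets $P(r_i, s_i)$ that \emph{can} be positioned in the complementary $L$-shaped region of $[0, r] \times [0, s]$. The base case is $\Omega = \Delta^2(c)$, where $r = a = b = c$ and the embedding $B^4(c) \subset P(c, c)$ is immediate. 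The identity $s = \max(b, a - r)$, together with $\mu_1^* + \mu_2^* = r$ and $a, b \ge r$, is precisely what is needed for the recursive bounds to close up and the inductive packing to succeed.
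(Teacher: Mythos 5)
Your reduction is the same as the paper's: invoke Theorem~\ref{thm:concaveconvex} to replace the embedding problem by the inequality $c_k^{\op{ECH}}(X_\Omega)\le c_k^{\op{ECH}}(P(r,s))$, then use Theorem~\ref{thm:concave} plus the Traynor trick (Proposition~\ref{prop:traynor}) to reduce further to packing the weight triangles of $\Omega$ disjointly into the rectangle $[0,r]\times[0,s]$. The gap is in the packing itself, which is the actual content of the lemma. Your proposed placement (stacking the triangles of $\Omega_2$ in $[0,\mu_1^*]\times[r,b]$ and fitting a swapped copy of $\Omega_1$ into $[\mu_1^*,r]\times[\mu_2^*,s]$) does not fit in general, as you concede, and the fallback does not close the gap: inducting ``on the total number of nonzero weights'' is not available, since the weight sequence of a concave toric domain is typically countably infinite; and the assertion that the recursive polydisk targets $P(r_i,s_i)$ ``can be positioned in the complementary $L$-shaped region'' is exactly the kind of quantitative claim that needs proof and can fail for bounding boxes (e.g.\ when $b-r$ is small compared to $\mu_1^*$ the box $[0,r_2]\times[0,s_2]$ must first be rotated to fit, and one must then still account for $\Omega_1$). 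As written, the heart of the lemma is unproved.

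The paper's packing requires no induction and no re-packing of the wings. Because $\R^2_{\ge0}\setminus\Omega$ is convex, the boundary arc is the graph of a convex function, so the upper wing $\widetilde\Omega_2$ is already contained in the triangle with vertices $(0,r)$, $(\mu_1,\mu_2)$, $(0,b)$, and the right wing $\widetilde\Omega_1$ is contained in the triangle with vertices $(\mu_1,\mu_2)$, $(r,0)$, $(a,0)$. One leaves $\widetilde\Omega_2$ (and all its weight triangles) exactly where it is, inside $[0,\mu_1]\times[0,b]$, and applies a single integral shear fixing the line $\mu_1+\mu_2=r$ that carries the second containing triangle to the one with vertices $(\mu_1,\mu_2)$, $(r,0)$, $(r,a-r)$, hence into $[\mu_1,r]\times[0,a-r]$. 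The two regions meet only along the line $\mu_1=\mu_1^*$ and both lie on or above the diagonal, so together with $\Delta^2(r)$ they tile a subset of $[0,r]\times[0,\max(b,a-r)]$ with pairwise disjoint interiors. If you replace your placement scheme by this observation, the rest of your argument goes through verbatim.
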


\begin{proof}
One might hope for a direct construction using some version of ``symplectic folding'' \cite{folding}, but we will instead use the above ECH machinery. By Theorem~\ref{thm:concaveconvex}, it is enough to show that
\begin{equation}
\label{eqn:ccns}
c_k^{\op{ECH}}(X_\Omega) \le c_k^{\op{ECH}}(P(r,\max(b,a-r))
\end{equation}
for each nonnegative integer $k$. 

Consider the weight expansion $W(X_\Omega)=(a_1,a_2,\ldots)$ where $a_1=r$. The decomposition of $\Omega$ into triangles corresponding to the weight expansion consists of the triangle $\Delta^2(r)$, plus some additional triangles in the triangle with corners $ (0,r), (\mu_1,\mu_2), (0,b)$, plus some additional triangles in the triangle with corners $(\mu_1,\mu_2), (r,0), (a,0)$; see Figure~\ref{fig:weight}. The latter triangle is affine equivalent to the triangle with corners $(\mu_1,\mu_2), (r,0), (r,a-r)$; see Figure \ref{fig:pack}. This allows us to pack triangles affine equivalent to $\Delta^2(a_1), \Delta^2(a_2), \ldots$ into the rectangle with horizontal side length $r$ and vertical side length $\max(b,a-r)$. Thus by the Traynor trick, we have a symplectic embedding
\[
\coprod_i\op{int}(B(a_i)) \underset{s}{\hookrightarrow} P(r,\max(b,a-r)).
\]
Then Theorem~\ref{thm:concave} and the monotonicity of ECH capacities imply \eqref{eqn:ccns}.
\end{proof}

	\begin{figure}[ht]
	 \subfloat[Weights of $X_\Omega$]{
	\begin{minipage}[c][0.8\width]{0.4\textwidth}
	\centering
\begingroup%
  \makeatletter%
  \providecommand\color[2][]{%
    \errmessage{(Inkscape) Color is used for the text in Inkscape, but the package 'color.sty' is not loaded}%
    \renewcommand\color[2][]{}%
  }%
  \providecommand\transparent[1]{%
    \errmessage{(Inkscape) Transparency is used (non-zero) for the text in Inkscape, but the package 'transparent.sty' is not loaded}%
    \renewcommand\transparent[1]{}%
  }%
  \providecommand\rotatebox[2]{#2}%
  \newcommand*\fsize{\dimexpr\f@size pt\relax}%
  \newcommand*\lineheight[1]{\fontsize{\fsize}{#1\fsize}\selectfont}%
  \ifx\svgwidth\undefined%
    \setlength{\unitlength}{118.28041711bp}%
    \ifx\svgscale\undefined%
      \relax%
    \else%
      \setlength{\unitlength}{\unitlength * \real{\svgscale}}%
    \fi%
  \else%
    \setlength{\unitlength}{\svgwidth}%
  \fi%
  \global\let\svgwidth\undefined%
  \global\let\svgscale\undefined%
  \makeatother%
  \begin{picture}(1,1.07849154)%
    \lineheight{1}%
    \setlength\tabcolsep{0pt}%
    \put(0,0){\includegraphics[width=\unitlength,page=1]{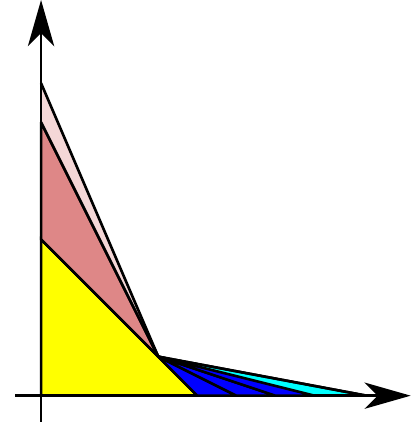}}%
    \put(0.00485624,0.8442514){\color[rgb]{0,0,0}\makebox(0,0)[lt]{\lineheight{1.25}\smash{\begin{tabular}[t]{l}$b$\end{tabular}}}}%
    \put(0.83949943,0.02231384){\color[rgb]{0,0,0}\makebox(0,0)[lt]{\lineheight{1.25}\smash{\begin{tabular}[t]{l}$a$\end{tabular}}}}%
    \put(0,0){\includegraphics[width=\unitlength,page=2]{weight.pdf}}%
    \put(0.35360373,0.01993904){\color[rgb]{0,0,0}\makebox(0,0)[lt]{\lineheight{1.25}\smash{\begin{tabular}[t]{l}$\mu_1$\end{tabular}}}}%
    \put(-0.00701788,0.1784607){\color[rgb]{0,0,0}\makebox(0,0)[lt]{\lineheight{1.25}\smash{\begin{tabular}[t]{l}$\mu_2$\end{tabular}}}}%
    \put(0.00485624,0.46379959){\color[rgb]{0,0,0}\makebox(0,0)[lt]{\lineheight{1.25}\smash{\begin{tabular}[t]{l}$r$\end{tabular}}}}%
  \end{picture}%
\endgroup%
\label{fig:weight}
	\end{minipage}}
	\hfill
	\subfloat[Ball packing into a polydisk]{
	\begin{minipage}[c][0.8\width]{0.4\textwidth}
	\centering
\begingroup%
  \makeatletter%
  \providecommand\color[2][]{%
    \errmessage{(Inkscape) Color is used for the text in Inkscape, but the package 'color.sty' is not loaded}%
    \renewcommand\color[2][]{}%
  }%
  \providecommand\transparent[1]{%
    \errmessage{(Inkscape) Transparency is used (non-zero) for the text in Inkscape, but the package 'transparent.sty' is not loaded}%
    \renewcommand\transparent[1]{}%
  }%
  \providecommand\rotatebox[2]{#2}%
  \newcommand*\fsize{\dimexpr\f@size pt\relax}%
  \newcommand*\lineheight[1]{\fontsize{\fsize}{#1\fsize}\selectfont}%
  \ifx\svgwidth\undefined%
    \setlength{\unitlength}{133.97340225bp}%
    \ifx\svgscale\undefined%
      \relax%
    \else%
      \setlength{\unitlength}{\unitlength * \real{\svgscale}}%
    \fi%
  \else%
    \setlength{\unitlength}{\svgwidth}%
  \fi%
  \global\let\svgwidth\undefined%
  \global\let\svgscale\undefined%
  \makeatother%
  \begin{picture}(1,0.9735198)%
    \lineheight{1}%
    \setlength\tabcolsep{0pt}%
    \put(0,0){\includegraphics[width=\unitlength,page=1]{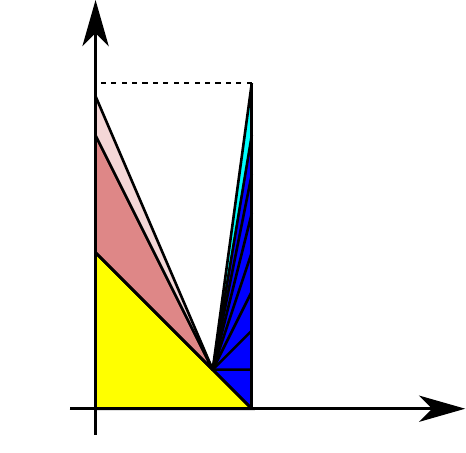}}%
    \put(-0.00619584,0.81338821){\color[rgb]{0,0,0}\makebox(0,0)[lt]{\lineheight{1.25}\smash{\begin{tabular}[t]{l}$a-r$\end{tabular}}}}%
    \put(0.11525507,0.72716283){\color[rgb]{0,0,0}\makebox(0,0)[lt]{\lineheight{1.25}\smash{\begin{tabular}[t]{l}$b$\end{tabular}}}}%
    \put(0.5132913,0.01097029){\color[rgb]{0,0,0}\makebox(0,0)[lt]{\lineheight{1.25}\smash{\begin{tabular}[t]{l}$r$\end{tabular}}}}%
  \end{picture}%
\endgroup%
\label{fig:pack}
	\end{minipage}}
	\caption{Embedding a concave toric domain into a polydisk}
	\end{figure}

\begin{proof}[Proof of Theorem \ref{thm:4d}]
Let $r$ be the largest positive real number such that $\Delta^2(r)\subset\Omega$. We have $B^4(r)\subset X_\Omega$, so $r\le c_{\op{Gr}}(X_\Omega)$, and we just need to show that $c_Z(X_\Omega)\le r$.

Let $\mu$ be a point on $\partial_+\Omega$ such that $\mu_1+\mu_2=r$. By an approximation argument, we can assume that $X_\Omega$ is strictly monotone, so that the tangent line to $\partial_+\Omega$ at $\mu$ is not horizontal or vertical. Then we can find $a,b>r$ such that $\Omega$ is contained in the quadrilateral with vertices $(0,0)$, $(a,0)$, $(\mu_1,\mu_2)$, and $(0,b)$. It then follows from Lemma~\ref{lem:folding} that there exists a symplectic embedding $\op{int}(X_\Omega)\underset{s}{\hookrightarrow} P(r,R)$ for some $R>0$. Since $P(r,R)\subset Z^4(r)$, it follows that $c_Z(X_\Omega) \le r$.
\end{proof}

\section{A family of non-monotone toric examples}
\label{sec:ife}

We now study a family of examples of non-monotone toric domains, and we determine when they satisfy the conclusions of Conjecture \ref{conj:V} or Conjecture \ref{eq:viterbo-conj}.

For $0<a<1/2$, let $\Omega_a$ be the convex polygon with corners $(0,0)$, $(1-2a,0)$, $(1-a,a)$, $(a,1-a)$ and $(0,1-2a)$, and write $X_a=X_{\Omega_a}$; see Figure \ref{fig:example}. Then $X_a$ is a weakly convex (but not monotone) toric domain.

\begin{proposition}
\label{prop:ife}
Let $0< a<1/2$. Then the Gromov width and cylindrical capacity of $X_a$ are given by
\begin{align}
\label{eqn:cgrxa}
c_{\op{Gr}}(X_a) &= \min(1-a,2-4a),\\
\label{eqn:czxa}
c_Z(X_a) &= 1-a.
\end{align}
\end{proposition}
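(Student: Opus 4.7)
My plan attacks the cylindrical capacity and the Gromov width separately.

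\emph{The cylindrical capacity.} The upper bound $c_Z(X_a) \le 1-a$ is immediate from $X_a \subset Z^4(1-a)$, since $\max_{\mu \in \Omega_a}\mu_1 = 1-a$ (attained at the vertex $(1-a,a)$). For the matching lower bound I will invoke Theorem~\ref{thm:czwt} — the general cylindrical capacity formula for weakly convex toric domains in $\R^4$, proved elsewhere in this section — as $X_a$ is weakly convex in the sense of Definition~\ref{def:wctd}: $\Omega_a$ is a convex pentagon and $\overline{\partial_+\Omega_a}$ runs from $(1-2a,0)$ on the positive $\mu_1$-axis to $(0,1-2a)$ on the positive $\mu_2$-axis.

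\emph{The Gromov width, upper bound.} By Lemma~\ref{lem:restatement} we have $c_{\op{Gr}}(X_a) \le c_Z(X_a) = 1-a$, which already gives the claimed value when $a \le 1/3$. For $a > 1/3$ the sharper bound $c_{\op{Gr}}(X_a) \le 2-4a$ is needed; I will derive it from the ECH embedding criterion of Cristofaro-Gardiner (Theorem~\ref{thm:concaveconvex}), according to which any embedding $\op{int}(B^4(c)) \underset{s}{\hookrightarrow} X_a$ forces $d(k)c \le c_k^{\op{ECH}}(X_a)$ for every $k$, with $d(k)$ as in~\eqref{eqn:ckechball}. Bounding $c_k^{\op{ECH}}(X_a)$ from above for a suitable small $k$ — using the inclusion of $X_a$ in a convex toric container, for example the monotone pentagon $X_{\Omega^{\sharp}}$ with vertices $(0,0), (1-a,0), (1-a,a), (a,1-a), (0,1-a)$, whose ECH capacities are computable from the convex generator formulas of~\cite{qech} — then yields $c \le 2-4a$.

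\emph{The Gromov width, lower bound.} To exhibit $\op{int}(B^4(c)) \underset{s}{\hookrightarrow} X_a$ for any $c < \min(1-a,2-4a)$, I will again apply Theorem~\ref{thm:concaveconvex} in the reverse direction: it suffices to verify $d(k)c \le c_k^{\op{ECH}}(X_a)$ for every $k$. The necessary lower bounds on $c_k^{\op{ECH}}(X_a)$ follow from monotonicity applied to explicit ball packings $\coprod_i B^4(a_i) \underset{s}{\hookrightarrow} X_a$ realized by $GL(2,\Z)$-transformed copies of $\Delta^2(\cdot)$ inside $\Omega_a$, via the Traynor trick (Proposition~\ref{prop:traynor}) together with Theorem~\ref{thm:concave} and \eqref{eqn:ckechunion}. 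A key packing consists of two disjoint triangles affine equivalent to $\Delta^2(1-2a)$ — one at the origin and one based at $(1-2a,1-2a)$ — whose second triangle fits in $\Omega_a$ precisely when $a \ge 1/3$, contributing $c_2^{\op{ECH}}(X_a) \ge 2(1-2a) = 2-4a$ exactly in the regime where this constraint is the binding one; a separate single-triangle argument handles the regime $a \le 1/3$.

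\emph{Main obstacle.} The delicate part is the ECH bookkeeping: determining enough of the sequence $c_k^{\op{ECH}}(X_a)$ for the upper and lower bounds to meet precisely at $\min(1-a, 2-4a)$. Since $\Omega_a$ is non-monotone — the boundary segments from $(1-2a,0)$ to $(1-a,a)$ and from $(a,1-a)$ to $(0,1-2a)$ both have positive slope, so their outward normals have a negative component — the concave weight-expansion formula of~\cite{concave} does not apply on the nose, and the analysis must combine the reflection symmetry $(\mu_1,\mu_2)\leftrightarrow(\mu_2,\mu_1)$ of $\Omega_a$ with the weakly convex structure to identify the correct convex generators.
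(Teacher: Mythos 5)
Your treatment of $c_Z(X_a)$ (inclusion into $Z^4(1-a)$ for the upper bound, Theorem~\ref{thm:czwt} for the lower bound) and the bound $c_{\op{Gr}}(X_a)\le c_Z(X_a)=1-a$ coincide with the paper. But both steps you defer to ``ECH bookkeeping'' in the regime $a>1/3$ --- which is where the actual content of the proposition lies --- rest on methods that cannot succeed, not merely on computations left undone.

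For the upper bound $c_{\op{Gr}}(X_a)\le 2-4a$: passing to a convex toric container $X_{\Omega^{\sharp}}\supset X_a$ erases exactly the feature responsible for this bound. Your $\Omega^{\sharp}$ contains $\Delta^2(1-a)$, so $\op{int}(B^4(1-a))\underset{s}{\hookrightarrow}X_{\Omega^{\sharp}}$ and hence, by monotonicity and \eqref{eqn:ckechball}, $c_k^{\op{ECH}}(X_{\Omega^{\sharp}})\ge d(k)(1-a)$ for every $k$. The obstruction $d(k)c\le c_k^{\op{ECH}}(X_{\Omega^{\sharp}})$ is therefore satisfied by every $c\le 1-a$ and can never exclude $c\in(2-4a,1-a]$; the same holds for any convex or monotone container, since filling in the two non-monotone notches of $\Omega_a$ along the axes is precisely what restores Gromov width $1-a$. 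The paper instead applies the Choi--Cristofaro-Gardiner formula \eqref{eqn:weaklyconvex} to the weakly convex domain $X_a$ itself, with $r=1$ and four subtracted balls of capacity $a$; taking $k=1$ and $l=4$ gives $c_1^{\op{ECH}}(X_a)\le c_5^{\op{ECH}}(B^4(1))-4a=2-4a$.

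For the lower bound when $1/3\le a<1/2$: your packing by two disjoint triangles affine equivalent to $\Delta^2(1-2a)$ has $c_1^{\op{ECH}}$ of the corresponding disjoint union of balls equal to $1-2a<2-4a$, so the hypothesis of Theorem~\ref{thm:concaveconvex} for embedding $\op{int}(B^4(c))$ with $c$ near $2-4a$ already fails at $k=1$: two balls of capacity $1-2a$ do not dominate one ball of capacity $2-4a$ in ECH capacities. Nor can a single Traynor triangle supply the missing $k=1$ bound --- one can check that for $1/3\le a<2/5$ no lattice-affine copy of $\Delta^2(2-4a)$ fits in $\Omega_a$, as the strip $|\mu_1-\mu_2|\le 1-2a$ forces any candidate past the line $\mu_1+\mu_2=1$. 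The paper's resolution is Lemma~\ref{lem:diamond} (Latschev--McDuff--Schlenk): a single ball $\op{int}(B^4(2-4a))$ embeds into the diamond with vertices $(1-a,a)$, $(a,1-a)$, $(3a-1,a)$, $(a,3a-1)$ inscribed in $\Omega_a$. This is not a Traynor-type embedding, and it yields the Gromov width lower bound directly, with no further ECH argument. (Note also that for $a\le 1/3$ the paper's lower bound does not come from a triangle inside $\Omega_a$ --- the largest simplex there has size $1-2a<1-a$ --- but from packing $B^4(1-a)\sqcup\coprod_4 B^4(a)$ into $B^4(1)$ and feeding this into \eqref{eqn:weaklyconvex}.)
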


\begin{corollary}
\label{cor:ife}
Let $0<a<1/2$ and let $X_a$ be as above. Then:
\begin{description}
\item{(a)} The conclusion of Conjecture \ref{conj:V} holds for $X_a$, i.e.\  all normalized symplectic capacities defined for $X_a$ agree, if and only if $a\le 1/3$.
\item{(b)} The conclusion of Conjecture \ref{eq:viterbo-conj} holds for $X_a$, i.e.\ every normalized symplectic capacity $c$ defined for $X_a$ satisfies $c(X_a) \le \sqrt{2\op{Vol}(X_a)}$, if and only if $a\le 2/5$.
\end{description}
\end{corollary}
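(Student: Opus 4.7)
The plan is to reduce both parts to elementary comparisons of the capacity formulas of Proposition~\ref{prop:ife}.

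For part (a), I will apply Lemma~\ref{lem:restatement}: the conclusion of Conjecture~\ref{conj:V} holds for $X_a$ if and only if $c_{\op{Gr}}(X_a) = c_Z(X_a)$. By \eqref{eqn:cgrxa} and \eqref{eqn:czxa}, this equality says $\min(1-a,\,2-4a) = 1-a$, i.e.\ $1-a \le 2-4a$, which rearranges to $a \le 1/3$. Conversely, if $a > 1/3$ then $c_{\op{Gr}}(X_a) = 2-4a < 1-a = c_Z(X_a)$, so the conclusion of Conjecture~\ref{conj:V} fails by the same lemma.

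For part (b), since every normalized symplectic capacity $c$ defined on $X_a$ satisfies $c(X_a) \le c_Z(X_a)$ and since $c_Z$ is itself normalized, Conjecture~\ref{eq:viterbo-conj} holds for $X_a$ precisely when $c_Z(X_a) \le \sqrt{2\,\op{Vol}(X_a)}$. I will compute $\op{Vol}(X_a) = \op{Vol}(\Omega_a)$ using \eqref{eqn:voltoric}: viewing $\Omega_a$ as the triangle with vertices $(0,0),(1,0),(0,1)$ with two small right triangles removed near $(1,0)$ and $(0,1)$, each with base $2a$ on an axis and opposite vertex at height $a$ (so each of area $a^2$), one obtains $\op{Vol}(X_a) = (1-4a^2)/2$. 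Combined with \eqref{eqn:czxa}, the required inequality becomes $1-a \le \sqrt{1-4a^2}$. Both sides are positive on $(0,1/2)$, so squaring is an equivalence and yields $(1-a)^2 \le 1-4a^2$, i.e.\ $5a^2 - 2a \le 0$, equivalent to $a \le 2/5$.

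The main obstacle here is really the preceding Proposition~\ref{prop:ife}, and in particular the computation \eqref{eqn:czxa} of the cylindrical capacity (which requires the non-monotone-toric machinery of \S\ref{sec:ife}); once its two formulas are available, the corollary reduces to the arithmetic above.
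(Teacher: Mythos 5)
Your proposal is correct and follows essentially the same route as the paper: part (a) is reduced via Lemma~\ref{lem:restatement} to checking $\min(1-a,2-4a)=1-a$, and part (b) is reduced to $c_Z(X_a)\le\sqrt{2\op{Vol}(X_a)}$ using that $c_Z$ is the largest normalized capacity, with the same volume computation $\op{Vol}(X_a)=(1-4a^2)/2$. As you note, the real content lives in Proposition~\ref{prop:ife}; the corollary itself is just this arithmetic.
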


\begin{proof}[Proof of Corollary \ref{cor:ife}]
(a) By Lemma~\ref{lem:restatement}, we need to check that $c_{\op{Gr}}(X_a) = c_Z(X_a)$ if and only if $a\le 1/3$. This follows directly from \eqref{eqn:cgrxa} and \eqref{eqn:czxa}.

(b) Since $c_Z$ is the largest normalized symplectic capacity, the conclusion of Conjecture \ref{eq:viterbo-conj} holds for $X_a$ if and only if
\begin{equation}
\label{eqn:ve}
c_Z(X_a) \le \sqrt{2\op{Vol}(X_a)}.
\end{equation}
By equation \eqref{eqn:voltoric}, we have
\[
\op{Vol}(X_{\Omega_a}) = \frac{1-4a^2}{2}.
\]
It follows from this and \eqref{eqn:czxa} that \eqref{eqn:ve} holds if and only if $a\le 2/5$.
\end{proof}

\begin{remark}
To recap, the conclusion of Conjecture \ref{conj:V} holds if and only if the ratio $c_{Z}/c_{\op{Gr}}=1$, and the conclusion of Conjecture \ref{eq:viterbo-conj} holds if and only if the ratio $c_Z^n/(n!\op{Vol})\le 1$. The above calculations show that both of these ratios for $X_a$ go to infinity as $a\to 1/2$.
\end{remark}

\begin{figure}[th]
\centering
\subfloat[The domain $\Omega_a$]{
	\begin{minipage}[c][0.9\width]{0.4\textwidth}
	\centering
\begingroup%
  \makeatletter%
  \providecommand\color[2][]{%
    \errmessage{(Inkscape) Color is used for the text in Inkscape, but the package 'color.sty' is not loaded}%
    \renewcommand\color[2][]{}%
  }%
  \providecommand\transparent[1]{%
    \errmessage{(Inkscape) Transparency is used (non-zero) for the text in Inkscape, but the package 'transparent.sty' is not loaded}%
    \renewcommand\transparent[1]{}%
  }%
  \providecommand\rotatebox[2]{#2}%
  \newcommand*\fsize{\dimexpr\f@size pt\relax}%
  \newcommand*\lineheight[1]{\fontsize{\fsize}{#1\fsize}\selectfont}%
  \ifx\svgwidth\undefined%
    \setlength{\unitlength}{128.95602856bp}%
    \ifx\svgscale\undefined%
      \relax%
    \else%
      \setlength{\unitlength}{\unitlength * \real{\svgscale}}%
    \fi%
  \else%
    \setlength{\unitlength}{\svgwidth}%
  \fi%
  \global\let\svgwidth\undefined%
  \global\let\svgscale\undefined%
  \makeatother%
  \begin{picture}(1,1.09034539)%
    \lineheight{1}%
    \setlength\tabcolsep{0pt}%
    \put(0,0){\includegraphics[width=\unitlength,page=1]{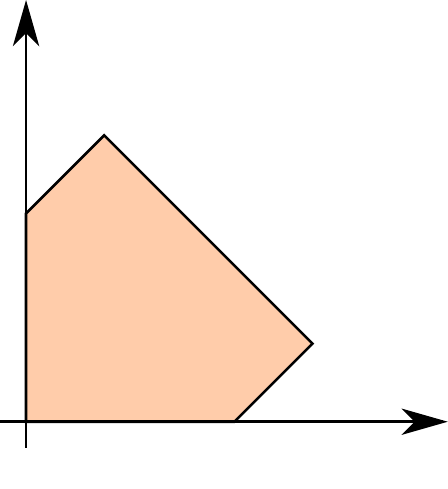}}%
    \put(0.23679166,0.34373631){\color[rgb]{0,0,0}\makebox(0,0)[lt]{\lineheight{1.25}\smash{\begin{tabular}[t]{l}$\Omega_a$\end{tabular}}}}%
    \put(0,0){\includegraphics[width=\unitlength,page=2]{example.pdf}}%
    \put(0.81423104,0.03216832){\color[rgb]{0,0,0}\makebox(0,0)[lt]{\lineheight{1.25}\smash{\begin{tabular}[t]{l}$1$\end{tabular}}}}%
    \put(0.4237325,0.01543726){\color[rgb]{0,0,0}\makebox(0,0)[lt]{\lineheight{1.25}\smash{\begin{tabular}[t]{l}$1-2a$\end{tabular}}}}%
    \put(0,0){\includegraphics[width=\unitlength,page=3]{example.pdf}}%
  \end{picture}%
\endgroup%
\label{fig:example}
\end{minipage}}
\hfill
\subfloat[A domain to which Theorem \ref{thm:czwt} applies]{
	\begin{minipage}[c][0.9\width]{0.4\textwidth}
	\centering
\begingroup%
  \makeatletter%
  \providecommand\color[2][]{%
    \errmessage{(Inkscape) Color is used for the text in Inkscape, but the package 'color.sty' is not loaded}%
    \renewcommand\color[2][]{}%
  }%
  \providecommand\transparent[1]{%
    \errmessage{(Inkscape) Transparency is used (non-zero) for the text in Inkscape, but the package 'transparent.sty' is not loaded}%
    \renewcommand\transparent[1]{}%
  }%
  \providecommand\rotatebox[2]{#2}%
  \newcommand*\fsize{\dimexpr\f@size pt\relax}%
  \newcommand*\lineheight[1]{\fontsize{\fsize}{#1\fsize}\selectfont}%
  \ifx\svgwidth\undefined%
    \setlength{\unitlength}{142.13589272bp}%
    \ifx\svgscale\undefined%
      \relax%
    \else%
      \setlength{\unitlength}{\unitlength * \real{\svgscale}}%
    \fi%
  \else%
    \setlength{\unitlength}{\svgwidth}%
  \fi%
  \global\let\svgwidth\undefined%
  \global\let\svgscale\undefined%
  \makeatother%
  \begin{picture}(1,0.97664783)%
    \lineheight{1}%
    \setlength\tabcolsep{0pt}%
    \put(0,0){\includegraphics[width=\unitlength,page=1]{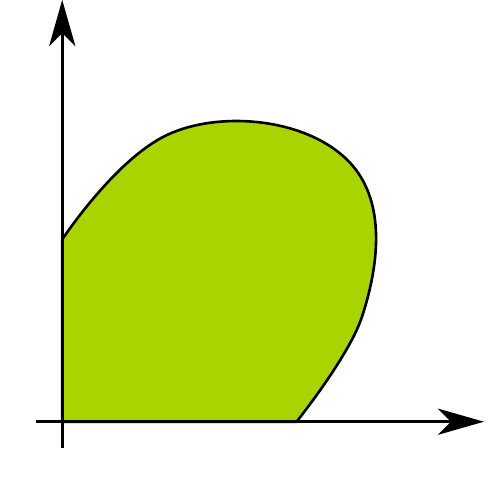}}%
    \put(0.39933054,0.23331176){\color[rgb]{0,0,0}\makebox(0,0)[lt]{\lineheight{1.25}\smash{\begin{tabular}[t]{l}$\Omega$\end{tabular}}}}%
    \put(0,0){\includegraphics[width=\unitlength,page=2]{czwt.pdf}}%
    \put(0.8350526,0.7685138){\color[rgb]{0,0,0}\makebox(0,0)[lt]{\lineheight{1.25}\smash{\begin{tabular}[t]{l}$\mu_1=\mu_2$\end{tabular}}}}%
    \put(0,0){\includegraphics[width=\unitlength,page=3]{czwt.pdf}}%
    \put(0.73288964,0.01659256){\color[rgb]{0,0,0}\makebox(0,0)[lt]{\lineheight{1.25}\smash{\begin{tabular}[t]{l}$M_1$\end{tabular}}}}%
    \put(-0.00584003,0.70255575){\color[rgb]{0,0,0}\makebox(0,0)[lt]{\lineheight{1.25}\smash{\begin{tabular}[t]{l}$M_2$\end{tabular}}}}%
  \end{picture}%
\endgroup%
\label{fig:czwt}
\end{minipage}}
\caption{Some domains}
\end{figure}

To prove Proposition \ref{prop:ife}, we will use the following formula for the ECH capacities of a weakly convex toric domain $X_\Omega$. Let $r$ be the smallest positive real number such that $\Omega\subset\Delta^2(r)$. Then $\Delta^2(r)\setminus\Omega = \widetilde{\Omega}_1\sqcup \widetilde{\Omega}_2$ where $\widetilde{\Omega}_1$ does not intersect the $\mu_2$-axis, and $\widetilde{\Omega}_2$ does not intersect the $\mu_1$-axis. It is possible that $\widetilde{\Omega}_1$ and/or $\widetilde{\Omega}_2$ is empty. As in the discussion preceding \eqref{eqn:weights}, the closures of $\widetilde{\Omega}_1$ and $\widetilde{\Omega}_2$ are affine equivalent to domains $\Omega_1$ and $\Omega_2$ such that $X_{\Omega_1}$ and $X_{\Omega_2}$ are concave toric domains. Denote the union (as multisets) of their weight sequences by
\[
W(X_{\Omega_1}) \cup W(X_{\Omega_2}) = (a_1,\ldots).
\]
We then have:

\begin{theorem}[Choi--Cristofaro-Gardiner \cite{concaveconvex}]
\label{thm:weaklyconvex}
If $X_\Omega$ is a four-dimensional weakly convex toric domain as above, then
\begin{equation}
\label{eqn:weaklyconvex}
c_k^{\op{ECH}}(X_\Omega) = \inf_{l\ge 0}\left\{c_{k+l}^{\op{ECH}}\left(B^4(r)\right) - c_l^{\op{ECH}}\left(\coprod_iB^4(a_i)\right)\right\}.
\end{equation}
\end{theorem}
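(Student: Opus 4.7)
The statement asserts an exact combinatorial formula for $c_k^{\op{ECH}}$ of a weakly convex toric domain. The natural strategy is to prove the two inequalities separately, with the upper bound coming from a packing construction and the lower bound requiring Theorem \ref{thm:concaveconvex} applied to carefully chosen concave subdomains.

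For the $\le$ direction, the key geometric observation is that $X_\Omega$ together with a ball packing of the complementary region embeds into $B^4(r)$. Since $\Omega \subset \Delta^2(r)$ we have $X_\Omega \subset B^4(r)$, and the closures of the two complementary pieces $\widetilde\Omega_1, \widetilde\Omega_2 \subset \Delta^2(r)\setminus \Omega$ are affine equivalent to the concave toric domains $X_{\Omega_1}, X_{\Omega_2}$ whose joint weight expansion is $(a_1, a_2, \ldots)$. Applying the Traynor trick (Proposition \ref{prop:traynor}) triangle by triangle in the weight decomposition of these concave pieces gives a symplectic embedding $\coprod_i \op{int}(B^4(a_i)) \hookrightarrow X_{\widetilde\Omega_1} \sqcup X_{\widetilde\Omega_2}$, which sits disjointly from $X_\Omega$ inside $B^4(r)$. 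ECH monotonicity together with the disjoint union formula \eqref{eqn:ckechunion} (specialized to the partition $(k,l)$) then yield
\[
c_k^{\op{ECH}}(X_\Omega) + c_l^{\op{ECH}}\left(\coprod_i B^4(a_i)\right) \;\le\; c_{k+l}^{\op{ECH}}(B^4(r))
\]
for every $l \ge 0$; rearranging and taking the infimum over $l$ gives the upper bound.

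For the $\ge$ direction, the plan is to exhibit, for each $k$, a concave toric domain $X_{\Omega^c}$ symplectically embedded in $X_\Omega$ whose $k$-th ECH capacity realizes (or approximates) the infimum on the right-hand side of \eqref{eqn:weaklyconvex}. Granted such a configuration, monotonicity gives $c_k^{\op{ECH}}(X_{\Omega^c}) \le c_k^{\op{ECH}}(X_\Omega)$, completing the argument. The tool that produces the embedding is Theorem \ref{thm:concaveconvex}: a concave toric domain embeds into a weakly convex one if and only if every $c_m^{\op{ECH}}$ inequality holds. So the task becomes purely combinatorial: find $\Omega^c$ whose entire sequence $\{c_m^{\op{ECH}}(X_{\Omega^c})\}_{m\ge 0}$ is dominated term-by-term by the right-hand side of \eqref{eqn:weaklyconvex} (which we have just shown bounds $c_m^{\op{ECH}}(X_\Omega)$ from above), while $c_k^{\op{ECH}}(X_{\Omega^c})$ saturates the bound at the chosen index $k$.

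The main obstacle is this simultaneous combinatorial matching across all indices. The candidate $\Omega^c$ is constructed from the optimal ball packing $\coprod B^4(b_j)$ that attains (or nearly attains) the infimum in \eqref{eqn:weaklyconvex} at index $k$, assembled via an inverse Traynor construction into a concave toric shape inside $\Omega$. Verifying that $c_m^{\op{ECH}}(X_{\Omega^c})$ does not overshoot for any $m\ne k$ is where all the delicate ECH combinatorics enter: one uses \eqref{eqn:ckechunion}, \eqref{eqn:ckechball}, and Theorem~\ref{thm:concave} to reduce every $c_m^{\op{ECH}}$ comparison to a statement about ball packings inside $B^4(r)$, and then proceeds by induction on the combinatorial complexity of $\partial_+\Omega$ (equivalently, the number of edges of the piecewise-smooth arc). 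The base case, where $\Omega$ is a single triangle so $X_\Omega$ is a ball, reduces directly to \eqref{eqn:ckechball}; the inductive step peels off a vertex of $\Omega$ and reroutes one triangle of the packing across the affine boundary, using the upper bound already established for the smaller complexity to control the error terms in the $c_m^{\op{ECH}}$ inequalities for $m \ne k$.
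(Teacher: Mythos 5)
First, a remark on scope: the paper does not prove this theorem at all --- it is imported verbatim from \cite{concaveconvex} --- so there is no in-paper argument to compare against, and I am judging your proposal on its own terms.

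Your upper-bound half is correct and is the standard argument: $X_\Omega$ together with the Traynor packing of $\Delta^2(r)\setminus\Omega$ by the balls $\op{int}(B^4(a_i))$ sits disjointly inside $B^4(r)$, so monotonicity and the disjoint-union formula \eqref{eqn:ckechunion} give $c_k^{\op{ECH}}(X_\Omega)+c_l^{\op{ECH}}\bigl(\coprod_i B^4(a_i)\bigr)\le c_{k+l}^{\op{ECH}}(B^4(r))$ for every $l$, which is the $\le$ direction of \eqref{eqn:weaklyconvex}.

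The lower bound is where the proposal fails, in two distinct ways. First, your use of Theorem~\ref{thm:concaveconvex} is logically backwards. To embed $X_{\Omega^c}$ into $X_\Omega$ via that theorem you must verify $c_m^{\op{ECH}}(X_{\Omega^c})\le c_m^{\op{ECH}}(X_\Omega)$ for \emph{all} $m$; but at this stage the only thing known about $c_m^{\op{ECH}}(X_\Omega)$ is that it is bounded \emph{above} by the right-hand side of \eqref{eqn:weaklyconvex}. Showing that $c_m^{\op{ECH}}(X_{\Omega^c})$ is also bounded above by that same quantity does not compare the two capacities --- the inequality points the wrong way, and the number you would need to beat is precisely the one you are trying to compute. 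If you instead arrange $\Omega^c\subset\Omega$ set-theoretically so that monotonicity applies directly, you no longer need Theorem~\ref{thm:concaveconvex}, but then nothing guarantees that a concave toric subdomain exists whose $k$-th capacity saturates the infimum; the ``inverse Traynor construction'' is an aspiration, not a construction, and the proposed induction on the number of edges of $\partial_+\Omega$ is not set up to produce one. Second, and more fundamentally, the $\ge$ direction cannot be a formal consequence of monotonicity, \eqref{eqn:ckechunion}, \eqref{eqn:ckechball} and Theorem~\ref{thm:concave}: all of those tools, applied to subdomains of $X_\Omega$, only ever yield \emph{lower} bounds of the form ``$c_k^{\op{ECH}}(X_\Omega)\ge c_k^{\op{ECH}}(\text{concave piece})$'', and one still has to show some such piece reaches the infimum. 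The actual content of the Choi--Cristofaro-Gardiner proof is a direct computation of $c_k^{\op{ECH}}(X_\Omega)$ from the combinatorial model for the ECH chain complex of the boundary (a minimization of the $\Omega$-length over convex lattice paths enclosing a prescribed number of lattice points), followed by a combinatorial identification of that minimum with the right-hand side of \eqref{eqn:weaklyconvex}. Your sketch never touches this input, so the lower bound remains unproven.
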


We need one more lemma, which follows from \cite[Cor.\ 4.2]{LMS}:

\begin{lemma}
\label{lem:diamond}
Let $\mu_1,\mu_2\ge a > 0$. Let $\Omega$ be the ``diamond'' in $\R^2_{\ge 0}$ given by the convex hull of the points $(\mu_1\pm a,\mu_2)$ and $(\mu_1,\mu_2\pm a)$. Then there is a symplectic embedding
\[
\op{int}(B^4(2a)) \underset{s}{\hookrightarrow} X_\Omega.
\]
\end{lemma}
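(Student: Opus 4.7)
The plan is to apply \cite[Cor.~4.2]{LMS}. First, write $(\mu_1^0,\mu_2^0)$ for the center $(\mu_1,\mu_2)$ of the diamond, to avoid a collision of notation. One verifies immediately that the convex hull of the four points $(\mu_1^0\pm a,\mu_2^0)$ and $(\mu_1^0,\mu_2^0\pm a)$ is precisely the $L^1$-ball of radius $a$ centered at $(\mu_1^0,\mu_2^0)$:
\[
\Omega = \left\{\mu\in\R^2 \,:\, |\mu_1-\mu_1^0|+|\mu_2-\mu_2^0|\le a\right\}.
\]
The hypothesis $\mu_j^0\ge a$ ensures $\Omega\subset\R^2_{\ge 0}$, so $X_\Omega$ is a bona fide toric domain, and moreover the interior of $\Omega$ stays away from the coordinate axes, so that $X_\Omega$ deformation retracts onto the product-of-circles Lagrangian torus $L=\{z\in\C^2 \,:\, \pi|z_1|^2=\mu_1^0,\ \pi|z_2|^2=\mu_2^0\}$.

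Via the standard action--angle identification for toric domains, $X_\Omega$ is symplectomorphic to a neighborhood of $L$ whose moment-map image in the directions normal to $L$ is exactly this $L^1$-ball of radius $a$. The content of \cite[Cor.~4.2]{LMS} is that such an $L^1$-neighborhood of a product Lagrangian in $\C^2$ contains a symplectic embedding of $\op{int}(B^4(2a))$; note that this is volume-filling, since both sides have volume $2a^2$. Transporting the embedding through the identification then yields the desired $\op{int}(B^4(2a))\underset{s}{\hookrightarrow} X_\Omega$.

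The main point requiring care is bookkeeping: one must reconcile the normalization conventions of \cite{LMS} with those used in the present paper, in particular the factors of $\pi$ in the moment map, the choice of $L^1$ (rather than $L^\infty$) norm in the normal directions, and the identification of the embedded ball's capacity with twice the $L^1$-radius of the neighborhood. Once these conventions are matched, the lemma follows from \cite[Cor.~4.2]{LMS} with no further work.
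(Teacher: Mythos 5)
Your proposal is correct and takes the same route as the paper, which simply asserts that the lemma ``follows from \cite[Cor.\ 4.2]{LMS}'' without further argument. Your identification of $X_{\op{int}(\Omega)}$ with the product of the open $L^1$-ball of radius $a$ and the Lagrangian torus via action--angle coordinates, together with the volume count $\op{Vol}(B^4(2a))=2a^2=\op{Vol}(X_\Omega)$, supplies exactly the bookkeeping the paper leaves implicit.
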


\begin{proof}[Proof of Proposition \ref{prop:ife}]
To prove \eqref{eqn:cgrxa}, we first describe the ECH capacities of $X_a$. In the formula \eqref{eqn:weaklyconvex} for $X_a$, we have $r=1$, while the weight expansions of $\Omega_1$ and $\Omega_2$ are both $(a,a)$; the corresponding triangles are shown in Figure \ref{fig:balls}(b). Thus by Theorem \ref{thm:weaklyconvex} and equation \eqref{eqn:ckechunion}, we have
\begin{equation}
\label{eqn:ckecha}
c_k^{\op{ECH}}(X_a) = \inf_{l_1,\ldots,l_4\ge 0}\left\{c_{k+l_1+l_2+l_3+l_4}^{\op{ECH}}\left(B^4(1)\right) - \sum_{i=1}^4 c_{l_i}^{\op{ECH}}\left(B^4(a)\right)\right\}.
\end{equation}
We also note from \eqref{eqn:ckechball} that
\[
c_1^{\op{ECH}}(B^4(r)) = c_2^{\op{ECH}}(B^4(r))=r, \quad\quad\quad c_5^{\op{ECH}}(B^4(r))=2r.
\]

Taking $k=1$ and $(l_1,\ldots,l_4) = (1,0,0,0)$ in equation \eqref{eqn:ckecha}, we get
\begin{equation}
\label{eqn:c1echa}
c_1^{\op{ECH}}(X_{\Omega_a}) \le 1-a.
\end{equation}
Taking $k=1$ and $(l_1,\ldots,l_4)=(1,1,1,1)$ in equation \eqref{eqn:ckecha}, we get
\begin{equation}
\label{eqn:c5echa}
c_1^{\op{ECH}}(X_{\Omega_a}) \le 2 - 4a.
\end{equation}
By \eqref{eqn:c1echa} and \eqref{eqn:c5echa} and the fact that $c_1^{\op{ECH}}$ is a normalized symplectic capacity, we conclude that
\begin{equation}
\label{eqn:mina}
c_{\op{Gr}}(X_{\Omega_a}) \le \min(1-a,2-4a).
\end{equation}

To prove the reverse inequality to \eqref{eqn:mina}, suppose first that $0<a\le 1/3$. It is enough to prove that there exists a symplectic embedding $\op{int}(B^4(1-a)) \underset{s}{\hookrightarrow} X_{\Omega_a}$. By Theorem~\ref{thm:concaveconvex}, it is enough to show that
\[
c_k^{\op{ECH}}(B^4(1-a)) \le c_k^{\op{ECH}}(X_{\Omega_a})
\]
for all nonnegative integers $k$. By equation \eqref{eqn:ckecha}, the above inequality is equivalent to
\begin{equation}
\label{eqn:aiet}
c_k^{\op{ECH}}(B^4(1-a)) + \sum_{i=1}^4c_{l_i}^{\op{ECH}}(B^4(a)) \le c_{k+l_1+l_2+l_3+l_4}^{\op{ECH}}(B^4(1))
\end{equation}
for all nonnegative integers $k,l_1,\ldots,l_4\ge 0$. To prove \eqref{eqn:aiet}, by the monotonicity of ECH capacities and the disjoint union formula \eqref{eqn:ckechunion}, it suffices to find a symplectic embedding
\[
\op{int}\left(B^4(1-a) \sqcup \coprod_4 B^4(a)  \right) \underset{s}{\hookrightarrow} B^4(1).
\]
This embedding exists by the Traynor trick (Proposition \ref{prop:traynor}) using the triangles shown in Figure \ref{fig:balls}(a).

Finally, when $1/3\le a <1/2$, it is enough to show that there exists a symplectic embedding $\op{int}(B^4(2-4a)) \underset{s}{\hookrightarrow} X_{\Omega_a}$.  This exists by Lemma~\ref{lem:diamond} using the diamond shown in Figure \ref{fig:balls}(b).

This completes the proof of \eqref{eqn:cgrxa}. Equation \eqref{eqn:czxa} follows from Theorem~\ref{thm:czwt} below.
\end{proof}

\begin{figure}
\centering
\subfloat[$0<a\le 1/3$]{
	\begin{minipage}[c][0.9\width]{0.4\textwidth}
\centering
\begingroup%
  \makeatletter%
  \providecommand\color[2][]{%
    \errmessage{(Inkscape) Color is used for the text in Inkscape, but the package 'color.sty' is not loaded}%
    \renewcommand\color[2][]{}%
  }%
  \providecommand\transparent[1]{%
    \errmessage{(Inkscape) Transparency is used (non-zero) for the text in Inkscape, but the package 'transparent.sty' is not loaded}%
    \renewcommand\transparent[1]{}%
  }%
  \providecommand\rotatebox[2]{#2}%
  \newcommand*\fsize{\dimexpr\f@size pt\relax}%
  \newcommand*\lineheight[1]{\fontsize{\fsize}{#1\fsize}\selectfont}%
  \ifx\svgwidth\undefined%
    \setlength{\unitlength}{146.68459832bp}%
    \ifx\svgscale\undefined%
      \relax%
    \else%
      \setlength{\unitlength}{\unitlength * \real{\svgscale}}%
    \fi%
  \else%
    \setlength{\unitlength}{\svgwidth}%
  \fi%
  \global\let\svgwidth\undefined%
  \global\let\svgscale\undefined%
  \makeatother%
  \begin{picture}(1,0.94479555)%
    \lineheight{1}%
    \setlength\tabcolsep{0pt}%
    \put(0,0){\includegraphics[width=\unitlength,page=1]{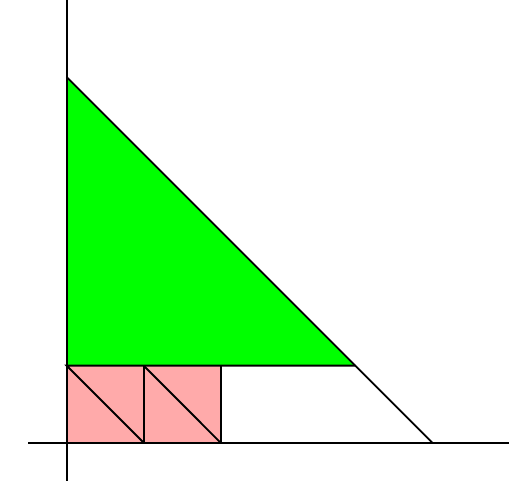}}%
    \put(-0.00418269,0.21055449){\color[rgb]{0,0,0}\makebox(0,0)[lt]{\lineheight{1.25}\smash{\begin{tabular}[t]{l}$a$\end{tabular}}}}%
    \put(0.05520446,0.79362827){\color[rgb]{0,0,0}\makebox(0,0)[lt]{\lineheight{1.25}\smash{\begin{tabular}[t]{l}1\end{tabular}}}}%
  \end{picture}%
\endgroup%

\end{minipage}}
\hfill
\subfloat[$1/3\le a < 1/2$]{
	\begin{minipage}[c][0.9\width]{0.4\textwidth}
\centering
\begingroup%
  \makeatletter%
  \providecommand\color[2][]{%
    \errmessage{(Inkscape) Color is used for the text in Inkscape, but the package 'color.sty' is not loaded}%
    \renewcommand\color[2][]{}%
  }%
  \providecommand\transparent[1]{%
    \errmessage{(Inkscape) Transparency is used (non-zero) for the text in Inkscape, but the package 'transparent.sty' is not loaded}%
    \renewcommand\transparent[1]{}%
  }%
  \providecommand\rotatebox[2]{#2}%
  \newcommand*\fsize{\dimexpr\f@size pt\relax}%
  \newcommand*\lineheight[1]{\fontsize{\fsize}{#1\fsize}\selectfont}%
  \ifx\svgwidth\undefined%
    \setlength{\unitlength}{152.62403559bp}%
    \ifx\svgscale\undefined%
      \relax%
    \else%
      \setlength{\unitlength}{\unitlength * \real{\svgscale}}%
    \fi%
  \else%
    \setlength{\unitlength}{\svgwidth}%
  \fi%
  \global\let\svgwidth\undefined%
  \global\let\svgscale\undefined%
  \makeatother%
  \begin{picture}(1,0.90995726)%
    \lineheight{1}%
    \setlength\tabcolsep{0pt}%
    \put(0,0){\includegraphics[width=\unitlength,page=1]{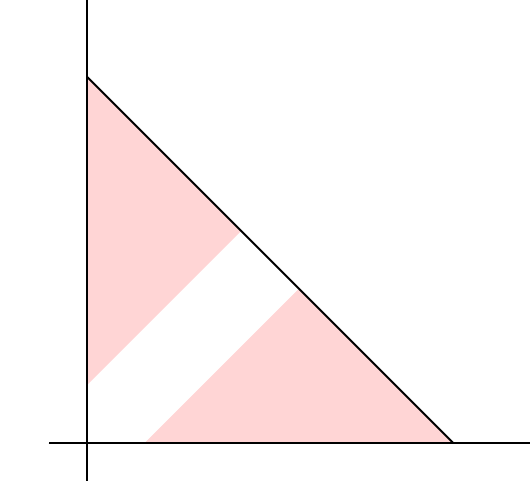}}%
    \put(0.09197164,0.76467272){\color[rgb]{0,0,0}\makebox(0,0)[lt]{\lineheight{1.25}\smash{\begin{tabular}[t]{l}1\end{tabular}}}}%
    \put(0,0){\includegraphics[width=\unitlength,page=2]{balls2.pdf}}%
    \put(0.49669285,0.00711762){\color[rgb]{0,0,0}\makebox(0,0)[lt]{\lineheight{1.25}\smash{\begin{tabular}[t]{l}$1-a$\end{tabular}}}}%
    \put(-0.00401992,0.4533487){\color[rgb]{0,0,0}\makebox(0,0)[lt]{\lineheight{1.25}\smash{\begin{tabular}[t]{l}$1-a$\end{tabular}}}}%
    \put(0.84174358,0.00711762){\color[rgb]{0,0,0}\makebox(0,0)[lt]{\lineheight{1.25}\smash{\begin{tabular}[t]{l}1\end{tabular}}}}%
  \end{picture}%
\endgroup%

\end{minipage}}
\caption{Ball packings}
\label{fig:balls}
\end{figure}

\begin{theorem}
\label{thm:czwt}
Let $X_\Omega\subset\R^4$ be a weakly convex toric domain, see Definition \ref{def:wctd}.  For $j=1,2$, let
\[
M_j=\max\{\mu_j\mid \mu\in\Omega\}.
\]
Assume that there exists $(M_1,\mu_2)\in\overline{\partial_+\Omega}$ with $\mu_2\le M_1$, and that there exists $(\mu_1,M_2)\in\overline{\partial_+\Omega}$ with $\mu_1\le M_2$. Then
\[
c_Z(X_\Omega) = \min(M_1,M_2).
\]
\end{theorem}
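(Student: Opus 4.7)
The upper bound $c_Z(X_\Omega) \le \min(M_1,M_2)$ is immediate. The definition of $M_j$ gives $\Omega \subset [0,M_1]\times[0,M_2]$, so $X_\Omega \subset P(M_1,M_2) \subset Z^4(M_j)$ as a subset of $\R^4$ for each $j=1,2$, and the conclusion follows from the definition of cylindrical capacity.

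For the reverse inequality, after relabeling assume $M_1\le M_2$; the goal is to prove $c_Z(X_\Omega)\ge M_1$. The plan is to exhibit a normalized symplectic capacity $c$ with $c(X_\Omega)\ge M_1$, since $c_Z$ is the largest normalized capacity and so $c_Z(X_\Omega)\ge c(X_\Omega)\ge M_1$. I propose to use the first equivariant capacity $c_1^{\mathrm{CH}}$ from \cite{GuH}. Monotonicity applied to the subset inclusion $X_\Omega\subset P(M_1,M_2)$, together with $c_1^{\mathrm{CH}}(P(M_1,M_2))=\min(M_1,M_2)=M_1$ (this polydisk is a convex toric domain, so Theorem~\ref{thm:equivalence} applies and gives $c_1^{\mathrm{CH}}=A_{\min}=M_1$), yields $c_1^{\mathrm{CH}}(X_\Omega)\le M_1$, so the content is to prove the matching lower bound $c_1^{\mathrm{CH}}(X_\Omega)\ge M_1$.

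By a standard $C^0$-approximation I may assume that $\partial X_\Omega$ is smooth and $\partial_+\Omega$ is strictly convex; the hypothesis then ensures that the rightmost point $P=(M_1,\mu_2)\in\partial_+\Omega$ (with $\mu_2\le M_1$) has outward normal exactly $(1,0)$, giving a simple closed Reeb orbit on $\partial X_\Omega$ of symplectic action $M_1$ and rotation number $\rho=1$. For the lower bound on $c_1^{\mathrm{CH}}$, I use the classification of Reeb orbits on $\partial X_\Omega$ as in the proof of Proposition~\ref{prop:dyncon}: at a smooth point of $\partial_+\Omega$ with coprime outward normal $(\nu_1,\nu_2)$, the corresponding Reeb orbit has action $\nu_1\mu_1+\nu_2\mu_2=\max_{\Omega}\nu\cdot x$ and rotation $\rho=\nu_1+\nu_2$. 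A short case analysis then shows that every Reeb orbit with $\rho\ge 1$ has action at least $M_1$: if $\nu_1\ge 1,\nu_2\ge 0$, evaluate the supporting functional at $(M_1,\mu_2)\in\Omega$ to get action $\ge \nu_1 M_1\ge M_1$; if $\nu_1=0,\nu_2\ge 1$, evaluate at $(\mu_1,M_2)\in\Omega$ to get action $\ge \nu_2 M_2\ge M_1$; if $\nu_1\ge 1,\nu_2<0,\nu_1+\nu_2\ge 1$, using $\mu_2\le M_1$ (the hypothesis) gives action $\ge\nu_1 M_1+\nu_2\mu_2\ge(\nu_1+\nu_2)M_1\ge M_1$; and the symmetric case $\nu_1<0,\nu_2\ge 1$ uses $\mu_1\le M_2$ similarly. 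At corners or endpoints of $\overline{\partial_+\Omega}$, the normal-cone description and the same estimates cover the boundary cases, while endpoints on the axes contribute no orbit with $\rho\ge 1$ because their normal cones lie in the half-plane $\{\nu_1+\nu_2\le 0\}$.

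The main obstacle is to rigorously identify $c_1^{\mathrm{CH}}(X_\Omega)$ with the minimum action among Reeb orbits of rotation number $\ge 1$ for weakly convex toric domains: the explicit Gutt--Hutchings formulas cover convex toric and concave toric domains separately, but a non-monotone weakly convex toric domain like $X_\Omega$ falls between these cases. Overcoming this requires a careful smoothing of $\partial X_\Omega$ (so that all boundary corners become non-degenerate Reeb orbits with well-defined Conley--Zehnder indices), continuity of $c_1^{\mathrm{CH}}$ under $C^0$-approximation of toric domains, and a direct calculation in positive equivariant symplectic homology verifying that the relevant Reeb orbit of action $M_1$ and index $3$ represents a non-vanishing class.
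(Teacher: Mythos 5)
Your upper bound and your Reeb-orbit action estimate are both correct, and the latter is essentially identical to the key claim in the paper's proof (the paper's claim~(*): every Reeb orbit on $\partial X_\Omega$ with positive rotation number has action at least $\min(M_1,M_2)$, proved by exactly the kind of evaluation of $\mu\cdot\nu$ at the points $(M_1,\mu_2')$ and $(\mu_1',M_2)$ that you carry out). The genuine gap is the step you yourself flag as ``the main obstacle'': you never establish the lower bound $c_1^{\op{CH}}(X_\Omega)\ge \min(M_1,M_2)$, and nothing in the paper or in \cite{GuH} hands you this for free, since the explicit Gutt--Hutchings formulas cover only convex and concave toric domains and $X_\Omega$ is neither. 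Note also that the crude bound $c_1^{\op{CH}}\ge A_{\min}$ is useless here: a non-monotone weakly convex toric domain has Reeb orbits of negative or zero rotation number (outward normals with a negative component) whose action can be strictly smaller than $\min(M_1,M_2)$ --- for the examples $X_a$ of \S\ref{sec:ife} one has $A_{\min}(X_a)\le 1-2a<1-a$. So the entire content of the theorem is precisely the bridge from ``all orbits with $\rho\ge 1$ have action $\ge\min(M_1,M_2)$'' to an embedding obstruction, and your proposal stops short of building that bridge.

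The paper builds it by a different mechanism, bypassing $c_1^{\op{CH}}$ entirely: following \cite{beyond}, it replaces $Z^4(R)$ by a slightly larger ellipsoid $Z'$ and $X_\Omega$ by a nondegenerate star-shaped subdomain $X'$, and produces in the resulting cobordism an embedded $J$-holomorphic curve of Fredholm index $0$ with one positive end on the short orbit $\gamma_+$ of $\partial Z'$ (action $R+\epsilon$, $\op{CZ}=3$). If every negative end had action below $\min(M_1,M_2)-\epsilon$, each would have rotation number less than $1$ and hence $\op{CZ}\le 1$, forcing $\op{ind}(u)\ge 2$ by the index formula \eqref{eqn:fredholmindex}, a contradiction; action monotonicity of holomorphic curves then gives $R\ge\min(M_1,M_2)-2\epsilon$. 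Your route could plausibly be completed by invoking the Reeb-orbit property of $c_k^{\op{CH}}$ from \cite[Thm.\ 1.6]{GuH} (for nondegenerate boundary, $c_1^{\op{CH}}$ equals the action of an orbit of Conley--Zehnder index $3$, and every such orbit arises from a perturbation of an orbit with $\rho\ge 1$), together with a careful Morse--Bott perturbation and continuity argument; but as written, the proposal asserts the conclusion of that analysis rather than proving it, so the proof is incomplete at its central step.
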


That is, under the hypotheses of the theorem, see Figure \ref{fig:czwt}, an optimal symplectic embedding of $X_\Omega$ into a cylinder is given by the inclusion of $X_\Omega$ into $(\pi|z_1|^2\le M_1)$ or $(\pi|z_2|^2\le M_2)$.

\begin{proof}
From the above inclusions we have $c_Z(X_\Omega) \le \min(M_1,M_2)$. To prove the reverse inequality, suppose that there exists a symplectic embedding
\begin{equation}
\label{eqn:beyond}
X_\Omega\underset{s}{\hookrightarrow} Z^4(R).
\end{equation}
We need to show that $R\ge \min(M_1,M_2)$. To do so, we will use ideas\footnote{The main theorem in \cite{beyond} gives a general obstruction to a symplectic embedding of one four-dimensional convex toric domain into another, which sometimes goes beyond the obstruction coming from ECH capacities. This theorem can be generalized to weakly convex toric domains; but rather than carry out the full generalization, we will just explain the simple case of this that we need.} from \cite{beyond}.

Let $\epsilon>0$ be small. Let $(A,0)$ and $(0,B)$ denote the endpoints of $\overline{\partial_+\Omega}$. By an approximation argument, we can assume that $\overline{\partial_+\Omega}$ is smooth, and that $\partial_+\Omega$ has positive slope less than $\epsilon$ near $(A,0)$ and slope greater than $\epsilon^{-1}$ near $(0,B)$. As in the proof of Proposition~\ref{prop:dyncon}, there are then three types of Reeb orbits on $\partial X_\Omega$:
\begin{description}
\item{(i)}
There is a simple Reeb orbit whose image is the circle with $\pi|z_1|^2=A$ and $z_2=0$. This Reeb orbit has symplectic action (period) equal to $A$, and rotation number $1-\epsilon^{-1}$.
\item{(ii)}
There is a simple Reeb orbit whose image is the circle with $z_1=0$ and $\pi|z_2|^2=B$. This Reeb orbit has symplectic action $B$ and rotation number $1-\epsilon^{-1}$.
\item{(iii)}
For each point $\mu\in\partial_+\Omega$ where $\partial_+\Omega$ has rational slope, there is an $S^1$ family of simple Reeb orbits in the torus where $\pi(|z_1|^2,|z_2|^2)=\mu$. If $\nu=(\nu_1,\nu_2)$ is the outward normal vector to $\partial_+\Omega$ at $\mu$, scaled so that $\nu_1,\nu_2$ are relatively prime integers, then these Reeb orbits have rotation number $\nu_1+\nu_2$ and symplectic action $\mu\cdot\nu$. See \cite[\S2.2]{GuH}.
\end{description}

We claim now that:
\begin{description}
\item{(*)}
Every Reeb orbit on $\partial X_\Omega$ with positive rotation number has symplectic action at least $\min(M_1,M_2)$.
\end{description}
To prove this claim, we only need to check the type (iii) simple Reeb orbits where $\nu_1+\nu_2\ge 1$. For such an orbit we must have $\nu_1\ge 1$ or $\nu_2\ge 1$. Suppose first that $\nu_1 \ge 1$. By the hypotheses of the theorem there exists $\mu_2'$ such that $(M_1,\mu_2')\in\overline{\partial_+\Omega}$ and $M_1\ge \mu_2'$. Since $\Omega$ is convex and $\nu$ is an outward normal at $\mu$, the symplectic action
\[
\mu\cdot \nu \ge (M_1,\mu_2')\cdot\nu  = M_1 + (\nu_1-1)(M_1-\mu_2') + (\nu_1+\nu_2-1)\mu_2'   \ge M_1.
\]
Likewise, if $\nu_2 \ge 1$, then the symplectic action $\mu\cdot\nu\ge M_2$.

As in \cite[\S5.3]{beyond}, starting from the symplectic embedding \eqref{eqn:beyond}, by replacing $X_\Omega$ with an appropriate subset and replacing $Z^4(R)$ with an appropriate superset, we obtain a symplectic embedding $X'\underset{s}{\hookrightarrow} \op{int}(Z')$, where:
\begin{itemize}
\item $Z'$ is an ellipsoid whose boundary has one simple Reeb orbit $\gamma_+$ with symplectic action $\mc{A}(\gamma_+)=R+\epsilon$ and Conley-Zehnder index $\op{CZ}(\gamma_+)=3$, another simple Reeb orbit with very large symplectic action, and no other simple Reeb orbits.
\item $X'$ is a (non-toric) star-shaped domain with smooth boundary, all of whose Reeb orbits are nondegenerate. Every Reeb orbit on $\partial X'$ with rotation number greater than or equal to $1$ has action at least $\min(M_1,M_2)-\epsilon$.
\end{itemize}

The symplectic embedding gives rise to a strong symplectic cobordism $W$ whose positive boundary is $\partial Z'$ and whose negative boundary is $\partial X'$. The argument in \cite[\S6]{beyond} shows that for a generic ``cobordism-admissible'' almost complex structure $J$ on the ``completion'' of $W$, there exists an embedded $J$-holomorphic curve $u$ with one positive end asymptotic to the Reeb orbit $\gamma_+$ in $\partial Z'$, negative ends asymptotic to some Reeb orbits $\gamma_1,\ldots,\gamma_m$ in $\partial X'$, and Fredholm index $\op{ind}(u)=0$. The Fredholm index is computed by the formula
\begin{equation}
\label{eqn:fredholmindex}
\op{ind}(u) = 2g + \left[\op{CZ}(\gamma_+)-1\right] - \sum_{i=1}^m\left[\op{CZ}(\gamma_i)-1\right]
\end{equation}
where $g$ denotes the genus of $u$. Furthermore, since $J$-holomorphic curves decrease symplectic action, we have
\begin{equation}
\label{eqn:decreaseaction}
\mc{A}(\gamma_+) \ge \sum_{i=1}^m\mc{A}(\gamma_i).
\end{equation}

We claim now that at least one of the Reeb orbits $\gamma_i$ has action at least $\min(M_1,M_2)-\epsilon$. Then the inequality \eqref{eqn:decreaseaction} gives
\[
R+\epsilon \ge \min(M_1,M_2) - \epsilon,
\]
and since $\epsilon>0$ was arbitrarily small, we are done.

To prove the above claim, suppose to the contrary that all of the Reeb orbits $\gamma_i$ have action less than $\min(M_1,M_2)-\epsilon$. Then all of the Reeb orbits $\gamma_i$ have rotation number $\rho(\gamma_i)<1$, which means that they all have Conley-Zehnder index $\op{CZ}(\gamma_i)\le 1$. It now follows from \eqref{eqn:fredholmindex} that $\op{ind}(u)\ge 2$, which is a contradiction\footnote{One way to think about the information that we are getting out of \eqref{eqn:fredholmindex}, as well as the general symplectic embedding obstruction in \cite{beyond}, is that we are making essential use of the fact that every holomorphic curve has nonnegative genus.}.
\end{proof}

\section{The first Ekeland-Hofer capacity}
\label{sec:eh}

The goal of this section is to (re)prove the following theorem. This is well-known in the community and is attributed to Ekeland, Hofer and Zehnder \cite{EH2, HZ87}. It was first mentioned by Viterbo in \cite[Proposition 3.10]{viterbocapacite}.

\begin{theorem}[Ekeland-Hofer-Zehnder]
\label{thm:ehz}
Let $W\subset\R^{2n}$ be a compact convex domain with smooth boundary. Then
\[
c_1^{\op{EH}}(W) = A_{\min}(W).
\]
\end{theorem}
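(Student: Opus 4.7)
The plan is to establish both inequalities $c_1^{\op{EH}}(W) \ge A_{\min}(W)$ and $c_1^{\op{EH}}(W) \le A_{\min}(W)$ directly from the variational definition, following the same general pattern that Hofer--Zehnder used to prove $c_{\op{HZ}}(W) = A_{\min}(W)$ (which is already contained in part (b) of Theorem~\ref{thm:equivalence}). Recall that $c_1^{\op{EH}}$ is constructed from a minimax of the symplectic action functional
\[
\Phi_H(x) = \tfrac12\|x^+\|^2 - \tfrac12\|x^-\|^2 - \int_0^1 H(x(t))\,dt
\]
on the Hilbert space $E = H^{1/2}(S^1;\R^{2n}) = E^+ \oplus E^0 \oplus E^-$, taken over Hamiltonians $H$ adapted to $W$ (vanishing on a neighborhood of $W$, quadratic at infinity with slope outside $\pi\Z$) and over a class $\Gamma$ of subsets of $E$ that link $E^-$ through the origin. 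The first task of the section is therefore to recall this definition carefully.

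For the lower bound $c_1^{\op{EH}}(W) \ge A_{\min}(W)$, the plan is to appeal to the standard $S^1$-equivariant linking/Palais--Smale machinery to conclude that the minimax value is a positive critical value of $\Phi_H$. Critical points of $\Phi_H$ are 1-periodic orbits of the Hamiltonian vector field of $H$, and nonconstant orbits with positive action lie on level sets of $H$ inside the region where $H$ is 2-homogeneous; after reparametrization these become closed characteristics on $\partial W$ with unchanged action. Passing to the infimum over admissible $H$ and using that the action spectrum of the smooth compact hypersurface $\partial W$ is closed then places $c_1^{\op{EH}}(W)$ in the action spectrum, so $c_1^{\op{EH}}(W) \ge A_{\min}(W)$.

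For the upper bound $c_1^{\op{EH}}(W) \le A_{\min}(W)$, the plan is to exhibit an explicit element of $\Gamma$ on which $\sup \Phi_H = T := A_{\min}(W)$. Let $\gamma$ be a minimal closed characteristic on $\partial W$. Viewed in the variational setup with $H = \alpha\,j_W^2$ (where $j_W$ is the Minkowski gauge of $W$, extended to be $2$-homogeneous), $\gamma$ corresponds to a critical loop in $E^+ \oplus E^0$ of action $T$. Following Ekeland--Hofer, one builds a finite-dimensional family consisting of positive scalar multiples of $\gamma$ in $E^+$ translated by arbitrary $z \in E^0 = \R^{2n}$ and capped off at a sufficiently large radius; this family links $E^-$ at the origin by a standard degree/linking computation, and a direct calculation shows that $\Phi_H$ attains its supremum over the family exactly at the critical rescaling, with value $T$. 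Taking the infimum over admissible $H$ then yields $c_1^{\op{EH}}(W) \le T$.

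The main obstacle is the upper bound, specifically verifying that $\sup \Phi_H$ on the explicit linking family equals exactly $T$ rather than some larger value. This is where the convexity of $W$ is essential: it guarantees that the Hessian of $j_W^2$ is positive semidefinite, which forces the critical rescaling of $\gamma$ to be a genuine maximum along the scaling direction. A mild approximation step reduces the smooth convex case to the smooth strictly convex case, in which minimal closed characteristics are isolated and nondegenerate so the linking family can be constructed unambiguously; continuity of $c_1^{\op{EH}}$ and $A_{\min}$ under $C^2$-small perturbations of the boundary then transfers the equality back to the original $W$.
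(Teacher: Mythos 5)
Your overall architecture (lower bound by identifying the minimax value as the action of a closed characteristic on $\partial W$, upper bound by exhibiting an explicit test set for an adapted Hamiltonian) matches the paper's, and you correctly identify convexity as the ingredient controlling the supremum of the action functional over the test set. The lower bound is fine as a sketch: the paper simply cites \cite[Prop.~2]{EH2}, which places $c_1^{\op{EH}}(W)$ in the action spectrum of $\partial W$.

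The gap is in the upper bound, in two places. First, your test family --- positive multiples of $P^+\gamma$ translated by $E^0$ and ``capped off at a sufficiently large radius'' --- is finite-dimensional and contains no part of $E^-$. The admissibility condition in the definition of $c_{H,1}$ is not that $\xi$ links $E^-$ at the origin; it is that $h(\xi)\cap S^+\neq\emptyset$ for \emph{every} $h$ in the Ekeland--Hofer class $\Gamma$, and the sets for which this intersection property is established in \cite{EH} are of the form $E^-\oplus E^0\oplus[0,\infty)e$ with $e\in E^+\setminus\{0\}$. A bounded finite-dimensional family cannot be substituted here without a new argument (the maps in $\Gamma$ involve compact perturbations $K$ and dilations of the $\pm$ components, and the degree argument behind the intersection lemma uses the full negative subspace), so one must take $\xi=[0,\infty)\cdot P^+x_0\oplus E^0\oplus E^-$ as the paper does --- and then the bound $\Ac_H\le A_{\min}(W)+\epsilon$ must be verified on all of $\xi$, not just along the scaling direction. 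This is exactly where convexity enters: $\Psi(x)=A(x)-\alpha\int_0^1 r(x(t))\,dt$ is concave on each affine slice $sP^+x_0+E^0\oplus E^-$, with critical point $sx_0$ of value $0$. Second, $H=\alpha\,j_W^2$ is not an admissible Hamiltonian (it does not vanish on $W$ and is not of the form $c|z|^2$ at infinity), so computing $\sup\Phi_H$ for it bounds none of the numbers $c_{H,1}$ entering the infimum; one needs the cut-off construction $H=f(r)$ with $f$ convex, vanishing on $[0,1]$ and satisfying $f(r)\ge Cr-(\alpha+\epsilon)$, which converts the estimate $\Psi\le 0$ on $\xi$ into $\Ac_H\le\alpha+\epsilon$. (Also, your reduction to strictly convex domains with isolated nondegenerate minimal characteristics is both unnecessary --- the argument needs only one action-minimizing orbit --- and not implied by strict convexity, as the round ball shows.)
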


We start by recalling the definition of the first Ekeland-Hofer capacity $c_1^{\op{EH}}$. Let $E=H^{1/2}(S^1,\R^{2n})$. That is, if $x\in L^2(S^1,\R^{2n})$ is written as a Fourier series $x=\sum_{k\in\Z} e^{2\pi ikt} x_k$ where $x_k\in\R^{2n}$, then 
\[
x\in E\iff \sum_{k\in\Z} |k||x_k|^2<\infty.
\]
Recall that there is an orthogonal splitting $E=E^+\oplus E^0\oplus E^-$ and orthogonal projections $P^\circ:E\to E^\circ$ where $\circ=+,0,-$. The symplectic action of $x\in E$ is defined to be
\[
A(x)=\frac{1}{2}\left(\Vert P^+ x\Vert_{H^{1/2}}^2-\Vert P^- x\Vert_{H^{1/2}}^2\right).
\]
It follows from a simple calculation that if $x$ is smooth, then $A(x)=\int_x \lambda_0$, where $\lambda_0$ denotes the standard Liouville form on $\R^{2n}$.

Let $\Hi$ denote the set of $H\in C^{\infty}(\R^{2n})$ such that
\begin{itemize}
\item $H|_U\equiv 0$ for some $U\subset\R^{2n}$ open,
\item $H(z)=c|z|^2$ for $z>>0$ where $c\not\in \{\pi,2\pi,3\pi,\dots\}$.
\end{itemize}
For $H\in\Hi$, the action functional $\Ac_H:H^{1/2}(S^1,\R^{2n})\to\R$ is defined by
\begin{equation}\label{eq:ach}\Ac_H(x)=A(x)-\int_0^1 H(x(t))dt.\end{equation}
Note that the natural action of $S^1$ on itself induces an $S^1$-action on $E$. Let $\Gamma$ be the set of homeomorphisms $h:E\to E$ such that $h$ can be written as
\[h(x)=e^{\gamma_+(x)}P^+x+P^0x+e^{\gamma_-(x)}P^-x+K(x),\]
where $\gamma_+,\gamma_-:E\to \R$ are continuous, $S^1$-invariant and map bounded sets to bounded sets, and $K:E\to E$ is continuous, $S^1$-equivariant and maps bounded sets to precompact sets.
Let $S^+$ denote the unit sphere in $E^+$ with respect to the $H^{1/2}$ norm. The first Ekeland-Hofer capacity is defined in \cite{EH2} by
\[
c_{1}^{\op{EH}}(W)=\inf\{c_{H,1}\mid H\in\Hi,W\subset\supp H\},
\]
where
\[
c_{H,1}=\inf\{\sup \Ac_H(\xi)\mid \xi\subset E\text{ is $S^1$-invariant, and }\forall h\in\Gamma:h(\xi)\cap S^+\neq \emptyset\}.
\]

\begin{proof}[Proof of Theorem \ref{thm:ehz}.]
Since $W$ is star-shaped, there is a unique differentiable function $r:\R^{2n}\to \R$ which is $C^\infty$ in $\R^{2n}\setminus\{0\}$ satisfying $r(cz)=c^2r(z)$ for $c\ge 0$ such that
\[
\begin{aligned}W&=\{z\in\R^{2n}\mid r(z)\le 1\},\\
\partial W&= \{z\in\R^{2n}\mid r(z)= 1\}.
\end{aligned}
\]
Let $\alpha=A_{\min}(W)$ and fix $\epsilon>0$. Let $f\in C^{\infty}_{\ge 0} (\R)$ be a convex function such that $f(r)=0$ for $r\le 1$ and $f(r)=Cr-(\alpha+\epsilon)$ for $r\ge 2$ for some constant $C>\alpha$. In particular,
\begin{equation}\label{eq:f}
f(r)\ge Cr-(\alpha+\epsilon),\quad\text{for all } r.\end{equation}
We now choose a convex function $H\in C^\infty(\R^{2n})$ such that
\begin{equation}
\begin{array}{rcll}\label{eq:hr}
H(z)&=&f(r(z)),&\text{ if }r(z)\le 2,\\
H(z)&\ge&f(r(z)),&\text{ for all }z\in\R^{2n},\\
H(z)&=&c\,|z|^2,&\text{ if }z>>0\text{ for some }c\in\R_{>0}\setminus\pi\Z.
\end{array}
\end{equation}
Let $x_0\in E$ be an action-minimizing Reeb orbit on $\partial W$, reparametrized as a map $x_0:\R/\Z=S^1\to\R^{2n}$ of speed $\alpha$, so that $A(x_0)=\alpha$ and $r(x_0)\equiv 1$ and $\dot{x}_0=\alpha J\nabla r(x_0)$. From a simple calculation we deduce that $x_0$ is a critical point of the functional $\Psi:E\to\R$ defined by
\begin{equation}
\label{eq:psi}
\Psi(x)=A(x)-\alpha\int_0^1 r(x(t))\,dt.
\end{equation}
Observe that $\Psi(cx)=c^2\Psi(x)$ for $c\ge 0$. So $sx_0$ is a critical point of $\Psi$ for all $s\ge 0$. Let $\xi=[0,\infty)\cdot P^+ x_0\oplus E^0\oplus E^-$. 

We now claim that $\Psi(x)\le 0$ for all $x\in\xi$. To prove this, let $\xi_s=s P^+ x_0\oplus E^0\oplus E^-$. Observe that $\Psi|_{\xi_s}$ is a concave function. Since $sx_0$ is a critical point of $\Psi|_{\xi_s}$ it follows that $\max \Psi(\xi_s)=\Psi(sx_0)=s^2\Psi(x_0)=0$.

From \eqref{eq:ach}, \eqref{eq:f}, \eqref{eq:hr} and \eqref{eq:psi} we obtain
\[
\Ac_H(x)\le \Psi(x)+\alpha+\epsilon+(C-\alpha)\int_0^1 r(x(t))\,dt\le\alpha+\epsilon.
\]
Note that $\xi$ is $S^1$-invariant. Moreover it is proven in \cite{EH} that $h(\xi)\cap S^+\neq \emptyset$ for all $h\in\Gamma$. So $c_{H,1}\le\alpha+\epsilon$. Hence $c_1^{\op{EH}}(W)\le \alpha+\epsilon$ for all $\epsilon>0$. Therefore
\[
c_1^{\op{EH}}(W)\le \alpha.
\]

To prove the reverse inequality, recall from \cite[Prop.\ 2]{EH2} that $c_1^{\op{EH}}(W)$ is the symplectic action of some Reeb orbit on $\partial W$. Thus
\[
c_1^{\op{EH}}(W)\ge \alpha.
\]
\end{proof}


\begin{thebibliography}{99}

\bibitem{ABHS} A. Abbondandolo, B. Bramham, U. Hryniewicz, and P. Salam\~ao, {\em Systolic ratio, index of closed orbits and convexity for tight contact forms on the three-sphere\/}, Compos. Math. {\bf 154} (2018), 2643--2680.

\bibitem{AK} A. Abbondandolo and J. Kang, {\em Symplectic homology of convex domains and Clarke's duality\/}, arXiv:1907.07779.

\bibitem{AAO} S. Artstein-Avidan and Y. Ostrover, {\em Bounds for Minkowski billiard trajectories in convex bodies\/}, IMRN {\bf 2014}, 165--193.

\bibitem{AAKO} S. Artstein-Avidan, R. Karasev, and Y. Ostrover, {\em From symplectic measurements to the Mahler conjecture\/}, Duke Math. J. {\bf 163} (2014), 2003--2022.

\bibitem{concave} K. Choi, D. Cristofaro-Gardiner, D. Frenkel, M. Hutchings, and V.G.B. Ramos, {\em Symplectic embeddings into four-dimensional concave toric domains\/}, J. Topol. {\bf 7} (2014), 1054--1076.

\bibitem{chls} K. Cieliebak, H. Hofer, J. Latschev, and F. Schlenk, {\em Quantitative symplectic geometry\/}, in Dynamics, ergodic theory, and geometry, Math. Sci. Res. Inst. Publ. {bf 54} (2007), 1--44.

\bibitem{concaveconvex} D. Cristofaro-Gardiner, {\em Symplectic embeddings from concave toric domains into convex ones\/}, J. Diff. Geom. {\bf 112} (2019), 199--232.

\bibitem{EH} I. Ekeland and H. Hofer, {\em Symplectic topology and Hamiltonian dynamics\/}, Math. Z. {\bf 200} (1989), 355--378.

\bibitem{EH2} I. Ekeland and H. Hofer, {\em Symplectic topology and Hamiltonian dynamics II}, Math. Z. {\bf 203} (1990), 553-567.

\bibitem{GS} V. Ginzburg and J. Shon, {\em On the filtered symplectic homology of prequantization bundles\/}, Int. J. Math. {\bf 29} (2018), 1850071, 35pp.

\bibitem{Gromov} M. Gromov, {\em Pseudoholomorphic curves in symplectic manifolds\/}, Invent. Math. {\bf 82} (1985), 307--347.

\bibitem{GuH} J. Gutt and M. Hutchings, {\em Symplectic capacities from positive $S^1$-equivariant symplectic homology\/}, Algebr. Geom. Topol. {\bf 18} (2018), 3537--3600.

\bibitem{Hermann} D. Hermann, {\em Non-equivalence of symplectic capacities for open sets with restricted contact type boundary\/}, preprint, \verb=www.math.u-psud.fr/~biblio/pub/1998/abs/ppo1998_32.html=.

\bibitem{HWZ} H. Hofer, K. Wysocki, and E. Zehnder, {\em The dynamics on three-dimensional strictly convex energy surfaces\/}, Ann. Math. {\bf 148} (1998), 197--289.

\bibitem{HZ87} H. Hofer and E. Zehnder, {\em Periodic solutions on hypersurfaces and a result by C. Viterbo\/}, Invent. Math. {\bf 90} (1987), 1--9.

\bibitem{HZ} H. Hofer and E. Zehnder, {\em A new capacity for symplectic manifolds\/}, Analysis, et cetera, 405--427, Academic Press, Boston MA (1990).

\bibitem{qech} M. Hutchings, {\em Quantitative embedded contact homology\/}, J. Diff. Geom. {\bf 88} (2011), 321--266.

\bibitem{beyond} M. Hutchings, {\em Beyond ECH capacities\/}, Geom. Topol. {\bf 20} (2016), 1085--1126.

\bibitem{ruelle} M. Hutchings, {\em ECH capacities and the Ruelle invariant\/}, arXiv:1910.08260.

\bibitem{Irie} K. Irie, {\em Symplectic homology of fiberwise convex sets and homology of loop spaces\/}, arXiv:1907.09749.

\bibitem{LMS} J. Latschev, D. McDuff, and F. Schlenk, {\em The Gromov width of 4-dimensional tori\/}, Geom. Topol. {\bf 17} (2013), 2813--1853.

\bibitem{Mahler} K. Mahler, {\em Ein \"Ubertragungsprinzip f\"ur konvexe K\"orper\/}, Cas. Mat. Fys. {\bf 68} (1939), 93--102.

\bibitem{bidisk} V.G.B. Ramos, {\em Symplectic embeddings and the Lagrangian bidisk\/}, Duke Math. J. {\bf 166} (2017), 1703--1738.

\bibitem{folding} F. Schlenk, {\em On symplectic folding\/}, arXiv:math/9903086.

\bibitem{schlenk} F. Schlenk, {\em Symplectic embedding problems, old and new\/}, Bull. Amer. Math. Soc. {\bf 55} (2017), 139--182.

\bibitem{shilu} Kun Shi and Guangcun Lu, {\em Some cases of the Viterbo conjecture and the Mahler one\/}, arXiv:2008.04000.

\bibitem{siegel1} K. Siegel, {\em Higher symplectic capacities\/}, arXiv:1902.01490.

\bibitem{siegel2} K. Siegel, {\em Computing higher symplectic capacities I\/}, arXiv:1911.06466.

\bibitem{traynor} L. Traynor, {\em Symplectic packing constructions\/}, J. Diff. Geom. {\bf 42} (1995), 411--429.

\bibitem{viterbocapacite} C. Viterbo, {\em Capacit\'es symplectiques et applications (d'apr\`es Ekeland-Hofer, Gromov)\/}, S\'eminaire Bourbaki, volume 1988/89, Ast\'erisque No. 177-178 (1989), 345--362.

\bibitem{V} C. Viterbo, {\em Functors and computations in Floer homology with applications I\/} Geom. Funct. Anal. {\bf 9} (1999), 985--1033.

\bibitem{viterbo-conj} C. Viterbo, {\em Metric and isoperimetric problems in symplectic geometry\/}, J. Amer. Math. Soc. {\bf 13} (2000), 411--431.

\end{thebibliography}
\end{document}